\def\A{\mathbb A}
\def\C{\mathbb C}
\def\F{\mathbb F}
\def\L{\mathbb L}
\def\bL{\mathbb L}
\def\P{\mathbb P}
\def\bP{\mathbb P}
\def\R{\mathbb R}
\def\bQ{\mathbb Q}
\def\Q{\mathbb Q}
\def\Z{\mathbb Z}
\def\Gset{\mathbf{Gset}}
\def\EtSch{\mathbf{EtSch}}
\def\CC{\mathcal C}
\def\HH{\mathcal H}
\def\MM{\mathcal M}
\def\cM{\mathcal M}
\def\OO{\mathcal O}
\def\chr{{\operatorname{char}}}
\def\Ker{{\operatorname{Ker}}}
\def\Spec{{\operatorname{Spec}}}
\def\NS{{\operatorname{NS}}}
\def\Bl{{\operatorname{Bl}}}
\def\bl{{\operatorname{bl}}}
\def\Aut{{\operatorname{Aut}}}
\def\rk{{\operatorname{rk}}}
\def\chr{{\operatorname{char}}}
\def\Bir{{\operatorname{Bir}}}
\def\Cr{{\operatorname{Cr}}}
\def\Rep{{\operatorname{Rep}}}
\def\Burn{{\operatorname{Burn}}}
\def\Gal{{\operatorname{Gal}}}
\def\wt{\widetilde}
\def\ol{\overline}
\def\Kt{{\mathrm{K}}}
\def\GL{{\mathrm{GL}}}
\def\et{{\text{\'et}}}
\def\Var{{\mathrm{Var}}}
\theoremstyle{plain}
\newtheorem{dummy}{dummy}[section]
\newtheorem{theorem}[dummy]{Theorem}
\newtheorem{proposition}[dummy]{Proposition}
\newtheorem{lemma}[dummy]{Lemma}
\newtheorem{lem}[dummy]{Lemma}
\newtheorem{lemdef}[dummy]{Lemma-Definition}
\newtheorem{corollary}[dummy]{Corollary}
\newtheorem{cor}[dummy]{Corollary}
\newtheorem{claim}[dummy]{Claim}
\newtheorem{example}[dummy]{Example}
\newtheorem{definition}[dummy]{Definition}
\newtheorem{remark}[dummy]{Remark}
\newtheorem{question}[dummy]{Question}
\numberwithin{equation}{section}
\newcommand{\ti}[1]{\tilde{#1}}
\newcommand{\vast}{\bBigg@{4}}
\newcommand{\Vast}{\bBigg@{5}}
\newcommand{\ga}{\alpha}
\newcommand{\gb}{\beta}
\newcommand{\gd}{\delta}
\newcommand{\go}{\omega}
\newcommand{\dto}{\dashrightarrow}
\newcommand{\lto}{\leftarrow}
\newcommand{\hto}{\hookrightarrow}
\newcommand{\xto}[1]{\xrightarrow{ #1 }}
\newcommand{\xlto}[1]{\xleftarrow{ #1 }}
\newcommand{\cnec}{\mathrel{:=}}
\title[Factorization centers in dimension two]
{Factorization centers in dimension two
and the Grothendieck ring of varieties}
\keywords{Grothendieck ring, Birational automorphism,
Algebraic Surface, Factorization center, 
Sarkisov link}
\subjclass[2010]{14E07, 14E30, 14J26  14F20}
\author{Hsueh-Yung Lin, Evgeny Shinder, Susanna Zimmermann}
	\address{Department of Mathematics, National Taiwan University, 
	and National Center for Theoretical Sciences,
	Taipei, Taiwan.}
	\email{hsuehyunglin@ntu.edu.tw}
\address{
School of Mathematics and Statistics, University of Sheffield,
Hounsfield Road, S3 7RH, UK, and
Hausdorff Center for Mathematics
at the University of Bonn, Endenicher Allee 60, 53115.
}
\email{eugene.shinder@gmail.com}
\address{Laboratoire angevin de recherche en math\'ematiques (LAREMA), CNRS, Universit\'e d'Angers, 49045 Angers cedex 1, France}
\email{susanna.zimmermann@universite-paris-saclay.fr}
\begin{document}

\raggedbottom

\maketitle

\begin{abstract}
We initiate the study of factorization centers of birational maps, and complete it for surfaces over a perfect field in this article.
We prove that 
for every birational automorphism $\phi : X \dashrightarrow X$ of a smooth projective surface $X$ over a perfect field $k$,
the blowup centers are isomorphic to the blowdown centers 
in every weak factorization of $\phi$. 
This implies that 
nontrivial L-equivalences of $0$-dimensional varieties 
cannot be constructed 
based on birational automorphisms of a surface. 
It also implies that
rationality centers are well-defined for every rational surface $X$,
namely there exists a $0$-dimensional variety intrinsic to $X$, 
which is blown up in any rationality construction of $X$.\\
\end{abstract}

\tableofcontents 

\section{Introduction}

One source of motivation in birational geometry comes from studying groups of
birational automorphisms of algebraic varieties, in particular
the Cremona groups $\Cr_n(k) = \Bir(\P^n_k)$.
Each birational automorphism blows up
some subschemes
and contracts some 
exceptional divisors.
The primary question we study in this paper is:
\begin{question}\label{q:main}
Let $\phi: X \dashrightarrow X$
be a birational automorphism of a smooth projective variety.
Do centers blown up by $\phi$
correspond, up to stable birational equivalence,
to 
the exceptional divisors blown down by $\phi$?
\end{question}

We note that exceptional divisors are ruled over the corresponding blow up centers, so 
asking about stable
birational equivalence classes 
is a natural way to compare
exceptional divisors
with the blow up centers.
We give a complete answer to 
a stronger version of Question \ref{q:main} for surfaces over an arbitrary perfect field,
see Theorem \ref{thm:main-intro} below.
Our proof is an application
of the 
two-dimensional Minimal Model Program
\cite{IskovskikhMinModel, IskovskikhFact}
combined with the Grothendieck ring
of varieties,
\'etale cohomology groups 
and the so-called Gassmann equivalence of Galois sets.

There are three main reasons to study Question \ref{q:main}.
First of all, it is the structure of birational automorphisms and
Cremona groups, in particular
their generation by involutions or regularizable elements.
In the recent paper \cite{BirMot},
we explain that
the answer to Question
\ref{q:main}
is negative in various contexts
in dimension $n \ge 3$,
and give applications to
the structure
of the higher Cremona groups.
Secondly, Question \ref{q:main} has a tight relationship
to the structure of
the Grothendieck ring of varieties.

\begin{question}\label{q:groth}
\hfill

\begin{enumerate}
    \item[(a)] (Larsen-Lunts \cite{LarsenLunts}, slightly reformulated)\label{q:Larsen-Lunts}
If classes of smooth projective
varieties $[X]$ and $[Y]$
coincide in the Grothendieck ring, 
how can we compare the geometry of $X$ and $Y$?
For instance,
are $X$ and $Y$ birational?

\item[(b)] \cite{KuznetsovShinder}
What is the geometric meaning
of L-equivalence?
For instance,
if zero-dimensional schemes
are L-equivalent,
do they have to be isomorphic?

\item[(c)] \cite{MO-Dominik}
Does the Grothendieck ring of varieties $K_0(\Var/k)$
have torsion elements?

\end{enumerate}
\end{question}

In the direction of (a),
the main result of \cite{LarsenLunts},
which also follows from \cite{Bittner},
is that if $\chr(k) = 0$, equality
of classes $[X]$, $[Y]$ of smooth
projective connected varieties 
implies that $X$ and $Y$ are \emph{stably} 
birational.
On the other hand,
for non-projective smooth connected
varieties, the second part of
Question \ref{q:Larsen-Lunts}.(a)
is known to have a negative
answer (the first such example is~\cite[proof of Theorem 2.13]{Borisov}).
For (b), 
see \cite{KuznetsovShinder}
for conjectural 
relations to derived equivalence.
In \S \ref{subsec:L-eq}, we explain
that L-equivalence of smooth zero-dimensional
schemes implies Gassmann equivalence of the corresponding
Galois sets, but this does not rule out the possibility
of nontrivial L-equivalence between such schemes.
Nothing is known about (c).
As the Grothendieck ring is a colimit of the truncated
groups $K_0(\Var^{\le n}/k)$ generated by varieties of dimension
up to $n$, we can ask each of the questions in these
truncated groups.
As a direct consequence of our
positive answer to Question \ref{q:main}
in dimension two,
we are able to answer
Question \ref{q:groth} completely for $K_0(\Var^{\le 2}/k)$;
see Corollary \ref{cor:main}. Namely,
 (a) 
 equality of classes of smooth projective varieties
in $K_0(\Var^{\le 2}/k)$ 
implies birationality,
(b) L-equivalence in $K_0(\Var^{\le 2}/k)$ is trivial and 
(c) $K_0(\Var^{\le 2}/k)$ is a free abelian group. 
We expect
that studying
Question \ref{q:main}
in dimensions $\le n$ would lead to
good control over Question \ref{q:groth}
for $K_0(\Var^{\le n}/k)$.

Finally, answering Question \ref{q:main}
positively, or explaining all ways in which it can fail,
allows to control all rationality constructions
for every rational variety, see
the discussion of rationality centers
below and
\S \ref{subsec:rat-cent} for more
details.

\medskip

We now explain our answer to Question \ref{q:main} in dimension $2$
over perfect fields.
Our main result
can be stated in the following way.

\begin{theorem}[see Theorem \ref{thm:main}]
\label{thm:main-intro}
Let $k$ be a perfect field.
Let $X/k$ be a smooth projective surface and let $\phi \in \Bir(X)$
be a birational automorphism.
For any factorization of $\phi$
into a sequence of blow ups and blow downs 
at
connected smooth zero-dimensional subschemes,
let $Z_1, \dots, Z_r$ (resp. $Z_1', \dots, Z_{r'}'$) be the 
centers 
which get
blown up (resp. blown down). 
Then $r = r'$ and there is a 
reordering
under which $Z_i \simeq Z_i'$ over $k$ for all $i = 1, \dots, r$. 
\end{theorem}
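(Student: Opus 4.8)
The plan is to extract from any factorization of $\phi$ a collection of discrete invariants that (i) is manifestly unchanged by the factorization (because it only depends on the birational map $\phi$, or even just on $X$), and (ii) is strong enough to pin down the multiset $\{Z_1,\dots,Z_r\}$ up to isomorphism over $k$ together with the equality $r=r'$. Since each elementary blowup at a connected smooth $0$-dimensional $Z_i$ changes $[X]$ in $K_0(\Var/k)$ by $[\P^1_k]\cdot[Z_i] - [Z_i] = \L\cdot[Z_i]$ and changes, say, $H^2_{\et}$ by adding a copy of the Galois module $\Z[\ol{Z_i}]$ (the permutation module on the geometric points of $Z_i$), the composite $\phi$ being an automorphism of $X$ forces the telescoping relations $\sum_i \L[Z_i] = \sum_j \L[Z_j']$ in $K_0(\Var/k)$ and $\bigoplus_i \Z[\ol{Z_i}] \simeq \bigoplus_j \Z[\ol{Z_j'}]$ as Galois modules, i.e.\ $\coprod_i \ol{Z_i}$ and $\coprod_j \ol{Z_j'}$ are Gassmann equivalent. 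Both identities are intrinsic to the cobordism between the two sides and so are factorization-independent; in particular summing $\dim_k H^0$ over the blown-up centers is forced to equal the same sum over the blown-down centers, which will give $r=r'$ once we know the two $0$-dimensional varieties are actually isomorphic.

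The first real step is therefore to upgrade "$\L[Z] = \L[Z']$ in $K_0(\Var/k)$, for $Z,Z'$ zero-dimensional" to "$[Z]=[Z']$", and then to "$Z \simeq Z'$". Cancellation of $\L$ is not free, but in the truncated ring $K_0(\Var^{\le 1}/k)$ — which is where these classes live — the structure is simple enough: a class in $K_0(\Var^{\le 2}/k)$ that is divisible by $\L$ and supported in dimension $\le 1$ must come from dimension $0$, and one checks (this is presumably done in the earlier sections establishing Corollary~\ref{cor:main}, parts (b)--(c)) that $K_0(\Var^{\le 1}/k)$ is freely generated by classes of points $[\Spec E]$ for finite field extensions $E/k$ together with $[\P^1_k]$, with no $\L$-torsion, so $\L[Z] = \L[Z']$ gives $[Z]=[Z']$ and hence (reading off coefficients in this free basis) $Z \simeq Z'$ as $k$-schemes. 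This handles the case where all the $Z_i$ and $Z_j'$ happen to be geometrically connected, or more generally lets us recover the geometrically-split part.

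The genuine obstacle is the non-split part: two non-isomorphic $0$-dimensional $k$-varieties can be Gassmann equivalent, and $K_0(\Var/k)$ a priori only sees their classes, which — being determined by the Galois set up to Gassmann equivalence — may coincide even when the schemes differ. So the Grothendieck-ring and cohomological invariants alone cannot separate a Gassmann pair. This is exactly where the $2$-dimensional Minimal Model Program has to be brought in: one runs the MMP on the surface and on the resolved graph of $\phi$ to constrain \emph{which} field extensions can actually occur as residue fields of blown-up points — the centers are not arbitrary finite $k$-schemes but points on a minimal rational (or ruled, or other) surface, sitting on rational curves, and their residue fields are correspondingly restricted (e.g.\ for $\P^2$ and Hirzebruch surfaces one gets strong control via the action on the Picard lattice and the classification of Sarkisov links / Iskovskikh's generators). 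Concretely, I expect the argument to proceed by: reduce via MMP to the case where $X$ is minimal; decompose $\phi$ into Sarkisov links or into Iskovskikh's explicit generators of $\Bir(X)$; track the centers link-by-link, where at each link the blown-up and blown-down centers are visibly isomorphic or visibly paired; and then assemble these local isomorphisms into a global reordering. The place where I anticipate the most care is the bookkeeping across links whose base fields or intermediate surfaces are not rational — de Jonquières-type links over conic bundles, and links between del Pezzo surfaces of low degree — where one must verify that the residue-field data of the center is genuinely preserved and not merely Gassmann-preserved; the Grothendieck-ring identity is then used as an \emph{a posteriori} consistency check and, combined with the explicit MMP analysis ruling out Gassmann-but-not-isomorphic pairs among the centers that can actually appear, closes the gap to the stated conclusion $Z_i \simeq Z_i'$ over $k$.
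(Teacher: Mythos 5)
Your telescoping identity $\sum_i \L[Z_i] = \sum_j \L[Z_j']$ in $K_0(\Var/k)$ is correct, but the step where you "upgrade" it to $[Z]=[Z']$ and then to $Z\simeq Z'$ by appealing to the structure of $K_0(\Var^{\le 1}/k)$ and ``the earlier sections establishing Corollary~\ref{cor:main}'' is circular: Corollary~\ref{cor:main} is a \emph{consequence} of Theorem~\ref{thm:main}, not an input to it, and cancelling $\L$ against a $0$-dimensional class is precisely the L-equivalence problem that the theorem itself is needed to resolve (see Remark~\ref{rem:L-eq}). The map $\Z[\Var^0/k]\to K_0(\Var/k)$ splits in characteristic zero (via Bittner), so one can recover $[Z]$ in $\Z[\Var^0/k]$ from $[Z]\in K_0(\Var/k)$, but one cannot get from $\L[Z]=\L[Z']$ to $[Z]=[Z']$ without already having the theorem. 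The paper sidesteps this entirely: $c(\phi)$ is defined directly in $\Z[\Var^0/k]$ from the exceptional divisors of $\phi$ and $\phi^{-1}$ (Lemma~\ref{lem:c-add}), so well-definedness is a geometric cancellation on the resolution, with no Grothendieck-ring division needed; the étale realization and Gassmann equivalence are then used as \emph{constraints} on the value of $c(\phi)$, not to define it.

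Your second half correctly identifies that the genuine content is the MMP/Sarkisov analysis, and correctly anticipates that the danger lies in Gassmann-equivalent but non-isomorphic centers. But you stop short of the two devices that actually close this gap in the paper, and without them the argument does not go through. First, for a link between minimal models one cannot expect "the blown-up and blown-down centers are visibly isomorphic" --- for asymmetric links such as $8\leftarrow3\to5$ or $9\leftarrow4\to5$ they are manifestly different, and what the paper proves instead is a relative statement $c(\nu)=A_{X'}-A_X$ where $A_X$ is a \emph{virtual N\'eron-Severi Galois set} built from Hilbert schemes of low-degree rational curves on $X$ (Definition~\ref{def:MX}, Proposition~\ref{prop:hilbert-scheme}). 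Second, the reason one is safe against the Gassmann phenomenon at each link is quantitative: every link involving a surface of degree $\ge 5$ keeps the relevant Galois sets of order $\le 5$, where Gassmann equivalence implies isomorphism (Proposition~\ref{prop:Gassmann}, Corollary~\ref{cor:Gassmann}); this is how the étale-cohomology identity $\mu_{G_k}(A_X)+[\Q_\ell[Z]]=\mu_{G_k}(A_{X'})+[\Q_\ell[Z']]$ (Lemma~\ref{lem:mu=}) is turned into an honest isomorphism of $G_k$-sets. So the proposal has the right skeleton but is missing the $A_X$ bookkeeping, the small-order Gassmann rigidity that makes the cohomological identity bite, and the explicit check against Iskovskikh's table of links; without these the "a posteriori consistency check" you envisage cannot rule out the Gassmann pairs.
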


Note that Theorem~\ref{thm:main-intro} 
is easily seen to hold
when $k$ is algebraically closed field, as the Galois actions
are trivial and the number
of points blown up 
is equal to the number of exceptional
divisors contracted. 
Over an arbitrary perfect field, 
Theorem~\ref{thm:main-intro} is a non-trivial statement. 
In particular, by this we mean that
it is not possible
to recover centers of the blow ups 
simply
from the Galois action
on the cohomology of the surface;
see Remark \ref{rem:GCM} for the technical
formulation of this statement in terms of 
Chow motives, and Example \ref{ex:ex-cubics} for an explicit 
construction.
Neither there seems
to exist a straightforward geometric
argument:
see Example~\ref{ex:dP5-links} for an 
illustration of the birational geometry involved.

To put Theorem \ref{thm:main-intro} into an appropriate context, we
introduce a general
invariant $c(\phi)$, keeping track of
the \emph{factorization centers} of the birational map $\phi$,
which is a homomorphism
from the groupoid of birational types of surfaces to a 
free abelian group generated by reduced $k$-schemes of dimension 0;
see Corollary \ref{cor:c-axioms}
for an axiomatic definition of $c(\phi$).
We then show that $c$ is constant
on each $\Bir(X,Y)$ and as a consequence
$c(\phi) = 0$ for every self-map, which implies
Theorem \ref{thm:main-intro}.

To prove 
that $c$ is constant
on $\Bir(X,Y)$,
we have to consider each birational type of 
surfaces that can occur,
with geometrically rational (and especially, rational)
surfaces being the most interesting ones.
For geometrically rational surfaces, 
by the two-dimensional minimal model program 
we have to consider birational maps between
del Pezzo surfaces and conic bundles.
Our proof for uniqueness of factorization
centers
uses two ingredients: \emph{Sarkisov links}
and \emph{Gassmann equivalence}. 
Sarkisov links are certain elementary
birational transformations between del Pezzo surfaces
and conic bundles which 
generate the groupoid of birational maps between 
geometrically rational surfaces. 
In dimension two, the existence of
decomposition into links
has been proved and all
the links
have been
classified by Iskovskikh into a finite
list \cite{IskovskikhFact}. The largest variety of links
occurs for what we call \emph{models
of large degree}: these include
all minimal geometrically rational
surfaces with $K_X^2 \ge 5$.

We derive Theorem \ref{thm:main-intro}
for geometrically rational surfaces
from a uniform claim we make for all links between minimal
geometrically rational surfaces (see Proposition~\ref{prop:geom-rat-5}), 
which we check for each link in Iskovskikh's classification.
The latter uniform claim is formulated in terms of the (virtual) N\'eron-Severi
Galois set, which is closely related to the N\'eron-Severi lattice of
the surface as a Galois module.
These N\'eron-Severi sets are defined
in terms of the Galois action on
linear systems of
rational curves of low degree (typically pencils
of conics and nets of twisted cubics) on del Pezzo
surfaces.

On top of the classification of links, to prove the result
we use \'etale cohomology, permutation modules, and the so-called
Gassmann triples from group theory: these are triples $(G, H, H')$
with $H$ and $H'$ subgroups of finite group $G$ such that
$\C[G/H]$ and $\C[G/H']$ are isomorphic $G$-representations (this holds if $H$, $H'$
are conjugate, but the converse if false).
Gassmann triples are used to produce
arithmetically equivalent
fields, see e.g. \cite{Perlis, Gassmannleq6},
isospectral manifolds \cite{Sunada},
as well as curves with isomorphic Jacobians \cite{PrasadGassmann}.
In our dealing with Gassmann equivalence,
we follow the approach of \cite{ParzanchevskiGsets} which
generalizes Gassmann triples
from a pair of subgroups of $G$
to a pair of $G$-sets with possibly non-transitive actions.
Our Gassmann equivalent sets come from \'etale cohomology groups and
in particular, from N\'eron-Severi groups of geometrically
rational surfaces
with their Galois group action.
We show that Gassmann equivalence provides
a cohomological expression of L-equivalence,
see Lemma \ref{lem:L-eq-Gass} and Remark~\ref{rem:GCM}.
Using the fact that Gassmann triples of small order
are trivial allows us 
to significantly limit the number of Sarkisov
links we have to consider in the proof of the main result. 

As a by-product of our results on Gassmann equivalence
we obtain conditions 
which forbid
non-trivial L-equivalence of
zero-dimensional schemes to exist;
for example L-equivalent zero-dimensional
connected reduced
schemes (resp. reduced schemes) of degree $\le 6$ (resp. $\le 5$)
are isomorphic and
L-equivalence always implies isomorphism
for fields with procyclic Galois groups
such as
$k = \R$, $k = \F_q$, see Example \ref{ex:R-Fq}.

Finally for rational surfaces, our main result has the following consequence: 
if $X$
is a smooth projective rational surface, then there exists a zero-dimensional scheme depending only on $X$, which will have to be blown up
by any birational isomorphism $\phi: \P^2 \dashrightarrow X$,
see Corollary \ref{cor:rat-centers}; we think
of these associated schemes as \emph{rationality centers}
of $X$.
This is in contrast with the higher-dimensional geometry, where
the associated rationality centers
are not well-defined, even up to stable
birational equivalence. For instance,
a K3 surface associated to a cubic fourfolds should not
be unique up to
isomorphism: 
it should be unique
up to derived equivalence \cite[Remark 27]{Hassett-rationality}, 
or possibly up to L-equivalence, as hinted
in \cite[(2.6.1)]{KuznetsovShinder}.

This text is organized as follows. \S\ref{sec:Gassmann} is devoted to Gassmann equivalence of $G$-sets.  We relate it to L-equivalence and provide sufficient conditions for two Gassmann equivalent $G$-sets to be isomorphic. In \S\ref{sec:main}, we define the invariant $c(\phi)$ which captures the factorization center of a birational map $\phi$ and formulate the main theorem (Theorem \ref{thm:main}) of the paper, which we prove in \S\ref{sec:birsurf} and \S\ref{sec:geom-rat-5}. We also study rational curves on del Pezzo surfaces in \S\ref{sec:birsurf}, which will be used to define and study the virtual Néron-Severi sets in \S\ref{sec:geom-rat-5}. 
At the end of the paper we explain the concept of
rationality center
for rational surfaces, and illustrate it in the case of del Pezzo surfaces.

\subsection*{Acknowledgements}

H.Y.L. was supported by the World Premier International Research centre Initiative (WPI), MEXT, Japan,
then by the Ministry of Education Yushan Young Scholar Fellowship (NTU-110VV006) and the National Science and Technology Council (110-2628-M-002-006-).
E.S. is supported by
EP/T019379/1 ``Derived categories and algebraic K-theory of singularities'', and by the
    ERC Synergy grant ``Modern Aspects of Geometry: Categories, Cycles and Cohomology of Hyperkähler Varieties". 
S.Z. was supported by the ANR Project FIBALGA ANR-18-CE40-0003-01 and the Project \'Etoiles montantes of the R\'egion Pays de la Loire.
E.S.
would like to thank 
Yujiro Kawamata, Keiji Oguiso,
Atsushi Takahashi 
and Shinnosuke Okawa
for supporting
his visit to Japan 
where
some of this work has originated.
E.S. would also like to thank
Ivan Cheltsov, Sergey Galkin,
Alexander Kuznetsov, 
Yuri Prokhorov,
Constantin Shramov
for discussions and encouragement, 
and Jean-Louis Colliot-Th\'el\`ene, Brendan Hassett
and  Claire Voisin for their comments on this work.
H.Y.L. would like to thank the NCTS in Taipei
for the hospitality and support during the preparation of this paper.
S.Z. would like to thank Jean-Louis Colliot-Th\'el\`ene for his comments on this work and Jean-Louis Colliot-Th\'el\`ene,  Alexander Merkurjev and Jean-Pierre Serre for indicating references to examples of integral Gassmann triples. 
The authors would like to thank Keiji Oguiso for the initial discussion he involved on this topic and for making this collaboration possible.

\subsection*{Conventions}

We work over a perfect field $k$ unless otherwise specified.
All schemes are of finite type over $k$.
By a surface we mean a connected
smooth projective
(but not necessarily geometrically irreducible)
surface $X$ over $k$.

\section{Gassmann equivalence}\label{sec:Gassmann}

We explain the basics about the Burnside
ring of a profinite 
group $G$, the Grothendieck ring
of varieties and a homomorphism between them
when
$G$ is the absolute Galois
group.
Our presentation
is similar to \cite{Rokaeus}\footnote{
Note however that \cite{Rokaeus}
makes
an erroneous
statement on p. 943 
that the homomorphism
from the Burnside ring
to the representation ring is injective
on transitive $G$-sets (\textit{cf.} Example
\ref{ex:gassmann}(2)).}.
This homomorphism allows us to relate
L-equivalence of reduced
zero-dimensional schemes
to Gassmann equivalence of $G$-sets,
and thus to rule out the possibility of nontrivial
L-equivalence in small degree.

\subsection{Definition and basic properties}
\hfill

Let $G$ be a profinite group and let $\Gset$ be the semi-ring of  isomorphism classes of finite $G$-sets  on which $G$ acts continuously for the profinite topology, where
finite sets are considered with discrete topology. Continuity of the action is equivalent
to the requirement that stabilizer of any point
is open (and in particular, a finite index subgroup). Here in $\Gset$, the addition (resp. multiplication) is defined by 
disjoint unions (resp. Cartesian products). We define the Burnside ring 
$\Burn(G)$ of $G$ to be the Grothendieck ring associated to $\Gset$. When 
$G$ is a finite group, the definition of $\Burn(G)$ is the classical one. 
We sometimes refer to elements of $\Burn(G)$,
that is, combinations of isomorphism classes of $G$-sets
with integer coefficients, as \emph{virtual $G$-sets}.
We note that the number of elements 
(resp. the number of orbits)
in a $G$-set
gives rise to a ring homomorphism (resp. a group homomorphism) 
$\Burn(G) \to \Z$.

Let $F$ be a field of characteristic zero and let $\Rep(G,F)$ be the abelian monoidal category of finite dimensional $G$-representations over $F$. 
Let $K_0(\Rep(G,F))$
be the Grothendieck ring of $\Rep(G,F)$. 
There is a well-defined ring homomorphism
\begin{equation}\label{eq:mu}
\mu_G: \Burn(G) \to K_0(\Rep(G, F))
\end{equation}
which sends the class of a continuous finite $G$-set $A$ to the class
of the permutation representation $F[A]$.
We are interested in the kernel of this homomorphism.

\begin{lemdef}\label{lem-equivGm}
Let $G$ be a profinite group and let $A$, $B$ be continuous finite $G$-sets. Fix a field $F$ of characteristic zero. The following conditions are equivalent:
\begin{enumerate}
    \item $\mu_G(A) = \mu_G(B)$ in $K_0(\Rep(G, F))$. 
    \item $F[A] \simeq F[B]$ as $F[G]$-modules.
\end{enumerate}
We say that two continuous finite $G$-sets $A$ and $B$ are \emph{Gassmann equivalent} if they satisfy one of the above equivalent conditions.
\end{lemdef}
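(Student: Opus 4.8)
The implication $(2)\Rightarrow(1)$ is immediate, since isomorphic $F[G]$-modules have the same class in $K_0(\Rep(G,F))$; the content of the statement is the converse $(1)\Rightarrow(2)$. My plan is to reduce to a finite quotient of $G$, where the group algebra is semisimple because $\chr F=0$, and then to invoke the standard fact that the Grothendieck ring of a finite-length semisimple abelian category is free abelian on the classes of its simple objects, so that equality of classes forces equality of the multiplicities of every simple constituent.

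For the reduction, I would argue as follows: since $A$ and $B$ are finite $G$-sets with continuous action, every point stabilizer is an open, hence finite-index, subgroup of $G$; intersecting the finitely many stabilizers of points of $A$ and of $B$ and passing to the normal core (a finite intersection of conjugates of an open subgroup, hence again open) yields an open normal subgroup $N\trianglelefteq G$ acting trivially on both $A$ and $B$. The permutation representations $F[A]$ and $F[B]$ are then pulled back from representations of the finite quotient $\bar G:=G/N$, and by Maschke's theorem (this is where $\chr F=0$ is used) they are semisimple as $F[\bar G]$-modules, equivalently as $F[G]$-modules.

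To finish, I would observe that $\Rep(G,F)$ is a finite-length abelian category, so by Jordan--H\"older $K_0(\Rep(G,F))$ is the free abelian group on the isomorphism classes of simple objects, with the class of a module recording its composition factors with multiplicity. Every simple composition factor of $F[A]$ or of $F[B]$ factors through $\bar G$ and so is a simple $F[\bar G]$-module; and since $F[A]$ and $F[B]$ are semisimple, these composition factors are precisely their simple direct summands, counted with the direct-sum multiplicity. Hence $\mu_G(A)=[F[A]]=[F[B]]=\mu_G(B)$ in $K_0(\Rep(G,F))$ forces $F[A]$ and $F[B]$ to have the same simple summands with the same multiplicities, i.e.\ $F[A]\simeq F[B]$ as $F[G]$-modules. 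The statement is essentially formal once $\chr F=0$ is exploited through Maschke's theorem; the only point that deserves care is the profinite bookkeeping in the reduction step — verifying that $A$ and $B$ are inflated from a common finite quotient and that this is compatible with the definition of $\mu_G$ — after which the argument is the routine linear-algebra fact about Grothendieck groups of semisimple categories.
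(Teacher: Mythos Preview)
Your proof is correct and follows essentially the same route as the paper: reduce to a finite quotient of $G$ (the paper uses the intersection of the kernels of the two actions, which is already normal, so your passage to the normal core is harmless but unnecessary), apply Maschke's theorem to get semisimplicity, and then use Jordan--H\"older to conclude that equal classes in $K_0$ force equal composition factors, hence isomorphism. The arguments match in every substantive respect.
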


\begin{proof}
It is clear that (2) implies (1). Conversely, if $\mu_G(A) = \mu_G(B)$, then since $F[A]$ and $F[B]$ are finite dimensional (as $F$-vector spaces), both $F[A]$ and $F[B]$ admit composition series in $\Rep(G, F)$ of finite length~\cite[Lemma 3.9]{ErdmannHolmRep}. The Jordan-H\"older theorem~\cite[Excercice II.6.3]{Weibel-Kbook} shows that $F[A]$ and $F[B]$ have the same collection of isomorphism classes of (simple) Jordan-H\"older factors. As both $G$-modules $F[A]$ and $F[B]$ factor through a finite quotient $G/H$ of $G$ (we can take $H$ to be the intersection of
the kernels of the two actions, which is an open subgroup of $G$,
hence has finite index), Maschke's Theorem implies that $F[A]$ and $F[B]$ are semisimple $G$-modules. Hence $F[A] \simeq F[B]$.
\end{proof}

The following lemma reduces
Gassmann equivalence to the case
of finite group actions.
If $A$ is a $G$-set,
we write $G^A$ for the kernel
of the action; thus $G^A$ is a normal
subgroup
and $G/G^A$ acts on $A$ faithfully.

\begin{lem}\label{lem-redft}
Two  continuous finite $G$-sets
 $A$ and $B$ are Gassmann equivalent 
 if and only if $G^A = G^B =: H$ and $A$ and $B$ are Gassmann equivalent as $G/H$-sets.
\end{lem}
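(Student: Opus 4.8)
The plan is to reduce the statement to one elementary observation: for any continuous finite $G$-set $A$, the kernel of the permutation representation $F[A]$, viewed as an $F[G]$-module, is exactly the kernel $G^A$ of the action of $G$ on the underlying set. Indeed, $g \in G$ acts as the identity on $F[A]$ if and only if it fixes every basis vector, which happens if and only if it fixes every element of $A$. I would also record at the outset that $G^A = \bigcap_{a \in A} \mathrm{Stab}_G(a)$ is a finite intersection of open subgroups, hence open of finite index, so that $G/G^A$ is a \emph{finite} group and every action of it on a finite set is automatically continuous; this disposes of all continuity bookkeeping when passing to a quotient.

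For the forward implication, I would start from $F[A] \simeq F[B]$ as $F[G]$-modules (using Lemma-Definition~\ref{lem-equivGm}). Isomorphic modules have equal kernels, so the observation above gives $G^A = G^B =: H$. Then $H$ acts trivially on both $A$ and $B$, so both are finite $G/H$-sets, and the $F[G]$-modules $F[A]$ and $F[B]$ are inflated from $F[G/H]$-modules. Since an $F[G]$-linear map between two modules inflated from $G/H$ is the same thing as an $F[G/H]$-linear map, the given isomorphism is already an isomorphism of $F[G/H]$-modules, which is precisely Gassmann equivalence of $A$ and $B$ as $G/H$-sets.

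For the converse, I would simply inflate along the quotient map $G \to G/H$: an isomorphism $F[A] \simeq F[B]$ of $F[G/H]$-modules pulls back to an isomorphism of $F[G]$-modules, and the inflation of the permutation module of a $G/H$-set is the permutation module of the same set regarded as a $G$-set. The hypothesis $G^A = G^B = H$ is used here only to guarantee that $A$ and $B$ are honest $G/H$-sets, so that the phrase ``Gassmann equivalent as $G/H$-sets'' is meaningful.

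I do not expect a genuine obstacle. The single point that has to be handled with care is the identification of the module-theoretic kernel of $F[A]$ with the set-theoretic kernel $G^A$ — this is what forces $G^A = G^B$ and makes the whole reduction go through — together with the remark on openness of $G^A$, which allows one to pass freely between $G$ and the finite quotient $G/H$ without any continuity subtleties.
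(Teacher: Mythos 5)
Your proof is correct and is essentially the paper's own argument: the paper likewise reduces everything to the observation that $G^A = \ker\rho_A$ where $\rho_A$ is the permutation representation on $F[A]$, and then invokes Lemma-Definition~\ref{lem-equivGm}. Your write-up just makes explicit the inflation step and the openness of $G^A$, which the paper leaves implicit.
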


\begin{proof}
This follows from Lemma~\ref{lem-equivGm}, together with the observation that 
$G^A = \ker{\rho_A}$ (and similarly $G^B = \ker(\rho_B)$) where $\rho_A : G \to \GL(V,F)$ is the homomorphism 
defining the $F[G]$-module structure of $F[A]$ 
and $V$ is the underlying $F$-vector space of $F[A]$.
\end{proof}

\begin{lemma}[{\cite[Proposition 1]{ParzanchevskiGsets}, 
cf \cite[Definition 1]{PrasadGassmann}}]
\label{lem:def-Gassmann}
Let $G$ be a finite group and let $A$ and $B$ be finite $G$-sets. Fix a field $F$ of characteristic zero.
The following conditions are equivalent:
\begin{enumerate}
    \item For every $g \in G$, $g$ fixes the same
    number of elements in $A$ and in $B$.
    \item There exist subgroups $H_1, \dots, H_r$,
    $H_1', \dots, H_r'$ of $G$
    such that 
    \[
    A \simeq \bigsqcup_{i=1}^r G/H_i, \;\;\;\;
    B \simeq \bigsqcup_{i=1}^r G/H_i'
    \]
    and for each
    conjugacy class $T \subset G$
    these subgroups
    satisfy
    \begin{equation}\label{eq:Hi}
    \sum_{i=1}^r \frac{|T \cap H_i|}{|H_i|} = \sum_{i=1}^r \frac{|T \cap H_i'|}{|H_i'|}.
    \end{equation}
    \item $A$ and $B$ are Gassmann equivalent.
\end{enumerate}
In particular, the kernel of $\mu$ is independent
of the choice of $F$.
\end{lemma}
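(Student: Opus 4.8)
The plan is to prove the chain of equivalences $(1) \Leftrightarrow (2) \Leftrightarrow (3)$, and then deduce the independence of $\ker\mu$ on $F$ as a formal consequence. Throughout we use that $\mu_G = \mu$ here factors through a finite quotient, so by Maschke's theorem $F[A]$ and $F[B]$ are semisimple and Gassmann equivalence means $F[A] \simeq F[B]$ as $F[G]$-modules (this is exactly Lemma-Definition \ref{lem-equivGm}).

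First I would establish $(1) \Leftrightarrow (3)$, which is the heart of the matter. The function $g \mapsto \#\{a \in A : g \cdot a = a\}$ is precisely the character $\chi_{F[A]}$ of the permutation representation $F[A]$, evaluated at $g$; indeed, in the permutation basis indexed by $A$, the matrix of $g$ is a permutation matrix whose trace counts fixed points, and this computation is valid over any field (the fixed-point count is an integer, hence makes sense in $F$ of characteristic zero). So condition $(1)$ says exactly that $\chi_{F[A]} = \chi_{F[B]}$ as $F$-valued class functions on $G$. Now for $G$ finite and $F$ of characteristic zero, two semisimple $F[G]$-modules are isomorphic if and only if they have equal characters: this is standard when $F$ is a splitting field or when $F = \C$, and in general one passes to a finite Galois extension $F'/F$ that is a splitting field, notes that characters are unchanged by extension of scalars, applies the known result over $F'$ to get $F'[A] \simeq F'[B]$, and then descends using the fact that for finite-dimensional semisimple algebras isomorphism of modules is detected after a field extension (equivalently, that $\Hom_{F[G]}(F[A],F[B])$ has the expected dimension, computable via characters). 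Hence $(1) \Leftrightarrow F[A] \simeq F[B] \Leftrightarrow (3)$.

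Next I would treat $(2) \Leftrightarrow (3)$. Decompose $A = \bigsqcup_i G/H_i$ and $B = \bigsqcup_i G/H_i'$ into orbits; by throwing in trivial summands of the form $G/G$ one may always arrange that the two decompositions have the same number $r$ of terms, so the only real content is equation \eqref{eq:Hi}. For a transitive $G$-set $G/H$ and an element $g$ in a conjugacy class $T$, the number of fixed points of $g$ on $G/H$ equals $\frac{|G|}{|H|}\cdot\frac{|T \cap H|}{|T|}$ by a Burnside-style counting argument (count pairs $(g', xH)$ with $g' \in T$ and $g'$ fixing $xH$, i.e. $x^{-1}g'x \in H$, in two ways). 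Summing over orbits, condition $(1)$ — that $g \in T$ fixes the same number of points in $A$ and $B$ for every conjugacy class $T$ — becomes, after multiplying by $\frac{|T|}{|G|}$, exactly the identity \eqref{eq:Hi}. Thus $(2)$ (with the displayed orbit decompositions) is equivalent to $(1)$, which we have already identified with $(3)$.

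Finally, the "in particular" clause is immediate: condition $(1)$ makes no reference to $F$ at all, so the equivalence $(1) \Leftrightarrow (3)$ shows that Gassmann equivalence, hence $\ker\mu$ (which consists of virtual $G$-sets $A - B$ with $A$, $B$ Gassmann equivalent), does not depend on the choice of characteristic-zero field $F$. I expect the main obstacle to be the descent step in $(1) \Leftrightarrow (3)$ — ensuring that equality of $F$-valued characters, not merely $\C$-valued ones, suffices to conclude isomorphism of $F[G]$-modules over a general field $F$ of characteristic zero; the cleanest route is to invoke that character equality over $F$ implies equality over a splitting field $F' \supseteq F$ by flat base change of $\Hom$-spaces (using semisimplicity), so no genuinely new difficulty arises.
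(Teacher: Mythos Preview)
Your argument is essentially correct and follows the standard character-theoretic route. There is one small slip: in passing from $(1)$ to $(2)$ you say one may ``throw in trivial summands of the form $G/G$'' to equalize the number $r$ of terms in the two orbit decompositions. This is not allowed --- adding a copy of $G/G$ changes the $G$-set. What you actually need is that condition $(1)$ already forces $A$ and $B$ to have the same number of orbits, which follows immediately from Burnside's lemma (the orbit count is $\frac{1}{|G|}\sum_{g\in G}\#\mathrm{Fix}(g)$), or equivalently from the fact that the orbit count is the multiplicity of the trivial representation in the permutation module. With this correction your argument goes through.

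The paper's own proof is considerably shorter and takes a slightly different tack on the field-independence. It simply cites \cite{ParzanchevskiGsets} for the equivalence of (1), (2), (3) in the case $F = \C$, and then deduces that condition (3) is independent of $F$ by invoking the fact that the scalar-extension functor $\Rep(G,\Q) \to \Rep(G,F)$ is injective on isomorphism classes (a reference to Serre). Your proof, by contrast, is self-contained: you spell out the fixed-point/character identification for $(1)\Leftrightarrow(3)$ and the double-counting for $(1)\Leftrightarrow(2)$, and you handle a general characteristic-zero $F$ directly via a Noether--Deuring style descent from a splitting field rather than by first reducing to $\Q$. Both approaches rest on the same underlying fact (characters determine semisimple modules in characteristic zero); yours buys transparency, the paper's buys brevity.
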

\begin{proof}

Equivalence of (1), (2) and (3) for $F = \C$ is proved
in \cite[Proposition 1]{ParzanchevskiGsets}.
Finally, condition (3) is independent of
the choice of $F$ because the functor
\[
\Rep(G, \Q) \to \Rep(G, F) 
\]
is injective on isomorphism classes~\cite[\S 14.6]{MR0450380}. 
\end{proof}

\begin{remark}\label{rem:orbits}
The number of $G$-orbits in Gassmann equivalent continuous $G$-sets
is the same. This follows from Lemma~\ref{lem-redft} and Lemma \ref{lem:def-Gassmann}(2), 
or directly as the number of
orbits equals the multiplicity
of the trivial representation in the permutation representation.
\end{remark}

\begin{proposition}\label{prop:Gassmann}
Let $G$ be a profinite group and let $A$ and $B$ be Gassmann equivalent continuous finite $G$-sets.
If one of the following conditions is satisfied:
\begin{itemize}
    \item $G/G^A$ is a cyclic group
    \item $A$ is transitive and the stabilizer of a point is normal in $G$
    \item $A$ is transitive and $|A| \le 6$
    \item $|A| \le 5$
\end{itemize}
then $A$ and $B$ are isomorphic $G$-sets.
\end{proposition}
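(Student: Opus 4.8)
The overall strategy is to use Lemma \ref{lem-redft} to replace $G$ by the finite quotient $\ol G := G/G^A$, so that throughout we may assume $G$ is finite and acts faithfully on $A$ (note $G^A = G^B$ is forced by Gassmann equivalence). Gassmann equivalence is then equivalent to the condition in Lemma \ref{lem:def-Gassmann}(1): every $g \in G$ fixes the same number of points in $A$ and in $B$. The goal in each of the four cases is to upgrade this ``same character'' statement to an actual isomorphism of $G$-sets, i.e.\ to show the implication $\mu_{\ol G}(A) = \mu_{\ol G}(B) \Rightarrow A \simeq B$ holds in the relevant situation.

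\medskip

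\noindent\emph{The cyclic case.} If $\ol G = G/G^A$ is cyclic, I would argue directly that the permutation character determines the $G$-set. For a cyclic group $C$, a transitive $C$-set is of the form $C/C'$ for a unique subgroup $C'$, and its permutation character is the indicator-type function $g \mapsto \#\{x : gx = x\}$, which is $|C'| \cdot \mathbf{1}[g \in C']$ on the generator's powers. Since subgroups of a cyclic group are totally ordered by the divisibility of their indices and are pairwise non-conjugate, one can recover the multiset of subgroups $\{C_i'\}$ appearing in $A = \bigsqcup C/C_i'$ from the permutation character by a downward induction on $|C_i'|$ (equivalently on the order, starting from the largest fixed-point contributions). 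Hence the permutation character is a complete invariant of $C$-sets, and $A \simeq B$. The same argument applies with $C$ replaced by the image of $\ol G$, which is cyclic.

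\medskip

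\noindent\emph{The normal-stabilizer and transitive cases.} If $A$ is transitive with $A \simeq G/H$ and $H \trianglelefteq G$, then the action of $G$ on $A$ factors through the faithful action of the abelian (indeed, since the action is faithful and transitive with normal stabilizer, $G/H$ embeds in... ) — more carefully, $G^A$ is the normal core of $H$, which equals $H$, so faithfulness forces $H = \{e\}$ and $A$ is the regular representation of $G$; then $B$ Gassmann equivalent to the regular $G$-set forces $B$ regular too by comparing $|B| = |A|$ and the number of fixed points of each $g$, and we are done. For the cardinality cases $|A| \le 6$ transitive, or $|A| \le 5$ in general, the plan is to invoke the known fact — this is the classical statement that the smallest Gassmann triple $(G,H,H')$ with $H,H'$ non-conjugate has $G$ of order $96$ and $[G:H] = 6$ (going back to Gassmann; see \cite{Perlis, Gassmannleq6}) — that there are no nontrivial Gassmann-equivalent pairs of $G$-sets of degree $\le 5$, nor of transitive $G$-sets of degree $\le 6$. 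Concretely one checks that for $|A| = |B| \le 5$ (resp. transitive of degree $6$), equality of permutation characters together with a finite case analysis over the possible transitive constituents of each degree forces $A \simeq B$; the number of transitive $G$-sets of each small degree is small and each is distinguished by its character among sets of the same total degree.

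\medskip

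\noindent\emph{Main obstacle.} The cyclic and normal-stabilizer cases are soft. The real content is the small-degree statement: ruling out the ``sporadic'' near-misses. The hard part will be organizing the bookkeeping so that partitioning $A$ and $B$ into transitive pieces and matching permutation characters genuinely forces an isomorphism rather than merely an equality of virtual $G$-sets; this is where one must appeal to (or reprove) the fact that Gassmann triples of order/degree below the $[G:H]=6$, $|G|=96$ threshold are trivial, i.e.\ that the two transitive constituents, if any mismatch occurred, would have conjugate point-stabilizers. I expect to cite \cite{Perlis} or \cite{Gassmannleq6} for the group-theoretic input and then assemble the non-transitive case from it by induction on the number of orbits, peeling off a common transitive constituent of minimal degree at each stage.
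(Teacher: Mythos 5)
Your reduction to a faithful action of the finite quotient $\ol G$ via Lemma~\ref{lem-redft}, and your treatment of the normal-stabilizer case (reduce to the regular $G$-set and read off that $B$ must also be regular from fixed-point counts), are both correct; the latter is essentially the same observation the paper makes (via~\eqref{eq:Hi} with $r=1$), just phrased differently. Your plan for the transitive $|A|\le 6$ case matches the paper's: cite \cite{Perlis} or \cite{Gassmannleq6}.

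Two issues, one minor and one substantive.

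\emph{Minor (cyclic case).} You assert that ``subgroups of a cyclic group are totally ordered by the divisibility of their indices,'' which is false (e.g.\ $C_2$ and $C_3$ inside $C_6$). The conclusion you want is still true and your inductive scheme is salvageable: for $g$ of order $n/d_0$ one has $\chi_A(g) = \sum_{d\mid d_0} m_d\,d$, so the multiplicities $m_d$ are determined by M\"obius inversion / induction on divisors, not by a total order. The paper sidesteps this entirely by citing \cite[Proposition 4.1]{ParzanchevskiGsets}.

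\emph{Substantive (the $|A|\le 5$ case).} Your proposed organizational device --- ``induction on the number of orbits, peeling off a common transitive constituent of minimal degree at each stage'' --- does not obviously work, because Gassmann equivalence does not a priori give you any transitive constituent common to $A$ and $B$. Indeed, Example~\ref{ex:gassmann}(1) shows that already at degree $6$ the orbit size multisets of Gassmann-equivalent sets can differ ($2+2+2$ versus $4+1+1$), so there is nothing to peel off. To make your strategy run for $|A|\le 5$ you must first prove that the orbit \emph{lengths} of $A$ and $B$ agree --- this is not automatic and is the actual content of the claim. The paper handles this by an explicit case analysis on the number $m$ of orbits ($m=2,3$ being the nontrivial cases, with possible mismatches $3+1+1$ vs.\ $2+2+1$, and $4+1$ vs.\ $3+2$), showing in each case that the embedding of $G$ into the relevant symmetric product forces the orbit lengths, and then showing that $G$ determines the full $G$-set structure even in the non-separable configurations like $A_2\sqcup A_2'\sqcup\{\ast\}$. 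Your proposal flags this as the ``main obstacle'' but does not close it; as written, the peeling argument is a gap, not a proof.
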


If $\alpha \in \Burn(G)$, then
we write $|\alpha|$ for the smallest
$\max(|A|,|B|)$ over all
possible representations $\alpha = [A] - [B]$
with continuous $G$-sets $A$ and $B$.
We have the following immediate corollary of Proposition~\ref{prop:Gassmann}. 

\begin{cor}\label{cor:Gassmann}
Let $\alpha \in \Ker(\mu_G)$.
If 
$|\alpha| \le 5$, then $\alpha = 0$.
\end{cor}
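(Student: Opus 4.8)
The plan is to deduce Corollary~\ref{cor:Gassmann} formally from Proposition~\ref{prop:Gassmann}; no genuinely new argument is required, and the role of the proof is just to match up hypotheses. Let $\alpha \in \Ker(\mu_G)$ with $|\alpha| \le 5$. By the definition of $|\alpha|$ there exist continuous finite $G$-sets $A$ and $B$ with $\alpha = [A] - [B]$ in $\Burn(G)$ and $\max(|A|,|B|) = |\alpha| \le 5$.

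The key steps, in order, are as follows. First, from $\mu_G(\alpha) = 0$ we get $\mu_G(A) = \mu_G(B)$ in $K_0(\Rep(G,F))$, so $A$ and $B$ are Gassmann equivalent by Lemma-Definition~\ref{lem-equivGm}; in particular $F[A] \simeq F[B]$ as $F[G]$-modules. Second, comparing $F$-dimensions of these permutation representations gives $|A| = |B|$, and hence $|A| = |B| \le 5$. Third, the fourth bullet of Proposition~\ref{prop:Gassmann} now applies to the Gassmann equivalent pair $(A,B)$ and produces an isomorphism $A \simeq B$ of $G$-sets. Therefore $[A] = [B]$ in $\Burn(G)$ and $\alpha = 0$, as claimed.

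The only point worth flagging is the second step: one must know that a representation $\alpha = [A] - [B]$ realizing the minimum $|\alpha|$ automatically has $|A| = |B|$, so that the size hypothesis $|A|\le 5$ of Proposition~\ref{prop:Gassmann} is actually met rather than just $\max(|A|,|B|)\le 5$. This is forced, as above, because $\alpha \in \Ker(\mu_G)$ already equalizes the dimensions of the two permutation representations. Beyond this bookkeeping there is no obstacle: the entire substance of the statement — that Gassmann triples (more precisely, pairs of Gassmann equivalent $G$-sets) of size at most $5$ are trivial — is contained in Proposition~\ref{prop:Gassmann}, which we take as already established.
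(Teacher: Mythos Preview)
Your proof is correct and matches the paper's approach: the paper simply states this as an ``immediate corollary of Proposition~\ref{prop:Gassmann}'' without giving a separate proof, and your argument spells out exactly that deduction. One minor remark: your second step is superfluous, since $\max(|A|,|B|) \le 5$ already gives $|A| \le 5$ directly, so there is no need to first establish $|A| = |B|$ before invoking Proposition~\ref{prop:Gassmann}.
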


\begin{proof}[Proof of Proposition~\ref{prop:Gassmann}]
By Lemma~\ref{lem-redft}, we can assume that $G$ is a finite group and the $G$-actions on both $A$ and $B$ are faithful.
When $G$ is a finite cyclic group, the result is~\cite[Proposition 4.1]{ParzanchevskiGsets}.
If $A$ is transitive, then by Remark \ref{rem:orbits},
$B$ is also transitive. Let $H$ (resp. $H'$)
be the stabilizer of an element $a \in A$ (resp. $b \in B$).
Then $G/H$ and $G/H'$ are Gassmann equivalent;
in this case one refers to $(G,H,H')$ as a Gassmann triple.
It is well-known and easy to see 
(e.g. using \eqref{eq:Hi} with $r=1$)
that if $H$ is normal, then
$H' = H$. 
It is a nontrivial computation that if $[G:H] \le 6$
in Gassmann triple $(G,H,H')$,
$H$ and $H'$ will be conjugate so that $A$ and $B$ are isomorphic, see \cite[Proof of Theorem 3]{Perlis} or~\cite[p.3]{Gassmannleq6}.

Finally, we need to consider the case when $A$ may not be transitive
but $t:=|A| \le 5$. We write $A = \{1,\dots, t\}$ and let $S_t$ be the permutation group of $t$ elements. Since the $G$-action on $A$ is assumed to be faithful, it realizes $G$ as a subgroup of $S_t$.

Up to adding trivial $G$-sets $\{*\}$, we can assume that $t = 5$. By Remark~\ref{rem:orbits}, $A$ and $B$ have the same number
of $G$-orbits, and we argue according to the number of orbits $m$. When $m = 5$, then both $A$ and $B$ are trivial $G$-sets. When $m = 4$, then both $A$ and $B$ are isomorphic to  $\{1,2\} \sqcup \{*\} \sqcup \{*\} \sqcup \{*\}$ and $G \simeq \Z/2\Z$ acts on $\{1,2\}$ by involution. The case $m=1$ is already covered by the third case of Proposition~\ref{prop:Gassmann}. So it remains to study the case where $ m = 2$ or $3$.

First we show that $G$ uniquely determines the length of the orbits of the $G$-set $A$. 
If $m=3$, then $A$ is isomorphic to either $A_3  \sqcup \{*\} \sqcup \{*\}$ or $A_{2}  \sqcup A'_2  \sqcup \{*\}$ with $|A_3| = 3$ and $|A_2| = |A'_2| = 2$. So $G \le S_3$ in the former case and $G \le \Z/2\Z \times \Z/2\Z$ in the latter case. In the former case, since the $G$-action is transitive on $A_3$,  $G$ contains $\Z/3\Z \le S_3$, so we cannot embed $G$ into $\Z/2\Z \times \Z/2\Z$. Therefore $G$ uniquely determines the length of the orbits. If $m =2$, then $A$ is isomorphic to either $A_3  \sqcup A_2$ or $A_{4}  \sqcup \{*\}$ with $|A_i| = i$. So $G \le S_3 \times \Z/2\Z$ in the former case and $G \le S_4$ in the latter case. In the former case, since the $G$-action is transitive on both $A_3$ and $A_2$, $G$ contains $ \Z/3\Z \times \Z/2\Z \le S_3 \times \Z/2\Z$. Since $S_4$ 
has no elements of order six, we cannot embed $G$ into $S_4$. Hence the lengths of the orbits are determined by $G$ as well.

We still assume that $m = 2$ or $3$. If $A = A' \sqcup A''$ as $G$-sets where $A'$ is transitive and $A''$ is a disjoint union of trivial $G$-sets, then we also have the same type of decomposition $B = B' \sqcup B''$ with $|A'| = |B'|$. It follows that $\mu_G(A') = \mu_G(B')$, so $A' \simeq B'$ since both $G$-sets are transitive. This covers the cases where $A = A_3  \sqcup \{*\} \sqcup \{*\}$ and  $A = A_4  \sqcup \{*\}$. 
If $A = A_{2}  \sqcup A'_2  \sqcup \{*\}$, then either $G = \Z/2\Z \times \Z/2\Z$ or $G$ is the diagonal  $\Z/2\Z \le \Z/2\Z \times \Z/2\Z$. In either case we verify that $G$ uniquely determines the $G$-set structure of $A$. If  $A = A_3  \sqcup A_2$, then either $G \le \Z/3\Z \times \Z/2\Z$ or $G = S_3 \times \Z/2\Z$.
Once again in either case, we verify that $G$ uniquely determines the $G$-set structure of $A$.
\end{proof}

The following example shows that the lower bounds
on the order of $G$-sets in Proposition~\ref{prop:Gassmann} 
are optimal.

\begin{example}\label{ex:gassmann}
(1) \cite[1.1]{ParzanchevskiGsets}
Let $G = \Z/2\Z  \times \Z/2\Z$ 
and let 
\[
H_1 = \langle (1,0) \rangle, \;\;
H_2 = \langle (0,1) \rangle, \;\;
H_3 = \langle (1,1) \rangle,
\]
and
\[
 H_1' = \{(0,0)\}, \;\; H_2' = H_3' = G.
\] 
Then $H_1, H_2, H_3$ and $H_1', H_2', H_3'$
satisfy conditions of Lemma \ref{lem:def-Gassmann}(2)
thus give rise to nonisomorphic nontransitive Gassmann equivalent
sets of order $6$ with orbit decompositions
$2+2+2$ and $4+1+1$ respectively.
Indeed for abelian groups the condition \eqref{eq:Hi} rewrites
as
\[
\sum_{H_i \ni g} \frac1{|H_i|} = \sum_{H'_i \ni g} \frac1{|H'_i|} \text{ for all $g \in G$}
\]
which is immediately verified.

(2) \cite[p. 358]{Perlis} Let $G = \mathrm{PSL}_2(\F_7) \simeq \mathrm{PSL}_3(\F_2)$ be
the simple group of order $168$. Let $A$ be the set of $\F_2$-points
of the projective plane over $\F_2$, 
and $B$ be the set of $\F_2$-lines on this plane, that
is points of the dual projective plane.
Then $A$, $B$ are transitive $G$-sets of order $7$.
Using simple linear algebra of the $\mathrm{PSL}_3(\F_2)$-action on $\P^2_{\F_2}$,
one shows that $A$ and $B$ satisfy
Lemma \ref{lem:def-Gassmann}(1), so that $A$ and $B$ are Gassmann
equivalent, and one can check that they are not isomorphic.
See also~\cite[Theorem 3]{Gassmannleq6}, which
shows that this pair is the only nontrivial
Gassmann triple of faithful transitive $G$-sets of order $7$.
\end{example}

\subsection{L-equivalence and Gassmann equivalence}
\label{subsec:L-eq}

\hfill

Let $K_0(\Var/k)$ denote
the Grothendieck
ring of varieties with generators given by isomorphism classes
$[X]$ of schemes of finite type over $k$ and relations generated by cut and paste
relations 
\begin{equation}\label{eq:cut-and-paste}
[X] = [Z] + [X \setminus Z]
\end{equation}
for every closed $Z \subset X$.
The ring structure on $K_0(\Var/k)$ is induced by products of 
schemes.
We write $\L = [\A^1]$.

It is known that $\L$ is a zero-divisor \cite{Borisov}
and that the annihilator of $\L^k$, $k \ge 1$ encodes
deep geometric information.
Following \cite{KuznetsovShinder} we call two smooth projective
connected varieties $X$, $Y$ \emph{L-equivalent} if
for some $k \ge 0$,
\[
\L^k\cdot([X]-[Y]) = 0.
\]
We sometimes refer to the $[X] = [Y]$ case
as trivial L-equivalence.
It is currently unknown if zero-dimensional varieties 
can be nontrivially L-equivalent.
The smallest-dimensional example of nontrivial
L-equivalence is that of 
genus one curves over non-closed fields
\cite{ShinderZhang}.
See \cite{KuznetsovShinder, ShinderZhang} for some details about
conjectural relationship between L-equivalence and derived equivalence,
and the references therein for the currently known examples.
Note that the classes in the Grothendieck ring are insensitive
to nonreduced structure, hence when studying
L-equivalence we can always assume schemes to be reduced.

\begin{remark}\label{rem:alt-grothendieck}
For fields of positive characteristic, 
there exist two alternative definitions
of the Grothendieck ring of varieties,
hence alternative definitions
of L-equivalence.

(1) First of all, one can define
the Grothendieck ring  
$K_0^\bl(\Var/k)$ generated by
classes of smooth projective varieties
with blow up relations as in
\cite[Theorem 3.1 (bl)]{Bittner}.
We have an obvious homomorphism
\[
K_0^\bl(\Var/k) \to K_0(\Var/k)
\]
which is known to be an isomorphism if $k$ is a field
of characteristic zero \cite[Theorem 3.1]{Bittner}.

(2) Furthermore, in positive characteristic
one can define a modified Grothendieck ring \cite{NicaiseSebag}
by imposing an additional relation of identifying
varieties related by universal homeomorphisms
(originating from totally inseparable coverings
in positive characteristic).
It is not known if this additional relation in fact 
gives
rise to a non-isomorphic ring as all the standard invariants
which are used to distinguish elements in the Grothendieck
ring factor through
the modified ring as well \cite{NicaiseSebag}.
\end{remark}

Let $k$ be a field and 
let $G_k = \Gal(\bar{k}/k)$ where $\bar{k}$ denotes the separable closure of $k$.
For a variety $X/k$ (not necessarily smooth or projective)
we consider its $\ell$-adic
cohomology $H^i_{\et,c}(X_{\overline{k}}, \bQ_{\ell})$ (with $\ell \ne \chr(k)$)
with compact supports
as a
$G_k$-module. 
These groups are finite-dimensional $\bQ_\ell$-vector spaces
~\cite[Remark I.12.16]{FreitagKiehlEtCoh},
they vanish outside the range $0 \le i \le 2\dim(X)$ and
give rise to a group homomorphism
\begin{equation}\label{eq:mu-et}
\mu_{\et}: K_0(\Var/k) \to K_0(\Rep (G_k, \bQ_{\ell}))
\end{equation}
defined by
\[
\mu_{\et}([X]) = \sum_{i=0}^{2\dim(X)} (-1)^i [H^i_{\et,c}(X_{\overline{k}}, \bQ_{\ell})].
\]

We have
\begin{equation}\label{eq:mu-L}
\mu_{\et}(\bL) = \mu_{\et}([\bP^1]-1) = [\bQ_{\ell}(-1)]
\end{equation}
and furthermore, the projective bundle formula
for \'etale cohomology implies
that $\mu_{\et}$ is a $\Z[\L]$-module homomorphism
where $\L$ acts by multiplication by $\bQ_{\ell}(-1)$
on the Galois
representations. (The map $\mu_\et$ is even a ring homomorphism by the Künneth formula~\cite[Corollary VI.8.23]{Milne}
but we do not need this fact.)

The \'etale realization \eqref{eq:mu-et}
is useful for extracting information
from a class in the Grothendieck of varieties.
We record the following example to be used later.

\begin{example}\label{ex:mu-et-X}
Let $X$ be a geometrically rational smooth projective surface.
Then since all cohomology classes on $X$ are algebraic,
$\mu_{\et}([X]) = [\Q_\ell] + [\Q_\ell(-2)]  +
[\NS(X_{\ol{k}}) \otimes \Q_\ell(-1)]$,
where $\Q_\ell$ is considered
as a trivial one-dimensional $G_k$-representation.
In particular, if $X$, $X'$ are two
such surfaces, and $[X] = [X']$, then $\NS(X_{\ol{k}})$
and $\NS(X'_{\ol{k}})$ have the same class in $K_0(\Rep(G_k, \Q_\ell))$.
\end{example}

We explain how Gassmann equivalence relates
to L-equivalence of reduced $k$-schemes of dimension 0. Let $\EtSch_k$ be the semi-ring of $k$-schemes which are étale over $\Spec(k)$. As we assume $k$ to be perfect, $\EtSch_k$ is also the semi-ring of reduced $k$-schemes of dimension 0. Here in $\EtSch_k$, the addition (resp. multiplication) is defined by 
disjoint unions (resp. products over $\Spec(k)$). For every $Z \in \EtSch_k$, its base change $Z_{\bar{k}}$ to the separable closure $\bar{k}$ of $k$ 
is endowed with a $G_k$-action.
By Galois descent, The map $\EtSch_k \to \Gset$ sending $Z \in \EtSch_k$ to the underlying continuous $G_k$-set of $Z_{\bar{k}}$ 
is an isomorphism of semi-rings. As we assume $k$ to be perfect, this induces a ring isomorphism
\begin{equation}\label{eq:burn-galois}
 \Z[\Var^0/k] \simeq \Burn(G_k)
\end{equation}
where $\Var^0/k$ denotes the set of (irreducible) $k$-varieties of dimension 0.

We have a natural ring homomorphism
\begin{equation}\label{eq:fields-to-K0vars}
\Z[\Var^0/k] \to K_0(\Var/k)
\end{equation}
which sends $Z$ to $[Z]$. 
It follows from the blow up presentation
of the Grothendieck ring \cite[Theorem 3.1]{Bittner} that
over fields of characteristic zero \eqref{eq:fields-to-K0vars}
admits a splitting given by $X \mapsto \Spec(H^0(X,\OO_X))$ ($X$ smooth
projective), hence for
characteristic zero fields
\eqref{eq:fields-to-K0vars} is injective.

Throughout this text, the same notation $Z_{\bar{k}}$ 
(and also $Z$ itself, when it does not lead to any confusion) 
denotes the underlying $G_{k}$-set of $Z_{\bar{k}}$ for every \'etale $k$-scheme $Z$.

We work with Gassmann equivalence of such schemes
considered as sets with Galois group action.

\begin{lemma}\label{lem:L-eq-Gass}
Let $Z$ and $Z'$ be \'etale $k$-schemes. 
If $Z$ and $Z'$
are L-equivalent in the Grothendieck ring
or in any of the modifications
explained in Remark \ref{rem:alt-grothendieck}, then $Z_{\bar{k}}$ and $Z'_{\bar{k}}$
are Gassmann equivalent.
\end{lemma}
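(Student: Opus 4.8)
The plan is to reduce the statement about $L$-equivalence to a statement about the \'etale realization $\mu_\et$ and then apply the identification \eqref{eq:burn-galois} together with Lemma~\ref{lem-equivGm}. First I would treat the three versions of the Grothendieck ring uniformly: in each case an $L$-equivalence $\L^k \cdot ([Z] - [Z']) = 0$ is an identity in $K_0(\Var/k)$ (or its modification), so I need a group homomorphism out of each modified ring that kills the $\L$-torsion issue. The key point is that $\mu_\et$ from \eqref{eq:mu-et} factors through $K_0^\bl(\Var/k)$ (it is defined by blow-up-compatible data, or one simply notes $H^i_{\et,c}$ satisfies the blow-up relation), and it also factors through the Nicaise--Sebag modified ring because universal homeomorphisms induce isomorphisms on $\ell$-adic cohomology with compact supports (this is classical: \'etale cohomology is insensitive to universal homeomorphisms, e.g.\ the topological invariance of the \'etale site). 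So in all three cases we may apply $\mu_\et$.

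Next I would apply $\mu_\et$ to the relation $\L^k \cdot ([Z] - [Z']) = 0$. Since $Z$ and $Z'$ are $0$-dimensional, $\mu_\et([Z]) = [\bQ_\ell[Z_{\bar k}]]$ and $\mu_\et([Z']) = [\bQ_\ell[Z'_{\bar k}]]$ are the classes of the permutation representations, i.e.\ $\mu_\et([Z]) = \mu_{G_k}([Z_{\bar k}])$ and similarly for $Z'$, where $\mu_{G_k}$ is the map \eqref{eq:mu} with $F = \bQ_\ell$. Because $\mu_\et$ is a $\Z[\L]$-module homomorphism with $\L$ acting by the invertible element $[\bQ_\ell(-1)]$ (by \eqref{eq:mu-L} and the projective bundle formula), multiplication by $\L^k$ is injective on the image, so we get $\mu_{G_k}([Z_{\bar k}]) = \mu_{G_k}([Z'_{\bar k}])$ in $K_0(\Rep(G_k, \bQ_\ell))$. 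By Lemma~\ref{lem-equivGm} (with $F = \bQ_\ell$) this is exactly the statement that $Z_{\bar k}$ and $Z'_{\bar k}$ are Gassmann equivalent.

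The only mild subtlety is the continuity/finiteness needed to invoke Lemma~\ref{lem-equivGm}: $Z_{\bar k}$ and $Z'_{\bar k}$ are finite $G_k$-sets with continuous (open-stabilizer) action because $Z$, $Z'$ are finite \'etale over $\Spec(k)$, so this is automatic. I expect the main obstacle --- though it is more bookkeeping than genuine difficulty --- to be checking that $\mu_\et$ genuinely descends to the two modified rings of Remark~\ref{rem:alt-grothendieck}: for $K_0^\bl(\Var/k)$ one must confirm the blow-up relation $[\Bl_Z X] - [E] = [X] - [Z]$ is respected by $\sum (-1)^i [H^i_{\et,c}]$, which follows from the projective bundle formula applied to the exceptional divisor together with the long exact sequence of the pair; for the Nicaise--Sebag ring one invokes invariance of $\ell$-adic cohomology under universal homeomorphisms. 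Once these two factorizations are in place, the argument above goes through verbatim in each case, and the conclusion is uniform.

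\begin{proof}
In each of the three versions of the Grothendieck ring there is a well-defined realization homomorphism to $K_0(\Rep(G_k, \bQ_\ell))$ extending $\mu_\et$ of \eqref{eq:mu-et}. For $K_0^\bl(\Var/k)$ this is because $\sum_i (-1)^i [H^i_{\et,c}(-_{\bar k}, \bQ_\ell)]$ satisfies the blow-up relation: for a smooth center $Z \subset X$ with exceptional divisor $E$ over $\Bl_Z X$, the projective bundle formula gives $\mu_\et([E]) - \mu_\et([Z] \times [\bP^{c-1}])$-type cancellation matching $\mu_\et([\Bl_Z X]) - \mu_\et([X])$ via the localization sequence. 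For the Nicaise--Sebag modified ring, it is because $\ell$-adic cohomology with compact supports is invariant under universal homeomorphisms. So in every case we may apply a homomorphism $\bar\mu_\et$ which on classes of $0$-dimensional reduced $k$-schemes $W$ equals $[\bQ_\ell[W_{\bar k}]] = \mu_{G_k}([W_{\bar k}])$, where $\mu_{G_k}$ is \eqref{eq:mu} with $F = \bQ_\ell$.

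Applying $\bar\mu_\et$ to the relation $\L^m \cdot ([Z] - [Z']) = 0$ and using that $\bar\mu_\et$ is a $\Z[\L]$-module map with $\L$ acting by the invertible element $[\bQ_\ell(-1)]$ (by \eqref{eq:mu-L} and the projective bundle formula), we conclude
\[
\mu_{G_k}([Z_{\bar k}]) = \mu_{G_k}([Z'_{\bar k}]) \quad \text{in } K_0(\Rep(G_k, \bQ_\ell)).
\]
Since $Z$ and $Z'$ are finite \'etale over $\Spec(k)$, the $G_k$-sets $Z_{\bar k}$ and $Z'_{\bar k}$ are finite with continuous action, so Lemma~\ref{lem-equivGm} (applied with $F = \bQ_\ell$) shows that $Z_{\bar k}$ and $Z'_{\bar k}$ are Gassmann equivalent.
\end{proof}
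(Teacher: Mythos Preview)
Your proof is correct and follows essentially the same approach as the paper's: apply the \'etale realization $\mu_\et$, use that $\mu_\et(\L)=[\bQ_\ell(-1)]$ is invertible in $K_0(\Rep(G_k,\bQ_\ell))$ to cancel the $\L^m$, and invoke Lemma~\ref{lem-equivGm}. The only cosmetic difference is that for $K_0^\bl(\Var/k)$ the paper simply precomposes with the natural map $K_0^\bl(\Var/k)\to K_0(\Var/k)$ rather than verifying the blow-up relation directly, and for the Nicaise--Sebag ring it cites \cite[Proposition~4.1.(3)]{NicaiseSebag} instead of invoking invariance of \'etale cohomology under universal homeomorphisms.
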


\begin{proof}
We use the \'etale realization \eqref{eq:mu-et}, which is defined
by pre-composition on $K_0^\bl(\Var/k)$,
and also factors through the modified Grothendieck
ring from Remark \ref{rem:alt-grothendieck} (2) \cite[Proposition 4.1.(3)]{NicaiseSebag}.

For any  étale $k$-scheme $Z$, we have
\begin{equation}\label{eq:mu-et-mu-G}
\mu_{\et}([Z]) = [H^0_\et(Z_{\bar{k}}, \bQ_{\ell})] = [\bQ_{\ell}[Z_{\bar{k}}]] = \mu_{G_k}([Z_{\bar{k}}]).
\end{equation}
For each $j \ge 0$ we consider the composition

	$$
		\begin{tikzcd}[cramped]
		\Burn(G_k) \simeq  \Z[\Var^0/k] \ar[r,"\times \L^j"] & K_0(\Var/k) \ar[r, "\mu_\et"]   &  K_0(\Rep(G_k,\bQ_{\ell})) \ar[r,"\otimes \bQ_{\ell}(j)"] & K_0(\Rep(G_k,\bQ_{\ell}))\\
		\end{tikzcd}
		$$
which is equal to $\mu_{G_k}$ \eqref{eq:mu}
by \eqref{eq:mu-L} and \eqref{eq:mu-et-mu-G}. Thus
if $Z$, $Z'$ are L-equivalent, then
$Z_{\bar{k}}$, $Z'_{\bar{k}}$ are Gassmann equivalent.
\end{proof}

\begin{remark}\label{rem:GCM}
Properties of classes $[X]$ of smooth projective
varieties in the Grothendieck rings often go in parallel with properties of their Chow motives \cite{Andre}.
In our situation, by \cite[Exemple
4.1.6.1]{Andre}, if $k$ has characteristic zero,
then two \'etale $k$-schemes $Z$, $Z'$ are Gassmann equivalent 
if 
and only if the Chow motives of $Z$, $Z'$ with rational
coefficients are isomorphic.
Thus in this setting 
Lemma \ref{lem:L-eq-Gass} says that
L-equivalence
implies isomorphism of Chow motives. 
The same
result is expected
for all smooth projective varieties,
and it follows from the conjectural
uniqueness of direct sum 
decompositions for Chow motives,
see e.g. 
\cite[Conjecture 2.5, Conjecture 2.6]{Goettsche-hilbert}.
\end{remark}

\begin{example}\label{ex:R-Fq}
Let $k$ be a field
such that all continuous $G_k$-actions
on finite sets factor through a
finite cyclic quotient of $G_k$, e.g.
$k = \R$ or $k = \F_q$.
By Proposition \ref{prop:Gassmann} cyclic groups
do not allow nontrivial Gassmann equivalence, 
hence by Lemma \ref{lem:L-eq-Gass}
L-equivalence of \'etale $k$-schemes implies
their isomorphism.
\end{example}

\begin{corollary}\label{cor:L-equiv}
Let $Z$ and $Z'$ be \'etale $k$-schemes. 
If $Z$ and $Z'$
are L-equivalent 
in the Grothendieck ring
or any of the modifications explained in Remark \ref{rem:alt-grothendieck}, then $Z \simeq Z'$
as soon as one of the following
conditions is satisfied:
\begin{itemize}
\item $Z$ is $k$-irreducible and 
Galois over $\Spec(k)$
\item $Z$ is $k$-irreducible with $\deg_k Z \le 6$
\item $Z$ satisfies $\deg_k Z \le 5$.
\end{itemize}
\end{corollary}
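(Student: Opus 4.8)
The plan is to deduce the corollary directly from the two results already in hand: Lemma~\ref{lem:L-eq-Gass}, which converts L-equivalence of \'etale $k$-schemes into Gassmann equivalence of the associated continuous $G_k$-sets, and Proposition~\ref{prop:Gassmann}, which lists situations in which Gassmann equivalent $G$-sets must be isomorphic. The only thing to do is to translate the three hypotheses on $Z$ into the corresponding hypotheses on the $G_k$-set $A := Z_{\bar{k}}$.

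First I would spell out the dictionary coming from the semi-ring isomorphism $\EtSch_k \simeq \Gset$ used to obtain \eqref{eq:burn-galois}. Under it, $Z$ is $k$-irreducible if and only if $A$ is a transitive $G_k$-set; one has $\deg_k Z = |A|$; and $Z$ being Galois over $\Spec(k)$ means $Z = \Spec(L)$ for a finite Galois extension $L/k$, which amounts to saying that the stabilizer $\Gal(\bar{k}/L)$ of a point of $A$ is a normal subgroup of $G_k$. Hence the three bullets of the corollary correspond respectively to the second, third and fourth bullets of Proposition~\ref{prop:Gassmann}. (In the last case $Z$ need not be connected, so $A$ need not be transitive — which is harmless, as the fourth bullet of Proposition~\ref{prop:Gassmann} makes no transitivity assumption.)

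With this dictionary the argument is immediate: if $Z$ and $Z'$ are L-equivalent, then $A = Z_{\bar{k}}$ and $B = Z'_{\bar{k}}$ are Gassmann equivalent by Lemma~\ref{lem:L-eq-Gass}; under any one of the three hypotheses, Proposition~\ref{prop:Gassmann} gives an isomorphism $A \simeq B$ of continuous $G_k$-sets; and descending back along $\EtSch_k \simeq \Gset$ yields $Z \simeq Z'$ as $k$-schemes. I do not anticipate any genuine obstacle, since all the substance sits in Lemma~\ref{lem:L-eq-Gass} and Proposition~\ref{prop:Gassmann}; the only point meriting a sentence of care is the Galois-theoretic identification in the first bullet.
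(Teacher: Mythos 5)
Your proposal is correct and is exactly the paper's argument: the paper's proof is the one-line combination of Lemma~\ref{lem:L-eq-Gass} (L-equivalence implies Gassmann equivalence) with Proposition~\ref{prop:Gassmann} (Gassmann equivalence implies isomorphism under the listed hypotheses), and your translation of the three bullets into the transitive/normal-stabilizer/cardinality conditions on the $G_k$-set is the intended dictionary.
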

\begin{proof}
By Lemma \ref{lem:L-eq-Gass}, $Z_{\bar{k}}$ and $Z'_{\bar{k}}$
are Gassmann equivalent
and by 
Proposition \ref{prop:Gassmann} they are isomorphic.
\end{proof}

\begin{example}\label{ex:ex-cubics}
Translating Example \ref{ex:gassmann}(1) into the language of \'etale schemes, we obtain
the following. Let $k = \Q$,
and define degree $6$ schemes
\begin{align*}
Z &= \Spec(\Q(\sqrt{\alpha})) \sqcup \Spec(\Q(\sqrt{\beta})) \sqcup \Spec(\Q(\sqrt{\alpha\beta})) \\
Z' &= \Spec(\Q(\sqrt{\alpha}, \sqrt{\beta})) \sqcup \Spec(\Q) 
\sqcup \Spec(\Q),
\end{align*}
where we choose any $\alpha, \beta \in \Q^\star$ which are nontrivial and distinct in $\Q^\star / (\Q^\star)^2$.
Then $Z$, $Z'$ are Gassmann equivalent, and have isomorphic Chow motives
(see Remark \ref{rem:GCM})
but we do not know how to check
if they are L-equivalent or not.

Embedding $Z$, $Z'$ into $\P^2$ so that both
images are in general position (for instance, if $\ga+\gb\neq1$, we can send $Z$ onto $[\pm\sqrt{\alpha}:1:0],[0:\pm\sqrt{\beta}:1],[1:0:\pm\sqrt{\alpha\beta}]$ and $Z'$ onto $[\pm\sqrt{\alpha}:\pm\sqrt{\beta}:1],[1:0:1],[0:1:1]$), we obtain two del Pezzo surfaces of degree $3$:
$X = \Bl_{Z}(\P^2)$, $X' = \Bl_{Z'}(\P^2)$.
These two surfaces are not isomorphic, as the first one has three
$k$-rational lines, and the second one has five.

However, we have an isomorphism
\[
\NS(X_{\bar{\Q}}) \otimes \Q \simeq \NS(X'_{\bar{\Q}}) \otimes \Q
\]
of permutation Galois representations, which shows that
the associated Galois set is not uniquely defined.
One can make a similar example integrally, using
integral
Gassmann triples
as in \cite{Scott}, \cite{PrasadGassmann}. 
See Example \ref{ex:cubics-M} where
we explain that using our techniques
we \emph{can} recover $Z$ (resp. $Z'$)
from $X$ (resp. $X'$), providing another proof
that they are not isomorphic.

Finally note that we do not know if $[X] = [X']$ as by the blow up relation
in the Grothendieck ring of varieties,
$[X] = [\P^2] + \L[Z]$ and $[X'] = [\P^2] + \L[Z']$ so 
$[X] = [X']$ 
would imply the
L-equivalence of $Z$ and $Z'$, which is unknown.
\end{example}

\section{Factorization centers}\label{sec:main}

In this section we introduce the invariant $c(\phi)$ 
keeping track of the factorization centers  in any decomposition
of a birational isomorphism of $\phi$ between surfaces
into a sequence of blow ups
and blow downs,
formulate the main theorem and  interpret it
in terms of the truncated
Grothendieck ring of varieties.

\subsection{Formulation of the main result}

Fix a perfect field $k$.
Let $\phi: X \dashrightarrow Y$ be a birational
isomorphism of smooth projective $k$-surfaces.
By the strong factorization theorem~\cite[Corollary of Lemma III.4.4]{Manin}
(or~\cite[Lemma 54.17.2]{stacks-project})
we have a decomposition
\begin{equation}\label{eq:phi}
\xymatrix{
& \wt{X} \ar[dl]_{\alpha} \ar[dr]^{\beta} &\\
X \ar@{-->}[rr]^\phi & &  Y  
}
\end{equation}
with both $\alpha$ and $\beta$ being compositions of blow ups
with smooth centers $Z_1, \dots, Z_r$ and $Z_1', \dots, Z_{s}'$ respectively, which are 
zero-dimensional smooth schemes. 
The \emph{factorization center} of $\phi$ is defined as
\begin{equation}\label{eq:def-c}
c(\phi) = \sum_{i=1}^{r} [Z_i] - \sum_{i=1}^{s} [Z_i'] \in \Z[\Var^0/k]
\end{equation}
(see \S \ref{subsec:L-eq} for the definition of $\Z[\Var^0/k]$).
We remark that $\L\cdot c(\phi)$,
regarded as an element of $K_0(\Var/k)$,
measures the difference between classes $[Y]$ and $[X]$ in the Grothendieck ring of varieties, see Lemma \ref{lem:c-Groth}.
Given that $\L$ is a zero-divisor in the Grothendieck ring, 
$c(\phi)$  potentially contains
more information than $\L \cdot c(\phi)$.

\medskip

We explain the well-definedness of $c$
and its basic properties. 
To do that it is most convenient to consider the
groupoid $\Bir_2/k$
of birational types
of surfaces, whose objects are smooth
projective surfaces and morphisms are birational
isomorphisms.

Recall that if $\CC$ is a groupoid,
and $G$ a group, a homomorphism 
from $\CC$ to $G$ is a functor
from $\CC$ to $G$, where $G$ is considered
as a groupoid
with one object.

\begin{lemma}\label{lem:c-add}
$c(\phi)$ does not depend on the
choice of factorization of $\phi$ and 
defines a homomorphism
$c: \Bir_2/k \to \Z[\Var^0/k]$.
Explicitly, for any two birational isomorphisms of surfaces $\phi: X \dashrightarrow X'$,
$\psi: X' \dashrightarrow X''$ we have
\[
c(\psi \circ \phi) = c(\psi) + c(\phi).
\]
In particular, for any surface $X$ we have a homomorphism
\[
c: \Bir(X) \to \Z[\Var^0/k]. 
\]
\end{lemma}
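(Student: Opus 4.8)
The plan is to establish Lemma~\ref{lem:c-add} in two stages: first prove that $c(\phi)$ is independent of the chosen strong factorization~\eqref{eq:phi}, and then deduce additivity under composition, from which the functoriality statement follows formally.

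\medskip

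\textit{Well-definedness.} Suppose we are given two strong factorizations of $\phi: X \dashrightarrow Y$, say through $\wt X$ (with blow-up centers $Z_1,\dots,Z_r$ on the $X$-side and $Z_1',\dots,Z_s'$ on the $Y$-side) and through $\wt X'$ (with centers $W_1,\dots,W_p$ and $W_1',\dots,W_q'$). I would dominate both by a common smooth surface: choose a smooth projective $\wh X$ admitting birational morphisms to both $\wt X$ and $\wt X'$ (e.g.\ resolve the graph of $\wt X \dashrightarrow \wt X'$, or simply apply strong factorization again). By the structure of birational morphisms of smooth projective surfaces over a perfect field --- every such morphism factors as a composition of blow-ups at connected smooth $0$-dimensional subschemes --- the morphism $\wh X \to \wt X$ is a composition of blow-ups at centers $T_1,\dots,T_m$, and likewise $\wh X \to X$ (which equals $\wh X \to \wt X \to X$) is a composition of blow-ups at $T_1,\dots,T_m$ together with $Z_1,\dots,Z_r$. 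The key point is that the \emph{multiset} of centers appearing in any factorization of a fixed birational morphism $\wh X \to X$ into blow-ups is independent of the factorization: this is because the classes $[\text{exceptional divisor}]$ and the ranks/Galois structure of the relative Picard group are determined, and more elementarily because each blow-up at a connected smooth $0$-dimensional $Z$ contributes a copy of $[Z]$ in a way that can be read off from the geometry of the morphism (the centers are the images of the exceptional curves, counted with Galois structure, under the successive contractions). Granting this, $\sum [T_i] + \sum[Z_i]$ and $\sum[T_i] + \sum[W_j]$ both compute the "center content" of $\wh X \to X$, so $\sum[Z_i] = \sum[W_j]$; similarly $\sum[Z_i'] = \sum[W_j']$ on the $Y$-side. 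Subtracting gives $c(\phi)$ well-defined. I would isolate the multiset-invariance of blow-up centers as a preliminary lemma (or cite it from the two-dimensional MMP / Manin's book).

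\medskip

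\textit{Additivity.} Given $\phi: X \dashrightarrow X'$ and $\psi: X' \dashrightarrow X''$, pick strong factorizations $X \xleftarrow{\alpha} \wt X \xrightarrow{\beta} X'$ and $X' \xleftarrow{\gamma} \wt X' \xrightarrow{\delta} X''$. Form the fiber product resolution: let $\wh X$ be a smooth projective surface dominating both $\wt X$ and $\wt X'$ over $X'$ (resolve $\wt X \times_{X'} \wt X'$, or use strong factorization of the induced birational map $\wt X \dashrightarrow \wt X'$). Then $\wh X \to X$ (via $\wt X$) and $\wh X \to X''$ (via $\wt X'$) realize $\psi \circ \phi$ by a strong factorization, and by the center-counting from the previous paragraph the total blow-up centers on the $X$-side are those of $\alpha$ plus those of $\wh X \to \wt X$, while those on the $X''$-side are those of $\delta$ plus those of $\wh X \to \wt X'$. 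The centers of $\wh X \to \wt X$ and of $\wh X \to \wt X'$, together with the centers of $\beta$ and $\gamma$ respectively, must agree (both resolve the birational map $\wt X \dashrightarrow \wt X'$ which is $\gamma^{-1}\beta$ up to... more precisely, $\wh X \to \wt X$ composed with $\beta$ and $\wh X \to \wt X'$ composed with $\gamma$ are the two legs of a strong factorization of the identity-adjacent map, hence have equal center content). Bookkeeping then yields
\[
c(\psi \circ \phi) = \Big(\textstyle\sum_{\alpha} + \sum_{\wh X \to \wt X}\Big) - \Big(\sum_{\delta} + \sum_{\wh X \to \wt X'}\Big) = \Big(\sum_\alpha - \sum_\beta\Big) + \Big(\sum_\gamma - \sum_\delta\Big) = c(\phi) + c(\psi).
\]
Finally, $c(\id_X) = 0$ is immediate (take the trivial factorization), and $c(\phi^{-1}) = -c(\phi)$ follows by swapping the two legs of a factorization; together with additivity this says $c$ is a functor from the groupoid $\Bir_2/k$ to the group $\Z[\Var^0/k]$, and restricting to $\Aut$-objects gives the homomorphism $c: \Bir(X) \to \Z[\Var^0/k]$.

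\medskip

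\textit{Main obstacle.} The crux is the invariance of the multiset of blow-up centers under refactorization of a birational \emph{morphism} of smooth projective surfaces, and its compatibility with commuting the two legs of a strong factorization past a common refinement. Over an algebraically closed field this is classical (the number of blow-ups equals the Picard rank drop, and the centers are recovered as images of exceptional curves); over a perfect field one must additionally track the Galois action, i.e.\ verify that a connected smooth $0$-dimensional center $Z$ appearing in one factorization appears, $k$-isomorphically, in any other. I expect this to follow from the fact that the exceptional locus of $\wh X \to X$, decomposed into $(-1)$-curve "towers," is canonically a union of $G_k$-stable chains whose terminal images are exactly the $Z_i$ with their $k$-structure; this is essentially Iskovskikh-style two-dimensional birational geometry, and I would either prove it directly by induction on the number of blow-ups or cite the strong factorization literature. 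Everything else is formal diagram-chasing in the groupoid.
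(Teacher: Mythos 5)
Your overall strategy is sound and your additivity argument is essentially the paper's: the paper also forms a third surface resolving the map $\wt X\dashrightarrow \wt X'$ between the two intermediate resolutions and does the same bookkeeping. Two remarks on the well-definedness part. First, there is a slip: the morphisms $\wh X\to\wt X$ and $\wh X\to\wt X'$ have \emph{different} center multisets, say $\{T_i\}$ and $\{T'_i\}$, so comparing the two factorizations of $\wh X\to X$ only gives $\sum[T_i]+\sum[Z_i]=\sum[T'_i]+\sum[W_j]$, not $\sum[Z_i]=\sum[W_j]$ (which is false in general, e.g.\ if $\wt X'$ is a further blow-up of $\wt X$). This is harmless for the lemma because only the difference matters: writing the analogous identity for the two factorizations of $\wh X\to Y$ and subtracting kills the $T$-terms and yields $\sum[Z_i]-\sum[Z'_i]=\sum[W_j]-\sum[W'_j]$, which is exactly what is needed; but you should make that correction.

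Second, the step you flag as the crux --- invariance of the (class of the) multiset of centers under refactorization of a birational \emph{morphism} --- is true and your sketch points in the right direction, but the paper closes it with a one-line device that also makes the common-domination step unnecessary: every prime divisor $D\subset\wt X$ contracted by one of the two legs is birational to $\P^1\times Z$, where $Z$ is the center of the corresponding blow-up, and $Z$ is recovered intrinsically as $Z\simeq\Spec(H^0(D,\OO_D))$. Hence $\sum[Z_i]$ (resp.\ $\sum[Z'_i]$) equals the sum of $[\Spec(H^0(D,\OO_D))]$ over the prime divisors contracted by $\alpha$ (resp.\ $\beta$); the divisors contracted by both legs cancel in the difference, and what remains is indexed by the exceptional divisors of $\phi$ and $\phi^{-1}$, which are intrinsic to $\phi$. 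This gives well-definedness directly from a single factorization, with the Galois structure of each center carried by $H^0(D,\OO_D)$, and is the cleanest way to discharge the preliminary lemma you propose to isolate.
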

\begin{proof}
Consider the diagram \eqref{eq:phi}.
Let $E_1, \dots, E_m \subset X$ (resp. $E_1', \dots, E_n' \subset Y$)
be the irreducible components of the exceptional divisor of $\phi$ (resp. $\phi^{-1}$).
Let $D_1, \dots, D_t \subset \wt{X}$ 
be the irreducible divisors
which are contracted by both $\alpha$ and $\beta$. 
This way the centers $Z_i, \; i = 1, \dots, r$ 
(resp. $Z_i', \; i = 1, \dots, r'$)
are in one-to-one correspondence
with the 
collection of
divisors $\{E_i\}_{i = 1, \dots, m} \cup \{D_j\}_{j = 1, \dots, t}$
(resp. $\{E'_i\}_{i = 1, \dots, n}\cup \{D_j\}_{j = 1, \dots, t}$).

Note that 
each prime divisor $D$ contracted by
 $\alpha$ or $\beta$ is birational to $\P^1 \times Z$,
where $Z$ is the center of the corresponding blow up,
which can be recovered from $D$ as $Z \simeq \Spec(H^0(D, \OO_D))$.
Cancelling out the centers corresponding to
$D_1, \dots, D_t$ we obtain
\[\begin{aligned}
c(\phi) = \sum_{i=1}^{r} [Z_i] - \sum_{i=1}^{s} [Z_i'] = 
\sum_{i=1}^{m} [\Spec(H^0(E_i, \OO))] - \sum_{i=1}^{n} [\Spec(H^0(E_i', \OO))] \\
\end{aligned}\]
thus the expression \eqref{eq:def-c}
only depends on the exceptional divisors of $\phi$
and $\phi^{-1}$, hence $c(\phi)$ is independent of
the choice of strong factorization \eqref{eq:phi}.

To show that $c(\psi \circ \phi) = c(\psi) + c(\phi)$
consider the diagram
			$$
		\begin{tikzcd}[cramped]
		& &  X_3 \ar[dl, "\mu", swap] \ar[dr, "\mu'"]  & & \\
		&  X_1 \ar[dl, "\tau", swap] \ar[dr, "\tau'"] &  &  X_2 \ar[dl, "\nu", swap]   \ar[dr, "\nu'"] & \\
		X \ar[rr, dashed, "\phi"] &  & X' \ar[rr, dashed, "\psi"] & & X'' 
		\end{tikzcd}
		$$
where $\tau$, $\tau'$ (resp. $\nu$, $\nu'$) provide a strong
factorization for $\phi$ (resp. $\psi$),
and $\mu$, $\mu'$ provide a strong factorization
for $\nu^{-1} \tau'$.
It is clear that $c$ is additive on regular
birational isomorphisms, hence
		\begin{equation*}
		\begin{split}
		c(\psi \circ \phi) & = c(\nu' \circ \mu') - c(\tau \circ \mu) \\
		& = c(\nu' \circ \mu') - 
		c(\nu \circ \mu') + c (\tau' \circ \mu) - c(\tau \circ \mu) \\
		 & = c(\nu') - c(\nu) + c(\tau') - c(\tau) = c(\psi) + c(\phi).
		\end{split}
		\end{equation*}
\end{proof}

\begin{corollary}\label{cor:c-axioms}
There is a unique assignment $\phi \mapsto c(\phi) \in \Z[\Var^0/k]$ 
defined for all birational isomorphisms $\phi$ 
between smooth projective $k$-surfaces, satisfying
the following axioms
\begin{enumerate}[(i)]
    \item For any (biregular) 
    isomorphism $\phi$, $c(\phi) = 0$
    \item If $\wt{X}$ is a blow up of $X$ with smooth connected center 
    $Z$ and $\phi: \wt{X} \to X$ the contraction map, then
    \[
    c(\phi) = -[Z], \; c(\phi^{-1}) = [Z]
    \]
    \item For composable birational isomorphisms
    \[
    c(\psi \circ \phi) = c(\psi) + c(\phi).
    \]
\end{enumerate}
\end{corollary}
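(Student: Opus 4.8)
The plan is to deduce Corollary~\ref{cor:c-axioms} directly from Lemma~\ref{lem:c-add}, which already establishes the existence of an assignment $\phi \mapsto c(\phi)$ satisfying axioms (i)--(iii): axiom (iii) is the additivity statement proved in Lemma~\ref{lem:c-add}; axiom (i) follows by taking $\phi = \psi = \id$ in (iii), which forces $c(\id) = 0$, and then for a biregular isomorphism $\phi$ the factorization \eqref{eq:phi} can be taken with $\wt{X} = X$ and $\alpha = \id$, $\beta = \phi$, so no centers are blown up or down and $c(\phi) = 0$; and axiom (ii) is immediate from the definition \eqref{eq:def-c}, since for the blow-down $\phi\colon \wt{X}\to X$ the trivial factorization has $\beta = \id$, $\alpha$ the single blow up with center $Z$, giving $c(\phi) = -[Z]$, and the computation for $\phi^{-1}$ is symmetric. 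Thus existence is already in hand, and only uniqueness requires an argument.

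For uniqueness, I would argue as follows. Suppose $c'$ is any assignment satisfying (i)--(iii). Given an arbitrary birational isomorphism $\phi\colon X \dashrightarrow Y$ between smooth projective $k$-surfaces, choose a strong factorization \eqref{eq:phi}, so that $\phi = \beta \circ \alpha^{-1}$ with $\alpha\colon \wt{X}\to X$ and $\beta\colon \wt{X}\to Y$ each a finite composition of blow ups at smooth connected zero-dimensional centers. By axiom (iii), $c'(\phi) = c'(\beta) + c'(\alpha^{-1})$. Each of $\alpha$ and $\beta$ factors as a finite chain of single blow downs $X_j \to X_{j-1}$, and applying (iii) repeatedly expresses $c'(\alpha)$ (resp. $c'(\beta)$) as the sum of the values of $c'$ on these individual contractions; by axiom (ii) each such value is $-[Z]$ for the corresponding center $Z$. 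Hence $c'(\alpha^{-1}) = \sum_i [Z_i]$ and $c'(\beta) = -\sum_i [Z_i']$, so $c'(\phi) = \sum_i [Z_i] - \sum_i [Z_i'] = c(\phi)$. This shows $c' = c$, establishing uniqueness.

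The one genuinely non-trivial point — and the main obstacle — is that the existence of such a factorization at all, together with the consistency of the formula, is exactly the content of Lemma~\ref{lem:c-add}: a priori the right-hand side of \eqref{eq:def-c} could depend on the chosen factorization, and the uniqueness argument above implicitly uses that $c$ is well-defined (so that the value computed from any factorization agrees). Since Lemma~\ref{lem:c-add} supplies precisely this, the proof of the corollary is essentially bookkeeping: one need only observe that every birational isomorphism of surfaces admits a strong factorization (invoked via \cite[Corollary of Lemma III.4.4]{Manin}) and that any assignment obeying (i)--(iii) is forced to take the value \eqref{eq:def-c} on it. I would therefore present the proof as a short two-step argument: first note that $c$ from Lemma~\ref{lem:c-add} satisfies (i)--(iii), then run the uniqueness computation above.
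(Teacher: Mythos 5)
Your proposal is correct and follows essentially the same route as the paper, which likewise derives existence from Lemma~\ref{lem:c-add} and uniqueness from the strong factorization \eqref{eq:phi}. Your write-up simply makes explicit the bookkeeping that the paper leaves to the reader.
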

\begin{proof}
Uniqueness follows from the strong factorization \eqref{eq:phi},
and existence is the content of Lemma \ref{lem:c-add}.
\end{proof}

\begin{remark}
The invariant $c(\phi)$ is generalized to higher-dimensional varieties
in \cite{BirMot},
to the equivariant setting in \cite{KreschTschinkel}
and to varieties with a logarithmic form in \cite{CLKT}.
In all these contexts properties of $c(\phi)$ are analogous to those stated in
Corollary \ref{cor:c-axioms}.
\end{remark}

\begin{proposition}\label{prop:c-Galois}
\begin{enumerate}[(i)]
    \item For any field extension $L/k$ we have a commutative diagram
\[\xymatrix{
\Bir(X,Y) \ar[d] \ar[r]^c & \Z[\Var^0/k] \ar[d] \ar[r]^\simeq_{\eqref{eq:burn-galois}} & \Burn(G_k) \ar[d] \\
\Bir(X_L,Y_L) \ar[r]^c & \Z[\Var^0/L] 
\ar[r]^\simeq_{\eqref{eq:burn-galois}} & \Burn(G_L) \\
}\]
where the left vertical arrow is an extension of scalars
for birational map, the middle vertical arrow
is defined by extension of scalars, that is it
maps an \'etale $k$-scheme $Z$ to the sum of the connected
components of $Z \times_k L$,
and the right vertical arrow is 
the restriction of the group action
defined through the map  
$\Gal(\bar{L}/L) \to \Gal(\bar{k}/k)$ 
induced by any choice of embedding 
$\bar{k} \hto \bar{L}$ compatible with $k \hto L$.

In particular, applying the diagram to an automorphism
$\sigma \in \Aut(L/k)$
the left square gives
$c(\sigma(\phi)) = \sigma(c(\phi))$, that is
$c$ commutes with 
the group action by the automorphisms of the field.

\item For any finite field extension $L/k$ and
surfaces $X$, $Y$ over $L$, 
let $X|_k$, $Y|_k$ denote the underlying $k$-surfaces.\footnote{Here by $X|_k$
we mean the $k$-surface given by the composition
$X \to \Spec(L) \to \Spec(k)$ (this is not the Weil restriction
of scalars). If $L/k$ is nontrivial, the 
surface $X|_k$ is not geometrically connected:
$X|_k \times_k L$ is isomorphic to
a disjoint union of $[L:k]$ copies of $X$.}
We have the following commutative diagram:
\[\xymatrix{
\Bir(X,Y) \ar[d] \ar[r]^c & \Z[\Var^0/L] \ar[d] \ar[r]^\simeq_{\eqref{eq:burn-galois}} & \Burn(G_L) \ar[d] \\
\Bir(X|_k,Y|_k) \ar[r]^c & \Z[\Var^0/k]
\ar[r]^\simeq_{\eqref{eq:burn-galois}} & \Burn(G_k) \\
}\]
where the middle vertical map is restriction of scalars.
\end{enumerate}
\end{proposition}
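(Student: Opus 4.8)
The plan is to verify, in each case, that the strong factorization \eqref{eq:phi}, hence the formula \eqref{eq:def-c}, is compatible with the base-change operation in question, and then to identify the resulting maps on Burnside rings with restriction, respectively induction, of Galois sets; the commutativity of the two right-hand squares will be an instance of Galois descent.

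\emph{Part (i).} Fix a strong factorization $X \xleftarrow{\alpha} \wt{X} \xrightarrow{\beta} Y$ of $\phi$ over $k$ as in \eqref{eq:phi}, with $\alpha$ (resp. $\beta$) a composition of blow-ups at smooth zero-dimensional centers $Z_1,\dots,Z_r$ (resp. $Z_1',\dots,Z_s'$). Since the formation of a blow-up commutes with flat base change, base-changing this diagram along $\Spec L \to \Spec k$ produces a strong factorization of $\phi_L \colon X_L \dashrightarrow Y_L$ whose centers are the $(Z_i)_L$ and the $(Z_j')_L$; these remain smooth and zero-dimensional over $L$ (though in general disconnected), because smoothness and relative dimension are preserved by base change. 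Hence $c(\phi_L) = \sum_i [(Z_i)_L] - \sum_j [(Z_j')_L]$ in $\Z[\Var^0/L]$, and as the middle vertical map sends $[Z_i]$ to the sum of the classes of the connected components of $(Z_i)_L$, the left-hand square commutes. For the right-hand square, one checks that \eqref{eq:burn-galois} intertwines extension of scalars with restriction of the Galois action along $G_L \to G_k$: for an étale $k$-scheme $Z$, the $G_L$-set $(Z_L)_{\bar{L}}$ is canonically the restriction of the $G_k$-set $Z_{\bar{k}}$ along $G_L \to G_k$, and connected components over $L$ correspond to $G_L$-orbits. Finally, the assertion about $\sigma \in \Aut(L/k)$ follows by the same argument applied to the automorphism $\Spec \sigma$ of $\Spec L$ over $\Spec k$: it carries a strong factorization of $\phi$ to one of $\sigma(\phi)$ with centers $\sigma(Z_i)$, $\sigma(Z_j')$, so $c(\sigma(\phi)) = \sigma(c(\phi))$ by \eqref{eq:def-c}.

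\emph{Part (ii).} The key point is that the blow-up of a smooth subscheme depends only on its ideal sheaf, not on the base field; so a strong factorization of $\phi \colon X \dashrightarrow Y$ over $L$ with centers $Z_i$, $Z_j'$ is, after applying $(-)|_k$, literally a strong factorization of $\phi \colon X|_k \dashrightarrow Y|_k$ over $k$, with centers $Z_i|_k$, $Z_j'|_k$. Here one uses that $L/k$ is finite separable ($k$ being perfect), so $\Spec L \to \Spec k$ is étale; hence each $Z_i|_k$ is again étale — in particular smooth and zero-dimensional — over $k$, and $X|_k$, $Y|_k$ are again connected smooth projective $k$-surfaces. Thus $c(\phi \colon X|_k \dashrightarrow Y|_k) = \sum_i [Z_i|_k] - \sum_j [Z_j'|_k]$ is the image of $c(\phi)$ under restriction of scalars, giving the left square. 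For the right square, I would spell out that under \eqref{eq:burn-galois} restriction of scalars corresponds to induction of $G$-sets: fixing compatible embeddings $L \hookrightarrow \bar{k} = \bar{L}$, the natural $G_k$-equivariant projection $(Z|_k)_{\bar{k}} \to (\Spec L)_{\bar{k}} \simeq G_k/G_L$ has fiber $Z_{\bar{L}}$ over the base point, identifying $(Z|_k)_{\bar{k}}$ with the induced $G_L$-set $G_k \times_{G_L} Z_{\bar{L}}$; commutativity then says precisely that restriction of scalars matches induction of Galois sets.

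\emph{Main obstacle.} I do not anticipate a genuine obstacle; the proof is careful bookkeeping. The two points requiring some care are that base change may disconnect the centers in part (i) — handled by the definition of the middle vertical arrow as a sum over connected components — and the identification of the right-hand-square maps with restriction and induction of Galois sets, both instances of Galois descent.
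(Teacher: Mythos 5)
Your argument is correct and is essentially the paper's proof written out in full: the paper reduces to a single blow up via the additivity of $c$ (Lemma \ref{lem:c-add}) and declares the statement clear there, while you carry out the same check on an entire strong factorization — blow-ups commute with flat base change for (i), and a blow-up over $L$ is literally a blow-up over $k$ for (ii) — together with the standard identification of extension/restriction of scalars with restriction/induction of Galois sets under \eqref{eq:burn-galois}. No gap; your version simply supplies the bookkeeping the paper omits.
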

\begin{proof}
By Lemma \ref{lem:c-add},
in both (i), (ii) it is sufficient to check
a single blow up where the statements are clear.
\end{proof}

The main result of the paper is the following.

\begin{theorem}\label{thm:main}
For any two smooth projective $k$-surfaces $X$, $Y$
and any two birational isomorphisms $\phi, \psi: X \dashrightarrow Y$,
we have $c(\phi) = c(\psi)$.
In particular, $c: \Bir(X) \to \Z[\Var^0/k]$ is a zero map.
\end{theorem}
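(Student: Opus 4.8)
### Proof plan for Theorem \ref{thm:main}

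The plan is to prove the stronger statement that $c$ is \emph{constant} on each Hom-set $\Bir(X,Y)$ of the groupoid $\Bir_2/k$; this implies the theorem because $c(\mathrm{id}_X) = 0$ and hence $c(\phi) = c(\psi)$ for all $\phi,\psi \in \Bir(X,Y)$, and in particular $c$ vanishes on $\Bir(X) = \Bir(X,X)$. By Lemma \ref{lem:c-add}, constancy on $\Bir(X,Y)$ is equivalent to $c(\psi^{-1}\circ\phi) = 0$ for all $\phi,\psi$, i.e. $c$ vanishes on the automorphism group $\Bir(X)$ of \emph{every} surface $X$; so it suffices to show $c|_{\Bir(X)} = 0$ for each birational type of surface. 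The first reduction is therefore to run through the Enriques--Kodaira-type classification of surfaces over a perfect field, or rather its output under the two-dimensional MMP: every surface is birational to either (a) a surface with a unique minimal model (non-geometrically-rational, non-geometrically-ruled case, where $\Bir(X)$ preserves the minimal model and the argument is essentially trivial, possibly after base change to handle non-geometrically-connected $X$ via Proposition \ref{prop:c-Galois}(ii)), (b) a geometrically ruled surface over a curve of positive genus, or (c) a geometrically rational surface. Cases (a) and (b) should be disposed of by elementary arguments: in (a) any $\phi \in \Bir(X)$ fits over an isomorphism of minimal models so $c(\phi) = 0$ by Corollary \ref{cor:c-axioms}, and in (b) one controls the relatively minimal conic bundle / ruled structure over the base curve.

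The substantial case is (c), geometrically rational surfaces, and here the strategy is: by Iskovskikh's factorization theorem \cite{IskovskikhFact}, the groupoid of birational maps between minimal geometrically rational surfaces is generated by Sarkisov links (plus isomorphisms), so it is enough to verify a single uniform statement for each link in Iskovskikh's finite classification. That uniform statement (Proposition \ref{prop:geom-rat-5}) compares $c(\phi)$, for a link $\phi$, with the difference of the virtual N\'eron--Severi Galois sets of source and target; the point is that $c(\phi)$ lands in $\Burn(G_k) = \Ker$ of nothing in particular, but the combination of $c(\phi)$ with the N\'eron--Severi sets lands in $\Ker(\mu_{G_k})$. Concretely: from the blow-up presentation, $[\widetilde X] = [X] + \L\sum[Z_i]$ and likewise for $Y$, so the composite link $\phi$ gives a relation among classes in $K_0(\Var/k)$; applying $\mu_\et$ and using Example \ref{ex:mu-et-X} together with the computation \eqref{eq:mu-et-mu-G} and \eqref{eq:mu-L}, the N\'eron--Severi-set contributions twisted by $\bQ_\ell(-1)$ cancel against the center contributions twisted by $\bQ_\ell(-1)$, leaving a $G_k$-set identity. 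Thus $c(\phi)$ differs from an explicit difference of N\'eron--Severi sets by an element of $\Ker(\mu_{G_k})$; the virtual N\'eron--Severi sets one writes down for the links in Iskovskikh's list will be ones where this difference is \emph{already} zero or of small order, so by Corollary \ref{cor:Gassmann} (elements of $\Ker(\mu_{G_k})$ of size $\le 5$ vanish) one gets $c(\phi) = 0$, or at least reduces to finitely many links where a direct check is needed.

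The main obstacle is exactly this last case-by-case verification: Iskovskikh's classification of links between minimal geometrically rational surfaces is long, and for the ``models of large degree'' ($K_X^2 \ge 5$) there are many link types; for each one must identify the correct linear systems of low-degree rational curves (pencils of conics, nets of twisted cubics, etc.) defining the virtual N\'eron--Severi sets, compute how the Galois action permutes them on both sides of the link, and check the Gassmann-type identity reduces to the trivial one via the small-degree rigidity of Proposition \ref{prop:Gassmann}/Corollary \ref{cor:Gassmann}. The links that survive the ``size $\le 5$'' bound are precisely the ones where nontrivial Gassmann equivalence could a priori occur (degree-$6$ transitive or size-$6$ non-transitive sets, cf. Example \ref{ex:gassmann}), and for those one needs a genuinely geometric argument---tracking the actual isomorphism type of the zero-dimensional center through the link rather than just its class---to conclude $Z_i \simeq Z_i'$ over $k$. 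This is where the bulk of \S\ref{sec:birsurf} and \S\ref{sec:geom-rat-5} will be spent; the rest of the proof is formal manipulation of the homomorphism $c$ and of $\mu_\et$.
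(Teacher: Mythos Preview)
Your plan is correct and matches the paper's approach: reduce to geometrically irreducible minimal models (via Proposition \ref{prop:c-Galois}(ii) and contraction), dispose of the non-geometrically-rational and the low-degree ($K_X^2 \le 4$) geometrically rational cases by the elementary Proposition \ref{prop:c-nongeomrat}, and for models of large degree verify \eqref{eq:c-MM} link by link, using the Gassmann bound (Corollary \ref{cor:Gassmann}) together with direct geometric identification of centers (Proposition \ref{prop:hilbert-scheme}(ii),(iv)) for the handful of type IID links with $K_Y^2 \in \{3,4\}$. The only imprecision in your outline is scope: Proposition \ref{prop:geom-rat-5} and the virtual N\'eron--Severi sets of Definition \ref{def:MX} are set up \emph{only} for degree $\ge 5$, and Iskovskikh's classification keeps the two regimes $K_X^2 \le 4$ and $K_X^2 \ge 5$ separate, so the low-degree geometrically rational case is handled by Proposition \ref{prop:c-nongeomrat}(2) (isomorphisms, type IIC links, Bertini/Geiser involutions, blow-up of a rational point) rather than by the N\'eron--Severi-set formalism.
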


We prove Theorem \ref{thm:main} at the end of \S \ref{sec:geom-rat-5}.
The result is straightforward when $k$ is algebraically closed,
because in this case the invariant
takes values in $\Z\cdot[\Spec(k)] \simeq \Z$,
and measures the
difference of the ranks
of two N\'eron-Severi groups
(cf Proposition \ref{prop:c-nongeomrat}).
However, Theorem \ref{thm:main} 
becomes a nontrivial statement when $k$ is an arbitrary perfect field,
and its proof depends on the two-dimensional
Minimal Model Program.
This result is also specific for surfaces and
fails to be true in higher dimension, even over
algebraically closed fields 
\cite{BirMot}.

\begin{example}\label{ex:R1}
Let $k = \R$. In this case Theorem \ref{thm:main} says
that birational automorphisms of surfaces
blow up the same number of rational points as 
the number of
rational divisors they blow down, 
and that they blow up the same number
of pairs of complex conjugate points as the
number of
pairs of complex conjugate divisors they blow down.
This can be proved directly by considering
the Galois action on the N\'eron-Severi group
(see also Example \ref{ex:R-Fq}).
\end{example}

\begin{remark}
A different proof of Theorem \ref{thm:main}
in the case when the surfaces $X$, $Y$
are rational can be deduced from a more recent result by Lamy and Schneider~\cite{LamySchneider} 
who analyze relations between
Sarkisov links to prove that 
$\Cr_2(k)$ is generated by involutions. This implies that every homomorphism from $\Cr_2(k)$ to a free abelian group is trivial
(for $k = \R$ one can also deduce this from 
\cite[Theorem 1.1]{Zim}).
On the other hand this is not true for other types
of del Pezzo surfaces:
Blanc, Schneider and Yasinsky
have constructed
nontrivial homomorphisms
$\Bir(X) \to \Z$
for Severi-Brauer surfaces $X$
\cite{BSY}.
\end{remark}

\begin{example}\label{ex:dP5-links}
Consider
the following composition $\phi$ 
of type $II$ links (see Definition \ref{def:links} for links)
between del Pezzo surfaces $dP_{d}$ (where $d = K^2$ stands for the degree) 
\[
\xymatrix{
 &  & dP_7\ar[dr]^{\Bl_{1}} \ar[dl]_{\Bl_{2}} &   & dP_3 \ar[dr]^{\Bl_{2}} \ar[dl]_{\Bl_{5}}&   & dP_4 \ar[dr]^{\Bl_{5}} \ar[dl]_{\Bl_{1}}&  \\
\phi: & \P^2 \ar@{-->}[rr] &   & dP_8 \ar@{-->}[rr]  &  & dP_5 \ar@{-->}[rr] &   & \P^2 \\
}
\]
where each map $\Bl_i$ is the blow up (resp. blow down) along a Galois orbit $Z_i$ (resp. $Z'_i$) of degree $i$.
See \cite{IskovskikhFact} for the general
results on links, or see the explicit constructions
explained below for the existence of the three links above.
We have
\[
c(\phi) = [Z_2]-[k] + [Z_5] - [Z_2'] + [k]-[Z_5'] = [Z_2]-[Z_2'] + [Z_5]-[Z_5'],
\]
so that
$c(\phi) = 0$ 
is equivalent to
$Z_2 \simeq Z_2' \text{ and } Z_5 \simeq Z_5'$.

One way to understand this is to read 
the diagram as a composition of two different well-known
rationality
constructions for del Pezzo surace $dP_5$: one blows up
a point $P$ and contracts a Galois orbit of five $(-1)$-curves
which are proper preimages of conics through $P$  
(connecting $dP_5$ with $\P^2$ moving right)
or one blows up a Galois orbit
of two points $Q_1$, $Q_2$
and contracts five $(-1)$-curves obtained
as proper preimages of 
cubics 
passing through $Q_1$, $Q_2$
onto a rational quadric $dP_8$ and then transforms
it to $\P^2$ (connecting $dP_5$ with $\P^2$ moving left).
This way $dP_5$ has two potentially different rationality
centers that can be blown up by $\P^2 \dashrightarrow dP_5$,
namely $Z_5$ and $Z_5'$ and the result is that these
centers are in fact isomorphic (see Corollary \ref{cor:rat-centers} and Example \ref{ex:dP5-center}).
\end{example}

\subsection{Interpretation in terms of the Grothendieck ring of varieties}\label{subsec:Groth}
\hfill

From the perspective of the Grothendieck
ring of varieties, we have
the following interpretation of $c(\phi)$.

\begin{lemma}\label{lem:c-Groth}
For any birational isomorphism $\phi: X \dashrightarrow Y$ between
smooth projective surfaces
we have the following identity in $K_0(\Var/k)$
\begin{equation}\label{eq:groth}
[Y] = [X] + \L \cdot \ol{c}(\phi), 
\end{equation}
where $\ol{c}$ is the composition
of $c$
with the natural homomorphism \eqref{eq:fields-to-K0vars} $\Z[\Var^0/k] \to K_0(\Var/k)$.
\end{lemma}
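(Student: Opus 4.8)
The plan is to use the strong factorization \eqref{eq:phi} of $\phi$ together with the blow-up relation in the Grothendieck ring, reducing everything to the single blow-up case by an induction. The key observation is that both legs $\alpha$ and $\beta$ of the strong factorization are compositions of blow-ups at smooth zero-dimensional centers, so the blow-up relation \cite[Theorem 3.1]{Bittner} (or simply the cut-and-paste relation \eqref{eq:cut-and-paste} applied to the exceptional divisor, which over a zero-dimensional center is a disjoint union of projective lines over the residue scheme) lets us compute $[\wt X]$ from $[X]$ and from $[Y]$ in two ways.

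First I would record the local computation: if $\pi : \wt W \to W$ is the blow-up of a smooth projective surface $W$ along a connected smooth zero-dimensional subscheme $Z$, then the exceptional divisor is a $\mathbb P^1$-bundle over $Z$, in fact isomorphic to $Z \times \mathbb P^1$, and $W \setminus Z \simeq \wt W \setminus E$. Hence in $K_0(\Var/k)$ one gets
\[
[\wt W] = [\wt W \setminus E] + [E] = [W \setminus Z] + [Z]\cdot[\mathbb P^1] = [W] - [Z] + [Z](1 + \L) = [W] + \L\cdot[Z].
\]
Iterating this along $\alpha$, which blows up $Z_1,\dots,Z_r$, gives $[\wt X] = [X] + \L\sum_{i=1}^r [Z_i]$, and iterating along $\beta$, which blows up $Z_1',\dots,Z_s'$, gives $[\wt X] = [Y] + \L\sum_{i=1}^s [Z_i']$. (One must be slightly careful that at each intermediate stage the center is still connected smooth zero-dimensional inside a smooth projective surface, which is exactly the setup guaranteed by the strong factorization theorem~\cite[Corollary of Lemma III.4.4]{Manin}; a clean way is to argue by induction on $r$, noting that a blow-up of a smooth projective surface along a zero-dimensional center is again a smooth projective surface.)

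Equating the two expressions for $[\wt X]$ yields
\[
[Y] + \L\sum_{i=1}^s [Z_i'] = [X] + \L\sum_{i=1}^r [Z_i],
\]
that is $[Y] = [X] + \L\bigl(\sum_{i=1}^r [Z_i] - \sum_{i=1}^s [Z_i']\bigr) = [X] + \L\cdot\ol c(\phi)$, by the definition \eqref{eq:def-c} of $c(\phi)$ and of the homomorphism \eqref{eq:fields-to-K0vars}. Since Lemma~\ref{lem:c-add} already shows $c(\phi)$ is independent of the chosen factorization, the right-hand side is well-defined, and the proof is complete.

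I do not anticipate a serious obstacle here: the only mild subtlety is making sure the exceptional divisor of a blow-up at a zero-dimensional center is genuinely a trivial $\mathbb P^1$-bundle $Z \times \mathbb P^1$ (so that $[E] = [Z]\cdot[\mathbb P^1]$ with no twisting), which holds because the center is regularly embedded of codimension two and the normal bundle, being a rank-two bundle on a zero-dimensional scheme, is automatically trivial after passing to the (Artinian, product of fields) structure; and to keep the induction honest by invoking the strong factorization so that one never leaves the world of smooth projective surfaces with smooth zero-dimensional blow-up centers.
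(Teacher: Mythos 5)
Your proposal is correct and follows essentially the same route as the paper: the paper reduces to a single blow-up using the additivity of $c$ under composition (Lemma~\ref{lem:c-add} and Corollary~\ref{cor:c-axioms}) and then invokes the blow-up formula $[\Bl_Z W]=[W]+\L[Z]$, while you simply unwind this by iterating that same formula along both legs of the strong factorization \eqref{eq:phi} and equating the two expressions for $[\wt X]$. Your side remark that $E\simeq Z\times\P^1$ because a rank-two bundle on an \'etale zero-dimensional scheme is trivial is exactly the justification implicit in the paper's appeal to the blow-up formula, so there is no gap.
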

\begin{proof}
Using Lemma \ref{lem:c-add} we see that
\eqref{eq:groth} is preserved
under compositions of birational isomorphisms.
Since birational isomorphisms
are decomposed
into blow ups and blow downs along $k$-étale subschemes,
it suffices to check Lemma~\ref{lem:c-Groth} for a single blow up,
where the result is clear
by Corollary \ref{cor:c-axioms}(2)
and the blow up formula in the Grothendieck ring.
\end{proof}

\begin{remark}\label{rem:L-eq}
We see that
$\L \cdot \ol{c}(\phi) \in K_0(\Var/k)$
only depends on $[X]$ and $[Y]$, 
however 
it is known that $\L$ is a zero-divisor \cite{Borisov}
so a priori
we cannot divide $\L \cdot \ol{c}(\phi)$ by $\L$ and deduce
that $\ol{c}(\phi)$ 
(or $c(\phi)$)
only depends
on $X$ and $Y$ but not on $\phi$.
\end{remark}

We can informally reformulate Theorem \ref{thm:main}
by the statement
that $0$-dimensional $k$-varieties
cannot be \emph{L-equivalent via surfaces}.
Let us explain this. 
For each $n \ge 0$
consider 
$K_0(\Var^{\le n}/k)$,
the abelian group generated
by isomorphism classes of varieties of dimension $\le n$,
modulo cut and paste relations \eqref{eq:cut-and-paste}.
For each
$n \le m$ there is a (generally
non-injective) group homomorphism
\[
K_0(\Var^{\le n}/k) \to K_0(\Var^{\le m}/k)
\]
and $K_0(\Var/k)$, as an abelian group,
is the colimit of this system.
For every $n$ there is a surjective homomorphism
\[
K_0(\Var^{\le n}/k) \to \Z[\Bir_{n}/k],
\]
where $\Bir_n/k$ is the set of $k$-birational classes
of dimension $n$.
The kernel of this homomorphism is spanned
by varieties of dimension $\le (n-1)$, but 
it may not
be isomorphic to $K_0(\Var^{\le n-1}/k)$;
see \cite{Zakharevich} for interpretation
of this in terms of an algebraic
$\Kt$-theory spectral sequence.

For $n=0$, $K_0(\Var^{\le 0}/k)$
is canonically isomorphic to $\Z[\Var^0/k]$,
and $K_0(\Var^{\le 1}/k)$ fits into a split exact
sequence
\begin{equation}\label{eq:K0Groth-1}
0 \to \Z[\Var^0/k] \to K_0(\Var^{\le 1}/k) \to
\Z[\Bir_1/k] \to 0.
\end{equation}

Our main results can be reformulated as results
about $K_0(\Var^{\le 2}/k)$:

\begin{corollary}[of Theorem \ref{thm:main}]\label{cor:main}
(i) We have a short exact sequence
\begin{equation}\label{eq:K0-2seq}
0 \to \Z[\Var^0/k] \oplus \Z[\Bir_1/k] \to
K_0(\Var^{\le 2}/k) \to \Z[\Bir_2/k] \to 0,
\end{equation}
where the last map sends a combination of varieties to the birational
classes of its $2$-dimensional components,
and the first map sends a $k$-variety $Z$ of dimension $0$ to $[Z]$
and a birational class of a curve to the class of its unique
smooth projective model.

(ii) We have a (noncanonical) splitting
\[
K_0(\Var^{\le 2}/k) \simeq 
\Z[\Var^0/k] \oplus \Z[\Bir_1/k] 
\oplus \Z[\Bir_2/k],
\]
and in particular
$K_0(\Var^{\le 2}/k)$ is a (torsion-)free abelian group.

(iii) If $Z$ and $Z'$ are \'etale $k$-schemes,
and \[\L^i \cdot ([Z] - [Z']) = 0\] in 
$K_0(\Var^{\le 2}/k)$  {with $i \in \{0,1,2\}$} 
(where $\L^i \cdot ([Z] - [Z'])$ is represented by $[\A^i \times_k Z] - [\A^i \times_k Z']$), then $Z$ and $Z'$ are isomorphic.
\end{corollary}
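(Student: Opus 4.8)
The plan is to identify $K_0(\Var^{\le 2}/k)$ with $\Z[\Var^0/k]\oplus\Z[\Bir_1/k]\oplus\Z[\Bir_2/k]$ compatibly with the maps in \eqref{eq:K0-2seq}, and then to deduce (iii) by projecting onto the appropriate summand. Write $\iota$ for the first map in \eqref{eq:K0-2seq} and $\pi_2$ for the last. The assignment $[W]\mapsto\sum(\text{birational classes of the }2\text{-dimensional irreducible components of }W)$ respects cut-and-paste (the $2$-dimensional components of $Z$ and of $W\setminus Z$ together exhaust those of $W$), so $\pi_2$ is a well-defined surjection killing $\mathrm{im}\,\iota$. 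For the reverse inclusion $\ker\pi_2\subseteq\mathrm{im}\,\iota$: every class in $K_0(\Var^{\le2}/k)$ is a combination of classes of smooth projective surfaces and of elements of $\mathrm{im}\,\iota$ (any variety of dimension $\le2$ is built by cut-and-paste from smooth projective surfaces and $\le1$-dimensional varieties, using resolution of singularities in dimension $\le2$, classical over a perfect field, and $\le1$-dimensional classes lie in $\mathrm{im}\,\iota$); given $x\in\ker\pi_2$, write $x=\sum_ia_i[S_i]+(\text{element of }\mathrm{im}\,\iota)$ with $S_i$ smooth projective surfaces, so the $a_i$ within each birational type sum to zero, and since $[S_i]-[S_j]=\L\cdot\ol{c}(\phi)\in\mathrm{im}\,\iota$ for birational $S_i,S_j$ (the identity of Lemma \ref{lem:c-Groth} holds already in $K_0(\Var^{\le2}/k)$) we get $\sum_ia_i[S_i]\in\mathrm{im}\,\iota$, hence $x\in\mathrm{im}\,\iota$. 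Therefore \eqref{eq:K0-2seq} is exact provided $\iota$ is injective, and then the section $c\mapsto[X_c]$ of $\pi_2$ ($X_c$ any smooth projective model of $c$) splits it, giving (ii) and the freeness of $K_0(\Var^{\le2}/k)$.

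It remains to show $\iota$ is injective, and this is the one place Theorem \ref{thm:main} is used. By \eqref{eq:K0Groth-1} this is equivalent to injectivity of the natural map $K_0(\Var^{\le1}/k)\to K_0(\Var^{\le2}/k)$. Using the blow-up presentation of the Grothendieck ring in dimension $\le2$ — available over any perfect field, resolution and factorization of birational maps being classical for surfaces — whose only nontrivial relations here are $[\Bl_Z S]=[S]+\L[Z]$ for $S$ a smooth projective surface and $Z$ a smooth closed point, one gets a presentation
\[
K_0(\Var^{\le2}/k)\ \simeq\ \bigl(\Z[\mathcal{S}]\oplus K_0(\Var^{\le1}/k)\bigr)\Big/\bigl\langle\,([\Bl_Z S]-[S],\,-\L[Z])\,\bigr\rangle,
\]
where $\mathcal{S}$ is the set of isomorphism classes of surfaces. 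Regard $\mathcal{S}$ as the vertex set of a graph $\Gamma$ with one edge $e_{S,Z}\colon[S]\to[\Bl_Z S]$ for each $(S,Z)$, and let $w\colon C_1(\Gamma)\to K_0(\Var^{\le1}/k)$ send $e_{S,Z}$ to $\L[Z]$. Then $\eta\in K_0(\Var^{\le1}/k)$ maps to $0$ precisely when $(0,\eta)$ lies in the relation subgroup, i.e.\ when $\eta=-w(c)$ for a $1$-cycle $c$; hence
\[
\ker\bigl(K_0(\Var^{\le1}/k)\to K_0(\Var^{\le2}/k)\bigr)\ =\ w\bigl(Z_1(\Gamma)\bigr).
\]
The cycle group $Z_1(\Gamma)=H_1(\Gamma)$ is generated by loops, and a loop $\lambda$ based at $[S]$ — upon composing the corresponding elementary blow-ups and blow-downs — produces a birational self-map $\phi_\lambda\in\Bir(S)$ with $w(\lambda)=\L\cdot\ol{c}(\phi_\lambda)$ (additivity of $c$, Corollary \ref{cor:c-axioms}). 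By Theorem \ref{thm:main}, $c(\phi_\lambda)=0$, so $w$ vanishes on every loop and the kernel is trivial. This proves (i) and (ii).

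For (iii), suppose $[\A^i\times_kZ]-[\A^i\times_kZ']=0$ in $K_0(\Var^{\le2}/k)$ for some $i\in\{0,1,2\}$ (the meaning of $\L^i\cdot([Z]-[Z'])=0$), and write $Z=\bigsqcup_j\Spec L_j$, $Z'=\bigsqcup_{j'}\Spec L'_{j'}$. For $i=0$: projecting $[Z]=[Z']$ onto the $\Z[\Var^0/k]$-factor of the decomposition from (ii) gives equality in the free abelian group $\Z[\Var^0/k]$, hence $Z\simeq Z'$. For $i=1$: $\A^1\times_kZ=\bigsqcup_j\A^1_{L_j}$ has dimension $1$, so the equality holds in $\mathrm{im}\,\iota$ and therefore, $\iota$ being injective, in $K_0(\Var^{\le1}/k)=\Z[\Var^0/k]\oplus\Z[\Bir_1/k]$; since $[\A^1_L]=[\P^1_L]-[\Spec L]$ and $[\P^1_L]$ lies in the $\Z[\Bir_1/k]$-factor of \eqref{eq:K0Groth-1}, the $\Z[\Var^0/k]$-component of $[\A^1\times_kZ]$ is $-[Z]$, so comparing components again gives $[Z]=[Z']$ in $\Z[\Var^0/k]$ and $Z\simeq Z'$. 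For $i=2$: applying $\pi_2$ to $[\A^2\times_kZ]=[\A^2\times_kZ']$ yields $\sum_j[\P^2_{L_j}]=\sum_{j'}[\P^2_{L'_{j'}}]$ in the free abelian group $\Z[\Bir_2/k]$ (as $\A^2_L$ is $k$-birational to $\P^2_L$); the algebraic closure of $k$ inside the function field $L(x,y)$ of $\P^2_L$ is $L$ (a purely transcendental extension of the finite extension $L/k$), so the $k$-birational class of $\P^2_L$ determines $L$, forcing the multisets $\{L_j\}$ and $\{L'_{j'}\}$ to agree, i.e.\ $Z\simeq Z'$.

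The main obstacle is the computation in the second paragraph: identifying the kernel of $K_0(\Var^{\le1}/k)\to K_0(\Var^{\le2}/k)$ with the subgroup generated by the classes $\L\cdot\ol{c}(\phi)$ of birational self-maps of smooth projective surfaces, which is exactly the obstruction that Theorem \ref{thm:main} kills. All the rest — well-definedness of $\pi_2$, the section of $\pi_2$, and the three extractions in (iii) — is formal, although one should take some care that the blow-up presentation of $K_0(\Var^{\le2}/k)$ is genuinely available over an arbitrary perfect field.
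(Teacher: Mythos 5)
Your proof is correct, and it follows the paper's overall strategy: reduce to the Bittner-style blow-up presentation of $K_0(\Var^{\le 2}/k)$ (valid over a perfect field since resolution and strong factorization are classical for surfaces), observe that right-exactness of \eqref{eq:K0-2seq} is formal, and locate the entire content in the injectivity of the first map, which is exactly where Theorem \ref{thm:main} enters; your extractions in (iii) also match the paper's. The one genuine difference is how injectivity is established. The paper constructs an explicit retraction $\epsilon: K_0^\bl(\Var^{\le 2}/k) \to K_0^\bl(\Var^{\le 1}/k)$ by choosing a smooth projective model $M_L$ for each two-dimensional function field and setting $\epsilon([X]) = \L\cdot\ol{c}(M_{k(X)} \dashrightarrow X)$, with Theorem \ref{thm:main} guaranteeing independence of the chosen birational isomorphism and Lemma \ref{lem:c-add} guaranteeing compatibility with the relations. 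You instead compute the kernel of $K_0(\Var^{\le 1}/k) \to K_0(\Var^{\le 2}/k)$ directly from the relation subgroup, identifying it with $w(Z_1(\Gamma))$ for the graph of elementary blow-ups and then noting that cycles correspond to birational self-maps, on which $c$ vanishes by Theorem \ref{thm:main}. The two arguments are essentially dual (the well-definedness of $\epsilon$ is precisely the vanishing of $w$ on loops), but yours avoids any choice of models and makes the role of Theorem \ref{thm:main} as ``the obstruction is generated by self-maps'' completely explicit, at the cost of having to justify the free presentation $\Z[\mathcal{S}]\oplus K_0(\Var^{\le 1}/k)$ modulo the point blow-up relations — which does hold here, since the only nontrivial Bittner relations mixing dimensions $2$ and $\le 1$ are blow-ups of surfaces at closed points.
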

\begin{proof}
(i) 
Because resolution of singularities and weak factorization
(even strong factorization) are
known for surfaces over arbitrary 
perfect fields, the proof of \cite{Bittner} goes through to show that
we have an isomorphism
\[
K_0^\bl(\Var^{\le 2}/k) \to K_0(\Var^{\le 2}/k),
\]
where the first group is defined by 
smooth projective varieties
of dimension $\le 2$ and 
Bittner's blow up relations.
These relations can be equivalently
presented as 
\begin{equation}\label{eq:blowup-rel-c}
[Y] = [X] + \L \cdot \bar{c}(\phi)
\end{equation}
for all birational 
isomorphisms $\phi: X \dashrightarrow Y$
between smooth projective surfaces. Indeed,
\eqref{eq:blowup-rel-c} include the blow up
relations if $\phi: Y \to X$ is a smooth blow up
as a particular case
and conversely,
\eqref{eq:blowup-rel-c} follow 
as soon as we impose
the blow up relations as
in the proof of Lemma
\ref{lem:c-Groth}.
Since curves admit unique smooth projective models, we have
the canonical splitting of \eqref{eq:K0Groth-1} giving
\[
K_0(\Var^{\le 1}/k) = K_0^\bl(\Var^{\le 1}/k) \simeq \Z[\Var^0/k]
\oplus \Z[\Bir_1/k].
\]
Furthermore we have an obvious
short exact sequence
\[
K_0^\bl(\Var^{\le 1}/k) 
\to K_0^\bl(\Var^{\le 2}/k)
\to \Z[\Bir_2/k] \to 0
\]
and to prove \eqref{eq:K0-2seq} it suffices to
show that the first map in the sequence is
split-injective.

The splitting is based on 
Theorem \ref{thm:main} and is not
canonical. First of all,
we choose a smooth projective representative
for each birational class of surfaces.
If $L$ is a two-dimensional function field,
we write $M_L$ for the chosen model.
We define the splitting
\[
\epsilon: K_0^\bl(\Var^{\le 2}/k) \to K_0^\bl(\Var^{\le 1}/k)
\]
by identity on classes of smooth projective curves
and zero-dimensional schemes and if $X$ is 
a smooth projective surface we define
\[
\epsilon([X]) = \L \cdot \ol{c}(M_{k(X)} 
\overset{\psi}{\dashrightarrow}
X)
\]
for any choice of a birational isomorphism $\psi$
between $X$ and its model $M_{k(X)}$.
The fact that this is independent of $\psi$
is the content of Theorem \ref{thm:main},
and the fact that $\epsilon$ is well-defined,
that is preserves the relations
\eqref{eq:blowup-rel-c},
follows immediately from the property
that $c$ is additive on compositions
(Lemma \ref{lem:c-add})
applied to a composition of birational
isomorphisms
$M \overset{\psi}{\dashrightarrow} X 
\overset{\phi}{\dashrightarrow} Y$
(where $M := M_{k(X)} = M_{k(Y)}$).

(ii) follows from \eqref{eq:K0-2seq};
explicit splittings are constructed in
the proof of (i).

(iii) The case $i = 2$ is clear as if $\A^i \times_k Z$
and $\A^i \times_k Z'$ are $k$-birational, then $Z$ and $Z'$
are isomorphic.
For $i = 1$, the element
$\L \cdot ([Z] - [Z'])$
is the image of $(-[Z]+[Z'], [\P^1 \times_k Z] - [\P^1 \times_k Z'])$ under the first map
in \eqref{eq:K0-2seq}, and since this map is injective, $[Z] = [Z']$,
that is $Z$ and $Z'$ are isomorphic.
Finally the case $i = 0$ follows again from \eqref{eq:K0-2seq}.
\end{proof}

\section{Birational geometry of surfaces}\label{sec:birsurf}

In this section we recall the Minimal Model Program
and Sarkisov link decomposition for surfaces,
which is used in our proof of Theorem \ref{thm:main}.
In Proposition \ref{prop:c-nongeomrat} we prove
Theorem \ref{thm:main} for birational types
with particularly simple links.
In \S \ref{subsec:curves-dP} we investigate
linear systems of rational curves on del Pezzo surfaces,
which will be needed to finish
the proof of Theorem \ref{thm:main}
in \S \ref{sec:geom-rat-5}.

\subsection{Birational classification of surfaces and links}

Let $X/k$ be a geometrically irreducible surface. 
Recall that $X$
is called minimal
if it does not have a Galois orbit
of disjoint $(-1)$-curves, or equivalently,
every regular birational map $X \to X'$
from $X$ to a smooth surface is an isomorphism.
We say that $X$ is rational if it is birational to $\P^2$
over $k$ and $X$ is geometrically rational
if it is birational to $\P^2$ over $\ol{k}$.
A del Pezzo surface is a smooth projective
geometrically irreducible surface with ample anticanonical class.

A conic bundle is a fibered surface $\pi: X \to C$, with $C$
a smooth projective curve, such that the generic fiber of $\pi$ 
is a smooth rational curve
and $X$ has Picard rank two. 
For a conic bundle  $\pi: X \to C$, $\pi_{*}\go_{X/C}^\vee$ is locally free of rank $3$  and we have an embedding $X \hto \P(\pi_{*}\go_{X/C}^\vee)$ over $C$ realizing each fiber of $\pi: X \to C$ as a plane conic (see~\cite[Corollary 3.7]{HassettNC}). 
The number $m$
of geometric singular fibers of a conic bundle
equals
\begin{equation}\label{eq:KX2-conicbundle}
m = 8 - K_X^2
\end{equation}
see e.g. \cite[Theorem 3]{IskovskikhMinModel},
in particular $K_X^2 \le 8$.

We write $\NS(X)$
for the N\'eron-Severi group
of divisors modulo algebraic equivalence; by
the theorem of N\'eron-Severi it is a
finitely generated abelian group.
Each contraction
of a Galois orbit of $(-1)$-curves
decreases the rank of the N\'eron-Severi group,
hence a sequence of contractions 
always terminates
to produce a minimal surface birationally
equivalent to the given one.

We have the following classification result going back
to Enriques, Manin and Iskovskikh, see \cite[Theorem III.2.2]
{Kollarrat}.

\begin{theorem}[Minimal Model Program in dimension two]
\label{thm:mmp}
Any geometrically irreducible minimal
surface $X/k$ is isomorphic to exactly one of the following:
\begin{itemize}
    \item Non-geometrically rational case
\begin{enumerate}
    \item[(1)] Surface with nef $K_X$  
    \item[(2)] Conic bundle $\pi : X \to C$ with
     $g(C) > 0$
\end{enumerate}

    \item Geometrically rational case
\begin{enumerate}
    \item[(3)] Conic bundle $X \to C$ with $g(C) = 0$
    \item[(4)] Minimal del Pezzo surface of Picard rank one
\end{enumerate}
\end{itemize}

In cases (2) and (3), $\NS(X)$ is of rank two.

\end{theorem}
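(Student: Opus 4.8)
The plan is to run the $K_X$-Minimal Model Program equivariantly, which over a perfect field amounts to working with Galois-stable cycle classes, and to read off the four cases from the type of the extremal contraction one obtains. Concretely, I would first record that the cone and contraction theorems for projective surfaces are elementary and valid over any field (see e.g.\ \cite{Kollarrat}), and that all the canonically defined data — in particular a $K_X$-negative extremal ray, when it exists — is preserved by $G_k = \Gal(\bar k/k)$, so the associated contraction is defined over $k$.

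If $K_X$ is nef, we are in case (1) and there is nothing more to say on the classification side. Suppose $K_X$ is not nef. The cone theorem produces a $K_X$-negative extremal ray; its contraction $c\colon X\to Z$ is of one of three types. It cannot be divisorial: a divisorial contraction of a $k$-surface contracts a $G_k$-orbit of disjoint $(-1)$-curves, contradicting $k$-minimality of $X$. So $c$ is of fiber type, and $Z$ is either a point or a curve. If $Z$ is a point, then $\rho(X) = 1$ and, the extremal ray spanning $\overline{NE}(X)$ being $K_X$-negative, $-K_X$ is ample; thus $X$ is a minimal del Pezzo surface of Picard rank one, i.e.\ case (4), and since a del Pezzo surface becomes $\mathbb{P}^2$, $\mathbb{P}^1\times\mathbb{P}^1$, or a blow-up of $\mathbb{P}^2$ over $\bar k$, it is geometrically rational. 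If $Z = C$ is a curve, then $C$ is smooth projective, $\rho(X) = \rho(C)+1 = 2$, and the generic fibre of $\pi:=c\colon X\to C$ is a smooth conic, hence a smooth rational curve geometrically; this is exactly a conic bundle. When $g(C) = 0$ we are in case (3). When $g(C) > 0$, the surface $X_{\bar k}$ dominates $C_{\bar k}$ with rational general fibre, so $q(X_{\bar k}) = g(C) > 0$ and $X$ is not geometrically rational, which is case (2). The equality $\rho(X)=2$ in cases (2) and (3) is the rank-two assertion; combined with the Enriques–Kodaira classification over $\bar k$ (a surface with $\kappa=-\infty$ is geometrically ruled or $\mathbb{P}^2$, never with nef canonical class) it also yields that the four cases are mutually exclusive: case (1) has nef $K_X$ and hence no negative extremal ray, cases (2)--(3) have $\rho(X)=2$ while case (4) has $\rho(X)=1$, and cases (2) versus (3)--(4) are separated by geometric rationality.

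The point requiring the most care is the $k$-rationality of the MMP step: one must check that the chosen extremal ray is $G_k$-invariant — which holds because $\overline{NE}(X_{\bar k})$ carries a $G_k$-action and, $K_X$ being defined over $k$, the $K_X$-negative part of the cone and its extremal structure descend — and that the resulting contraction, a priori only a $\bar k$-morphism to a projective target, is defined over $k$ by Galois descent. The complementary subtlety is the converse direction of the rank-two statement: that a $k$-minimal conic bundle has $\rho(X)=2$ rather than larger. Here $\rho(X)\ge 2$ always (the fibre class together with $-K_X$), and if $\rho(X)\ge 3$ then $\NS(X_{\bar k})^{G_k}$ contains a class outside the span of the fibre class and $-K_X$, which forces a $G_k$-orbit of pairwise disjoint $(-1)$-curves among the components of the singular fibres, contradicting $k$-minimality; this is the one place where a short intersection-theoretic argument on $X_{\bar k}$ is needed. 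All remaining verifications (that conic bundles over positive-genus curves are not geometrically rational, that del Pezzo surfaces are geometrically rational, compatibility of $\NS(X)_{\mathbb{Q}}$ with $\NS(X_{\bar k})^{G_k}_{\mathbb{Q}}$ over a perfect field) are standard.
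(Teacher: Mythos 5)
The paper does not prove this theorem: it is quoted as a known classification result going back to Enriques, Manin and Iskovskikh, with the reference \cite[Theorem III.2.2]{Kollarrat}, so there is no in-paper argument to compare yours against. Your sketch is, in substance, the standard proof contained in that reference: run the $K_X$-MMP equivariantly over the perfect field $k$, rule out the birational (divisorial) contraction by $k$-minimality, and read off cases (2)--(4) from the dimension of the target of the fiber-type contraction, with $\rho(X)=\rho(C)+1=2$ coming from the relative Picard rank of an extremal contraction; exclusivity then follows from nefness of $K_X$, the Picard rank, and geometric (ir)rationality exactly as you say. Two points where you are really importing the content of the cited theorem rather than deriving it from the bare cone theorem: (a) the assertion that a divisorial contraction over $k$ contracts a Galois orbit of \emph{pairwise disjoint} $(-1)$-curves is part of the trichotomy itself --- a conjugate orbit of $(-1)$-curves that meet one another spans a ray with nonnegative square and feeds into the fiber-type cases, so disjointness is a conclusion of the structure theorem, not an automatic consequence of Galois descent of an extremal ray; and (b) the converse rank-two statement for an arbitrary minimal conic bundle (that $\rho(X)\ge 3$ forces a Galois-stable set of pairwise disjoint $(-1)$-curves among the components of the singular fibers) is Iskovskikh's lemma and genuinely needs the intersection-theoretic argument you flag, since one must choose one component from each of a suitable subset of the singular fibers in a Galois-equivariant way. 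With those two ingredients granted --- both are in the cited sources --- your proof is correct and is the same argument the paper's reference supplies.
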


Note that if $k$ is algebraically closed,
any conic bundle in (2), (3) has
no singular fibers (as components of singular
fibers are $(-1)$-curves),
thus (2) consists of ruled surfaces
and (3) consists of Hirzebruch
surfaces $\F_n$.

\begin{definition}[Sarkisov links, see 2.2 in \cite{IskovskikhFact}]\label{def:links}
 Let $X \to B$ and $X' \to B'$ 
 be Mori fiber spaces with $\dim X = \dim X' = 2$. 
Here each Mori fiber space is either a del Pezzo surface
 of Picard rank one or a conic bundle of Picard rank two over
 a smooth curve.
 A Sarkisov link between $X$ and $X'$ 
 is a birational map $\nu : X \dto X'$ 
 satisfying one of the following descriptions. 
 \begin{itemize}
    \item \textbf{(Type I)}
    $\nu^{-1}$ is the blow up of $X$ at a smooth
    closed
    point with $B = \Spec(k)$ and $X' \to B'$ a conic bundle.
    \item \textbf{(Type II)} We have a commutative diagram
      \begin{equation}\label{cd-TypeII}
          \begin{tikzcd}[cramped]
		 X \ar[d] & Y \ar[l, swap,"\ga"] \ar[r,"\gb"] & X' \ar[d]\\
		B \ar[rr,"\sim"] &  & B'
		\end{tikzcd}
      \end{equation}
      and $\nu = \gb \circ \ga^{-1}$ where both $\ga$ and $\gb$ are blow ups at a smooth closed point.
      In this case,
      \begin{itemize}
          \item \textbf{(Type IIC)} either both
          $X \to B$ and $X' \to B'$
          are conic bundles;
          \item \textbf{(Type IID)} or $B = B' = \Spec(k)$. 
      \end{itemize}
    \item \textbf{(Type III)} This is the inverse of link (I).
    \item \textbf{(Type IV)} $X \rightarrow B$, $X' \to B'$ are conic bundles and $\nu$ is an isomorphism not respecting the conic bundle structures.
 \end{itemize}
\end{definition}

We rely on decomposing birational isomorphisms
of surfaces into Sarkisov links \cite{IskovskikhFact}. Some
of the links are 
easier
to deal with.

\begin{lem}\label{lem:cIIC=0}
If $\nu: X \dashrightarrow X'$ is a Sarkisov link of type IIC, then $K_{X}^2 = K_{X'}^2$,
$\rk \; \NS(X)=\rk \; \NS(X')$ and  $c(\nu) = 0$.
\end{lem}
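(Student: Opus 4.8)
The plan is to reduce the lemma to the single assertion $Z\simeq Z'$, and then to identify both centers with a closed point of the base curve, exploiting that a type IIC link is essentially an elementary transformation of conic bundles. For the bookkeeping: both $X\to B$ and $X'\to B'$ are conic bundles of Picard rank two, so $\rk\NS(X)=\rk\NS(X')=2$; writing $E=\alpha^{-1}(Z)$ and $E'=\beta^{-1}(Z')$ for the two exceptional divisors, the blow up formula for the canonical class gives $K_X^2-\deg_k Z=K_Y^2=K_{X'}^2-\deg_k Z'$; and by Corollary~\ref{cor:c-axioms}, $c(\nu)=c(\beta)+c(\alpha^{-1})=[Z]-[Z']$. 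Hence both remaining claims follow once $Z\simeq Z'$ over $k$ is established.

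Next I would observe that each of $E$, $E'$ is a $k$-irreducible divisor, namely $E\simeq\P^1_{k(Z)}$ and $E'\simeq\P^1_{k(Z')}$, with $E^2=-\deg_k Z<0$ and $E'^2=-\deg_k Z'<0$, and that since $X$ is minimal, every fibre $X_b$ of $\pi\colon X\to B$ is $k$-irreducible, of self-intersection $0$ (likewise for $\pi'$). Now $E'$ is vertical for the induced map $Y\to B$ (it is contracted by $\beta$), so it lies in some fibre; for $b\neq W:=\pi(Z)$ that fibre is isomorphic to the irreducible curve $X_b$ of self-intersection $0$, incompatible with $E'^2<0$, so $E'$ is a component of the fibre $\alpha^{*}(X_W)$ over $W$. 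I may assume $E\neq E'$, as otherwise $\nu$ is an isomorphism carrying $Z$ to $Z'$. Since the $k$-irreducible components of $\alpha^{*}(X_W)$ are $E$ and the strict transform $\widetilde{X_W}$, this forces $E'=\widetilde{X_W}$, so $\alpha$ restricts to a morphism $\widetilde{X_W}\to X_W$ onto $X_W$.

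The final point I would establish is that $X_W$ is \emph{smooth}: if not, then either $Z$ is a smooth point of $X_W$, which forces $E'=\widetilde{X_W}\simeq X_W$ to be singular, contradicting $E'\simeq\P^1_{k(Z')}$; or $Z$ is the node of $X_W$, in which case $\widetilde{X_W}^{\,2}=4E^2=-4\deg_k Z$, so the geometric components of $E'$ have self-intersection $-2$, contradicting that $E'$ is exceptional for a blow up at a smooth point. Therefore $X_W$ is a smooth conic over $M:=k(W)$, $\alpha$ restricts to an isomorphism $\widetilde{X_W}\xrightarrow{\sim}X_W$, and so $Z'=\Spec H^0(E',\OO_{E'})=\Spec H^0(X_W,\OO_{X_W})=\Spec M$. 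The symmetric argument applied to $\nu^{-1}$, to $E$, and to $\pi'\colon X'\to B'$ gives $Z=\Spec k(W')$ with $W'$ the point of $B'$ matching $W$ under $B\simeq B'$; since $k(W')\simeq M$ I conclude $Z\simeq Z'\simeq\Spec M$, and the lemma follows. The step I expect to be most delicate is this last one — ruling out a link supported on a singular fibre and showing that the center maps isomorphically onto the base point; alternatively it can be read off from Iskovskikh's classification of links \cite{IskovskikhFact}, which presents every type IIC link as an elementary transformation of a conic bundle centred over a point of the base.
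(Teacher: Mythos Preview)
Your proof is correct, and it takes a genuinely different route from the paper. The paper's argument is a one-liner modulo a citation: it invokes \cite[Theorem~2.6]{IskovskikhFact} to assert that every type~IIC link is an elementary transformation---$\alpha$ blows up a closed point $p$ lying in a \emph{smooth} fibre $C$, and $\beta$ contracts the strict transform $\tilde C\simeq C\simeq\P^1_{k(p)}$---from which $c(\nu)=0$ is immediate, and $K_X^2=K_{X'}^2$ is read off from the unchanged number of singular fibres via \eqref{eq:KX2-conicbundle}. You instead derive this structure from the link axioms and minimality alone: you pin $E'$ to the fibre over $W=\pi(Z)$ by a self-intersection count, identify it with the strict transform $\widetilde{X_W}$, and eliminate the possibility of a singular fibre by checking that in either sub-case the geometric components of $\widetilde{X_W}$ would fail to be $(-1)$-curves. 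What your approach buys is transparency: it makes explicit that both centres are forced to have residue field $k(W)$, a point the paper's proof uses implicitly (the identification $C\simeq\P^1_{k(p)}$ silently requires $k(p)=k(\pi(p))$, which is forced by the contractibility of $\tilde C$ to a smooth point). The cost is length; the paper's route is shorter but leaves that mechanism inside Iskovskikh's classification, which you yourself flag as the alternative at the end.
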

\begin{proof}
Let $\phi : X \to B$ be the conic bundle as in~\eqref{cd-TypeII}. By~\cite[Theorem 2.6]{IskovskikhFact}, $\nu$ is an elementary transformation of $X \to B$. More precisely, $\ga$ in~\eqref{cd-TypeII} is the blow up at a smooth closed
point $p \in X$ lying in a smooth fiber $C$ of $\phi$ and $\gb$ in~\eqref{cd-TypeII} is the contraction of the proper transformation $\ti{C}$ of $C$ under $\ga$. Since $\ti{C} \simeq C \simeq \P^1 \times_{\Spec(k)} \{p\}$, the blow up center of $\gb$ is isomorphic to $p$. Hence $\rk \; \NS(X)=\rk \; \NS(X')$ and $c(\nu) = 0$.
The equality $K_{X}^2 = K_{X'}^2$ follows
from the fact that $X \to B$ and $X' \to B'$ have the same number
of geometric singular fibers using \eqref{eq:KX2-conicbundle}.
\end{proof}

Recall that the degree of a geometrically rational surface $X$ is defined to be $K_X^2$.
The next result is a step
towards Theorem \ref{thm:main}.

\begin{proposition}\label{prop:c-nongeomrat}
Let $X$, $X'$ be a pair of birational
minimal 
geometrically irreducible
surfaces.
Assume that $X$
is either (1) geometrically irrational
or (2) geometrically rational 
and of degree $\le 4$,
then the same holds for $X'$
and for any birational isomorphism
$\phi: X \dashrightarrow X'$ we have 
\begin{equation}\label{eq:c-rkNS}
c(\phi) = (\rk \; \NS(X') - \rk \; \NS(X)) \cdot [\Spec(k)].
\end{equation}
In particular $c(\phi) = 0$ for birational automorphisms
of such surfaces.
\end{proposition}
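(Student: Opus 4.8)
The strategy is to decompose $\phi$ into Sarkisov links by Iskovskikh's factorization theorem~\cite{IskovskikhFact} and to verify~\eqref{eq:c-rkNS} for a single link: this is enough because $c$ is additive under composition (Lemma~\ref{lem:c-add}) and the right-hand side of~\eqref{eq:c-rkNS} telescopes. Thus it suffices to prove that for every Sarkisov link $\nu\colon W\dashrightarrow W'$ with source $W$ geometrically irrational, or geometrically rational of degree $\le 4$, the target $W'$ is of the same kind and $c(\nu)=(\rk\NS(W')-\rk\NS(W))\cdot[\Spec k]$. That $W'$ is again geometrically irrational is automatic, since rationality over $\bar k$ is a birational invariant; that the degree bound is preserved will fall out of the analysis of which links can occur.

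\emph{Geometrically irrational case.} If $K_W$ is nef, then $W$ has non-negative Kodaira dimension, hence is the unique minimal model in its birational class and $\Bir(W)=\Aut(W)$; so $\phi$ is biregular, $c(\phi)=0$, and $\rk\NS$ is unchanged. If $W$ is a conic bundle $\pi\colon W\to C$ with $g(C)>0$, then $\pi$ is the maximal rationally connected fibration of $W$, so every Mori fiber space in the Sarkisov decomposition of $\phi$ is again a conic bundle over a curve isomorphic to $C$; hence no link of type I, III or IID appears, and, the conic bundle structure being unique, no link of type IV appears either. So $\phi$ is a composition of type IIC links, and by Lemma~\ref{lem:cIIC=0} each contributes $0$ to $c$ and preserves $\rk\NS$; thus $c(\phi)=0=(\rk\NS(W')-\rk\NS(W))[\Spec k]$.

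\emph{Geometrically rational case, degree $\le 4$.} Here I would go through Iskovskikh's explicit list of links~\cite{IskovskikhFact}: when the source has degree $\le 4$, the links that occur are (i) type IIC and type IV links between conic bundles, and (ii) type I, III and IID links involving del Pezzo surfaces of Picard rank one, and in all of these the blown-up and blown-down centers are $k$-rational points and the degree stays $\le 4$ (the Geiser and Bertini involutions of $dP_2$ and $dP_1$ are biregular and occur only as the isomorphism terminating the decomposition, contributing $0$). For (i), type IIC is Lemma~\ref{lem:cIIC=0} and type IV links are isomorphisms, so $c(\nu)=0$ with $\rk\NS$ preserved. For (ii), a type I link has $c(\nu)=[\Spec k]$ and raises $\rk\NS$ by $1$, a type III link is its inverse, and a type IID link between two Picard-rank-one del Pezzo surfaces has $c(\nu)=[\Spec k]-[\Spec k]=0$ with $\rk\NS$ equal to $1$ throughout; in each case~\eqref{eq:c-rkNS} holds. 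A useful cross-check on the coefficient, which also lets one avoid tracking $\rk\NS$: the étale realization gives, via Lemma~\ref{lem:c-Groth} and Example~\ref{ex:mu-et-X}, the identity $\mu_{G_k}(c(\phi))=[\NS(X'_{\bar k})\otimes\Q_\ell]-[\NS(X_{\bar k})\otimes\Q_\ell]$ in $K_0(\Rep(G_k,\Q_\ell))$, so once $c(\phi)\in\Z\cdot[\Spec k]$ is known, its coefficient is forced to equal $\rk\NS(X')-\rk\NS(X)$.

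The main obstacle is the input from Iskovskikh's classification in the geometrically rational case --- that a minimal geometrically rational surface of degree $\le 4$ admits no link blowing up a closed point of degree $>1$ and no link escaping the range of degrees $\le 4$; equivalently (by the cross-check above), that $c(\nu)\in\Z\cdot[\Spec k]$ for each relevant link. This is precisely where the hypothesis ``degree $\le 4$'' is essential: in the complementary regime $K_X^2\ge 5$, links with genuinely higher-degree centers do appear and~\eqref{eq:c-rkNS} fails, which is why that case is treated separately, via virtual N\'eron--Severi sets, in~\S\ref{sec:geom-rat-5}.
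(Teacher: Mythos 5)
Your overall strategy --- decompose $\phi$ into Sarkisov links and verify \eqref{eq:c-rkNS} link by link using additivity of $c$ --- is exactly the paper's, and your treatment of the geometrically irrational case is essentially sound: for nef $K_X$ the paper likewise invokes the classical fact that birational maps are biregular, and for conic bundles over a positive-genus curve the paper quotes a result of Schneider reducing to type IIC links, which your MRC-fibration argument re-derives.

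The gap is in the geometrically rational case of degree $\le 4$, and it lies precisely in what you single out as ``the main obstacle'': your asserted input from Iskovskikh's classification --- that no link on a minimal surface of degree $\le 4$ blows up a closed point of degree $>1$ --- is false. The type IID links occurring in this range include the Bertini and Geiser involution links: on a minimal quartic del Pezzo surface the Bertini link blows up a closed point of degree $3$ (the middle surface $Y$ is a $dP_1$) and the Geiser link a point of degree $2$ (reaching a $dP_2$); on a minimal cubic surface the Bertini link blows up a degree-$2$ point. You have conflated the Bertini/Geiser involutions of $dP_1$ and $dP_2$ themselves (which are indeed biregular automorphisms) with the Bertini/Geiser \emph{links} on the higher-degree surface $X$, which are non-biregular birational self-maps of $X$ appearing in the middle of decompositions, not merely as a terminating isomorphism. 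As written, your computation $c(\nu)=[\Spec k]-[\Spec k]=0$ does not apply to them, and the \'etale-cohomology cross-check cannot close the gap on its own, since $\mu_{G_k}$ is not injective and you first need $c(\nu)\in\Z\cdot[\Spec k]$. The repair is short and is what the paper does: these links are birational \emph{involutions} of $X$, so $2c(\nu)=c(\nu\circ\nu)=0$ and hence $c(\nu)=0$ because $\Z[\Var^0/k]$ is torsion-free (equivalently, the blow-up and blow-down centers are exchanged by the biregular involution of $Y$ and are therefore isomorphic). With that case added --- alongside isomorphisms, type IIC links, and the type I/III links blowing up a rational point between degree $4$ and degree $3$ --- your list matches the paper's and the argument closes.
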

\begin{proof}
(1) It is clear
that $X'$ is geometrically irrational and moreover, $X$ and $X'$ have the same type
in Theorem \ref{thm:mmp}.
If $X$, $X'$ have nef canonical class, then
every birational isomorphism is a biregular
isomorphism by \cite[Corollary 1 in II.7.3]{IskovskikhShafarevich-Surfaces}
so that $X \simeq X'$ and $c(\phi) = 0$ as required.

If $X$, $X'$ are conic bundles over
a curve of positive genus, then by \cite[Corollary 3.2]{Sch19},
birational maps between them can be decomposed
into Sarkisov links of type IIC, in which case
$\rk \; \NS(X) = \rk \; \NS(X')$ and $c(\phi) = 0$ by Lemma~\ref{lem:cIIC=0}.

(2) 
It follows from the classification of links in \cite[Theorem 2.6]{IskovskikhFact} that
elementary links will only connect
$X$ to 
minimal surfaces of degree $\le 4$
or conic bundles of degree $3$ and Picard rank $2$,
and that
birational isomorphisms between such surfaces
will be decomposed into
\begin{itemize}
    \item Biregular isomorphisms
    \item Type IIC links 
    \item Bertini and Geiser involutions
    \item Blow ups of a rational point (between degree $4$
    and degree $3$ surfaces).
\end{itemize}

For each of these types the claim of Proposition~\ref{prop:c-nongeomrat} is true, namely in the first three
cases $\rk \; \NS(X) = \rk \; \NS(X')$ and $c(\phi) = 0$
(using Lemma \ref{lem:cIIC=0} for the second case),
while in the last case the ranks differ by
one and the result is true by definition of $c$.
Equality~\eqref{eq:c-rkNS} follows by additivity of $c$
under compositions (Lemma~\ref{lem:c-add}).
\end{proof}

\begin{remark}
From the proof of Proposition~\ref{prop:c-nongeomrat}, 
we note that in~\eqref{eq:c-rkNS}, we have actually
$c(\phi) = 0$ or $c(\phi) = \pm [\Spec(k)]$. 
Moreover, $c(\phi) = \pm [\Spec(k)]$ only happens 
when $\phi$ is a map between a del Pezzo surface of degree $4$
and a conic bundle of degree $3$.
\end{remark}

\subsection{Rational curves on del Pezzo surfaces}
\label{subsec:curves-dP}

Linear systems 
of rational curves on del Pezzo surfaces
are closely related to factorization centers:
for instance, to create a rational two-dimensional
quadric $X$, one needs to blow up $\P^2$ in $Z_2$,
where $Z_2$ is degree two étale $k$-scheme, and to contract
a line through the center. This way the original
scheme $Z_2$ can be recovered as the scheme
parametrizing rulings on $X$ (cf Definition \ref{def:MX}).

\begin{definition}\label{def-deg}
Let $j \ge 1$.
We call a complete
linear system $|L|$ of curves on a del Pezzo surface $X$
\emph{a linear system of degree $j$ rational curves}
if a general member $C \in |L|$ is a smooth rational
curve and $(-K_X) \cdot L = j$.
\end{definition}

For each $j \ge 1$ we consider $\HH^j(X)$,
the Hilbert scheme of curves on $X$ with general members
of each component
being 
smooth rational curves of degree $j$. 
By an easy computation
(see Lemma \ref{lem:curves}(i))
$H^1(X, L) = 0$, hence $\HH^j(X)$ is smooth \cite[Theorem 2.8]{Kollarrat}.
In fact, over the algebraic
closure $\ol{k}$, each $\HH^j(X_{\ol{k}})$ is a disjoint union of projective
spaces parametrizing effective divisors in the corresponding
linear systems.
We refer to points of $\HH^1(X)$ as lines on $X$,
points of $\HH^2(X)$ as conics and so on.
When $X$ is a twisted form of $\P^1 \times \P^1$
(a minimal del Pezzo surface of degree $8$), then 
all curves have even degree,
and families of conics on $X$ are also
called rulings.

We note that by adjunction formula, a linear system
$L$ of rational curves 
of degree $j$ satisfies
\begin{equation}\label{eq:num-rat}
L^2 = j-2, \;\; (-K_X) \cdot L = j.
\end{equation}
However the latter numerical conditions are not sufficient
to deduce that a linear system consists of rational curves.
Indeed the linear system $3H + E$ on $\Bl_P(\P^2)$, where $H$
is the class of a line on $\P^2$, and $E$ is the
exceptional divisor, satisfies \eqref{eq:num-rat} with $j=10$
but
has a fixed component and contains no rational curves.

We investigate how to find all linear systems
of rational curves of a given degree following
\cite{Manin}.
The first step is to
solve \eqref{eq:num-rat}.  
Let $X$ be a del Pezzo surface of degree $d = K_X^2$.
Assume $X_{\ol{k}}$ is not isomorphic to $\P^1 \times \P^1$
so that $X_{\ol{k}}$ is a blow up of $\P^2$ in $r := 9-d$ points by~\cite[Theorem IV.2.5]{Manin},~\cite[\S 3]{IskovskikhMinModel}.
Write $D = aH - \sum_{i=1}^r b_i E_i$, where $H$ is the pullback
of the hyperplane class and $E_i$ are the 
irreducible components of the exceptional divisor.
Since $-K_X = 3H - \sum_{i=1}^r E_i$,
\eqref{eq:num-rat} translates into:
\begin{equation}\label{eq:manin}
\left\{
\begin{array}{l}
     \sum_{i=1}^{r} b_i = 3a-j  \\
     \sum_{i=0}^{r} b_i^2 = a^2 - j + 2 \\ 
\end{array}
\right.
\end{equation}
The case $j = 1$ is that of lines on del Pezzo surfaces \cite{Manin}.

\begin{lemma}\label{lem:curves}
Let $X$ be a del Pezzo surface of degree $d$ 
over an algebraically closed field $k$.

\begin{enumerate}[(i)]
\item A linear system of rational curves of degree $j$
satisfies $h^0(X, L) = j$, and higher cohomology of $L$ vanish. 
\item Any linear system satisfying \eqref{eq:num-rat}
with $j \ge 1$ is non-empty.
\item A linear system satisfying \eqref{eq:num-rat}
with $1 \le j \le d-1$ is a linear system of rational curves.
\item Let $1 \le j \le d-1$.
The assignment $|D| \mapsto |-K_X -D|$ establishes a bijection
between linear systems of rational curves of degrees $j$
and $d-j$.
\end{enumerate}

\end{lemma}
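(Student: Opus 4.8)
The plan is to dispatch parts (i), (ii) and (iv) quickly by Riemann--Roch, Serre duality and adjunction, and to concentrate the work on part (iii). For (i): if $C\in|L|$ is a general (smooth rational) member, adjunction gives $C^2=2g(C)-2-K_X\cdot C=j-2$, which together with $(-K_X)\cdot L=j$ is exactly \eqref{eq:num-rat}; from the structure sequence $0\to\OO_X\to\OO_X(C)\to\OO_C(C^2)\to 0$, the vanishing $H^1(X,\OO_X)=0$ (as $X$ is rational) and $H^1(\P^1,\OO(j-2))=0$ (as $j\ge 1$) give $H^1(X,\OO_X(C))=0$, while Serre duality and ampleness of $-K_X$ give $H^2(X,\OO_X(C))=0$ since $(K_X-C)\cdot(-K_X)<0$ forces $K_X-C$ to be non-effective; hence $h^0(X,\OO_X(C))=\chi(\OO_X(C))=1+\tfrac12(C^2-C\cdot K_X)=j$. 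For (ii): for any class $D$ solving \eqref{eq:num-rat} with $j\ge 1$ one has $\chi(\OO_X(D))=j$ by Riemann--Roch and $H^2(X,\OO_X(D))=0$ as above, so $h^0(X,\OO_X(D))\ge j\ge 1$ and $|D|\ne\emptyset$. For (iv): the assignment $D\mapsto -K_X-D$ is an involution of $\Pic(X)$, and a direct computation shows it carries a solution of \eqref{eq:num-rat} of degree $j$ to one of degree $d-j$, while $1\le j\le d-1$ is equivalent to $1\le d-j\le d-1$; combined with (i) and (iii) this identifies linear systems of rational curves of degree $j$ with solutions of \eqref{eq:num-rat} of degree $j$, so the involution yields the asserted bijection.

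The real content is (iii). Fix a class $D$ solving \eqref{eq:num-rat} with $1\le j\le d-1$. By adjunction every integral member of $|D|$ has arithmetic genus $1+\tfrac12(D^2+D\cdot K_X)=0$, hence is a smooth rational curve; since the locus of integral members is open in $|D|\cong\P^{j-1}$ (non-empty by (ii)), it is enough to exhibit one integral member, i.e.\ to prove that $|D|$ has no fixed component and that its general member is irreducible. The case $j=1$ is immediate: there $h^0(X,\OO_X(D))=1$ and the unique effective representative is prime and reduced since $(-K_X)\cdot D=1$. For $2\le j\le d-1$ the goal is base-point freeness of $|D|$: given that, Bertini makes a general member $C$ smooth, the genus computation forces every connected component of $C$ to be a $\P^1$, and the vanishing $H^1(X,\OO_X(-D))=0$ (a Kodaira/Kawamata--Viehweg-type statement, available once $D$ is known to be nef with $D^2=j-2\ge 0$, the borderline case $D^2=0$ giving a conic-bundle fibration with connected fibres) makes $C$ connected, so $C\simeq\P^1$. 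The delicate point is to show $D$ has no fixed component, equivalently is nef: a fixed irreducible component $\Gamma$ would satisfy $D\cdot\Gamma<0$, hence be a $(-1)$-curve (the adjunction genus bound on a del Pezzo surface forces $\Gamma^2\ge -1$), and this must be ruled out using the smallness of $(-K_X)\cdot D=j\le d-1$ together with the Hodge index theorem.

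An alternative and probably cleaner route, closer to Manin's treatment in \cite{Manin}, is to reduce $D$ to an explicit standard representative on $X_{\ol{k}}=\Bl_r\P^2$ (the line class $H$, then $2H-E_1$, $3H-E_1-\dots-E_4$, and so on) by a sequence of standard quadratic Cremona transformations, equivalently via the action of the Weyl group $W(X)$ on $\Pic(X)$: this group preserves $K_X$ and the intersection form, hence permutes the solutions of \eqref{eq:num-rat} of a fixed degree, and since each standard quadratic transformation identifies $X$ with another del Pezzo surface of the same degree it transports linear systems of rational curves to linear systems of rational curves; one then only has to verify (iii) for the finitely many standard classes, where it is transparent. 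The case $X_{\ol{k}}\simeq\P^1\times\P^1$ is handled the same way with $\Pic(X)=\Z H_1\oplus\Z H_2$, where \eqref{eq:num-rat} has solutions only for even $j$. I expect the irreducibility/no-fixed-component step of (iii)---ruling out fixed $(-1)$-curves in $|D|$---to be the main obstacle, and the Weyl-group reduction to standard classes to be the most economical way past it; the cost is some combinatorial bookkeeping with the root system of $X$, plus a little care with the cohomology vanishings in positive characteristic, which nevertheless hold here because the surfaces involved are rational of small degree.
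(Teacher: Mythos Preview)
Your treatment of (i), (ii), and (iv) matches the paper's, up to cosmetic differences (you derive $H^2$-vanishing via Serre duality, the paper reads it directly off the structure sequence using $h^2(\OO_X)=0$).

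For (iii) your approach diverges from the paper's, and the paper's is considerably simpler. The trick you are missing is to \emph{use (ii) twice}: since $D$ satisfies \eqref{eq:num-rat} of degree $j$, the class $-K_X-D$ satisfies \eqref{eq:num-rat} of degree $d-j$, and under the hypothesis $1\le j\le d-1$ both are at least $1$, so by (ii) \emph{both} $|D|$ and $|-K_X-D|$ are nonempty. Writing $D=aH-\sum b_iE_i$ on $X_{\ol{k}}=\Bl_r\P^2$, effectivity on both sides forces $0\le a\le 3$. With $a$ so constrained, the system \eqref{eq:manin} has only a handful of integer solutions (up to reordering the $E_i$): $E_1$, $H-\sum_{i\le t}E_i$, $2H-\sum_{i\le r-t}E_i$, $3H-2E_1-\sum_{i\ge 2}E_i$, each of which is visibly represented by a smooth rational curve. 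The $\P^1\times\P^1$ case is handled separately (and there the bound $j\le d-1$ is not even needed). So no nefness/Bertini/Kodaira-vanishing machinery is required, nor any Weyl-group bookkeeping.

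Your first route for (iii) has a real gap as written: you correctly isolate the crux as excluding a fixed $(-1)$-curve $\Gamma$ from $|D|$, but the sentence ``this must be ruled out using the smallness of $(-K_X)\cdot D=j\le d-1$ together with the Hodge index theorem'' is a promissory note, not an argument; in fact the most natural way to cash it in is precisely the effectivity of $-K_X-D$, which is the paper's trick. Your Weyl-group alternative is valid in principle and closer in spirit to Manin's classical treatment, but it is heavier: you must identify orbit representatives, argue that $W(X)$ acts on $X$ by honest Cremona isomorphisms (so linear systems of rational curves are preserved), and then still check the standard classes by hand. The paper's argument bypasses all of this with the single observation $0\le a\le 3$.
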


\begin{proof}
(i) Let $C \in |L|$ be a smooth rational curve.
Taking cohomology for the short
exact sequence 
\[
0 \to \OO_X \to \OO_X(C) \to \OO_C(C) \to 0
\]
we get $H^1(X, \OO_X(C)) = H^2(X,\OO_X(C)) = 0$, and hence $h^0(X,\OO_X(C)) = j$
by Riemann-Roch.

(ii) We 
have $h^2(X, L) = h^0(X, L^\vee \otimes \omega_X) = 0$ (because $L^\vee \otimes \omega_X$ has negative
anticanonical degree) and the Riemann-Roch theorem implies
\[
h^0(X, L) \ge j.
\]

(iii) If $X \simeq \P^1 \times \P^1$,
then it is easy to see that solutions of \eqref{eq:num-rat}
are precisely divisors $(a,1)$, $a \ge 0$ and
$(1,b)$, $b \ge 0$, and these are linear systems
of rational curves (no upper bound on the degree
needed). Now assume that $X$ is a blow up of $\P^2$
in $0 \le r \le 8$ points.
Since $D$ satisfies \eqref{eq:num-rat},
it follows that $-K_X - D$ satisfies \eqref{eq:num-rat}
with degree $d - j$.
By (ii) both $|D|$ and $|-K_X - D|$ are nonempty.
Therefore if $D = aH - \sum_{i=1}^r b_i E_i$, then
$0 \le a \le 3$. 
Solving equations \eqref{eq:manin} with $b_i \in \Z$,
under the assumption $1 \le j \le d-1$
gives rise to the following divisors, which we list
up to reordering of the exceptional divisors:
\begin{itemize}
    \item $a = 0$: $E_1$
    \item $a = 1$: $H - \sum_{i=1}^t E_i$
    \item $a = 2$: $2H - \sum_{i=1}^{r-t} E_i$
    \item $a = 3$: $3H - 2E_1 - \sum_{i=2}^r E_i$
\end{itemize}
Indeed, the cases $a = 0, 1$ are straightforward
and the case $a = 2,3$ follow via the $D \mapsto -K_X - D$
substitution.
The $a = 1, 2$ cases are only possible when $r \le 7$ ($d \ge 2$). We must have $0 \le t \le 2$, and for $r = 6$ (resp. $r = 7$), 
only allow $t = 1, 2$ (resp. $t = 2$).
Under these conditions
each of the divisors in the list is linearly
equivalent to a smooth rational
curve.

(iv) Follows from (iii).
\end{proof}

\begin{proposition}\label{prop:hilbert-scheme}
Let $X/k$ be a del Pezzo surface of degree $d$.
\begin{enumerate}[(i)]
\item For each $j \ge 1$, $\HH^j(X)$ is either
empty or a smooth Severi-Brauer fibration
of relative dimension $j-1$
over
a smooth zero-dimensional scheme $\MM^j(X)$.
For each $1 \le j \le d-1$ we have a natural isomorphism
$\MM^{j}(X) \simeq \MM^{d-j}(X)$.

\item Assume $\HH^j(X)$ is nonempty and let $Z \subset X$ be a 
zero-dimensional subscheme of degree $j-1$.
Let $\HH^j(X,Z)$ be the closed subvariety of $\HH^j(X)$
parametrizing curves containing $Z$. If $\HH^j(X,Z)$ is
nonempty and zero-dimensional,
then it is isomorphic to $\MM^j(X)$.

\item Suppose that 
$X_{\ol{k}}$ is
obtained by blowing up 
$r \le 5$ points on $\P^2$ with exceptional divisors $E_1,\ldots,E_r$. Let $H \in \NS(X_{\ol{k}})$ be the pullback of the hyperplane class.
Then the classes of conics (resp. cubics) on $X_{\ol{k}}$ are $H - E_i$ and $2H - \sum_{i = 1}^4 E_i$ (resp. $H$ and $2H - \sum_{i = 1}^3 E_i$).

\item Consider the following cases.
\begin{enumerate}
    
\item [($dP_8$)] Let $X$ be 
a minimal
del Pezzo surface of degree $8$.
Then $\MM^4(X) \simeq \Spec(k)$ and $\MM^2(X) \simeq \MM^6(X) \simeq
Z_2$,
where $Z_2$ is an étale $k$-scheme of degree 2.

\item [($dP_6$)] Let $X$ be a del Pezzo surface of degree $6$.
Then
\[
\MM^2(X) \simeq \MM^4(X) \simeq
Z_3, \; \MM^3(X) \simeq Z_2
\]
for degree two (resp. degree three) \'etale $k$-schemes $Z_2, Z_3$.

\item [($dP_5$)] Let $X$ be a del Pezzo surface
of degree $5$. 
Then $\MM^2(X) \simeq \MM^3(X) \simeq Z_5$
for a degree five  \'etale $k$-scheme $Z_5$.

\end{enumerate}

\end{enumerate}

\end{proposition}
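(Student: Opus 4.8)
For part (i), recall that by definition $\HH^j(X)$ is the union of the connected components of the Hilbert scheme of $X$ whose general member is a smooth rational curve of degree $j$; in particular it is proper over $k$. The plan is to analyze it over $\ol{k}$ and then descend. By Lemma~\ref{lem:curves}(i), $H^1(X_{\ol k},L)=0$ for every linear system $|L|$ of degree-$j$ rational curves, so by \cite[Theorem 2.8]{Kollarrat} the Hilbert scheme is smooth along $|L|$ of dimension $h^0(X_{\ol k},L)-1=j-1$; since the complete linear system $|L|\cong\P^{j-1}$ already has that dimension, the component equals $|L|$. Hence $\HH^j(X_{\ol k})$ is a disjoint union of projective spaces $\P^{j-1}$, one for each linear system of degree-$j$ rational curves, and these are its connected components. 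I would then set $\MM^j(X):=\Spec\,\Gamma(\HH^j(X),\OO_{\HH^j(X)})$; this is a finite, geometrically reduced — hence étale — $k$-scheme whose geometric points correspond $G_k$-equivariantly to the linear systems, and $\HH^j(X)\to\MM^j(X)$ is smooth and proper with all geometric fibres isomorphic to $\P^{j-1}$, hence fibrewise a Severi--Brauer variety over the residue fields, i.e.\ a Severi--Brauer fibration of relative dimension $j-1$. The final assertion of (i) follows because $-K_X$ is Galois-invariant, so the bijection $|D|\mapsto|-K_X-D|$ of Lemma~\ref{lem:curves}(iv) between linear systems of rational curves of degrees $j$ and $d-j$ (for $1\le j\le d-1$) is $G_k$-equivariant and therefore induces an isomorphism $\MM^j(X)\simeq\MM^{d-j}(X)$ of étale $k$-schemes.

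For part (ii), note that $\HH^j(X,Z)$ is a closed subscheme of $\HH^j(X)$ whose formation commutes with the base change to $\ol k$. Inside a component $|L|=\P(H^0(X_{\ol k},L))$, the curves containing $Z$ form the linear subspace $\P(H^0(X_{\ol k},I_Z\otimes L))$, cut out scheme-theoretically by the $\deg Z=j-1$ linear equations given by vanishing along $Z$ of the section defining the universal divisor. From $0\to I_Z\otimes L\to L\to L|_Z\to 0$ and Lemma~\ref{lem:curves}(i) we get $h^0(X_{\ol k},I_Z\otimes L)\ge h^0(X_{\ol k},L)-\deg Z=1$, so $\HH^j(X,Z)_{\ol k}$ meets every component of $\HH^j(X)_{\ol k}$; the hypothesis that $\HH^j(X,Z)$ is zero-dimensional then forces $h^0(X_{\ol k},I_Z\otimes L)=1$ for every relevant $L$, so $\HH^j(X,Z)_{\ol k}$ consists of exactly one reduced point in each component. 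Thus $\HH^j(X,Z)$ is an étale $k$-scheme and the composite $\HH^j(X,Z)\hookrightarrow\HH^j(X)\to\MM^j(X)$ is a $G_k$-equivariant bijection between étale $k$-schemes, hence an isomorphism.

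For part (iii) I would read off the solutions of \eqref{eq:num-rat}--\eqref{eq:manin} classified in the proof of Lemma~\ref{lem:curves}(iii): for $1\le j\le d-1$ on a blow-up of $\P^2$ at $r=9-d$ points, the degree-$j$ rational curve classes are, up to reordering, $H-\sum_{i\le t}E_i$ ($j=3-t$, $0\le t\le 2$), $2H-\sum_{i\le s}E_i$ ($j=6-s$), and $3H-2E_1-\sum_{i\ge 2}E_i$ ($j=d-1$, needing all $r$ exceptional divisors). For $j=2$ this leaves $H-E_i$ and, when $r\ge 4$, $2H-\sum_{i=1}^4E_i$; for $j=3$ it leaves $H$ and, when $r\ge 3$, $2H-\sum_{i=1}^3E_i$ (plus the classes $-K_X-E_i=3H-2E_i-\sum_{\ell\ne i}E_\ell$ in the remaining case $r=5$). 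For part (iv) I would combine (i), (iii) and the classification over $\ol k$ — including the case $X_{\ol k}\cong\P^1\times\P^1$ of Lemma~\ref{lem:curves}(iii), where the degree-$j$ rational curve systems are the classes $(a,1)$ and $(1,b)$ — together with a count of linear systems and the induced Galois action. For a minimal degree-$8$ del Pezzo surface $X_{\ol k}\cong\P^1\times\P^1$ this gives two systems for $j=2$ and $j=6$ and a single one, $(1,1)$, for $j=4$; since $X$ has Picard rank one the Galois group exchanges $\{(0,1),(1,0)\}$, so $\MM^2(X)$ and $\MM^6(X)$ are of degree $2$ while $\MM^4(X)\simeq\Spec k$, and $\MM^2(X)\simeq\MM^6(X)$ by (i). For degree $6$ one obtains three conic classes and two cubic classes, whence $\MM^2(X)\simeq\MM^4(X)$ is of degree $3$ and $\MM^3(X)$ of degree $2$. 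For degree $5$ one obtains five conic classes ($H-E_i$ and $2H-\sum_{i=1}^4E_i$) and five cubic classes ($H$ and $2H-E_i-E_j-E_k$), and $\MM^2(X)\simeq\MM^3(X)$ is immediate from (i) since $3=d-2$.

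The arithmetic core of the statement thus reduces entirely to the classification of curves over $\ol k$ together with $G_k$-equivariance of $-K_X$ and of the blow-up structure, so I expect no essential difficulty once Lemma~\ref{lem:curves} is in hand. The points that require care are scheme-theoretic: identifying the Hilbert scheme of rational curves with the disjoint union of complete linear systems (which is precisely where the vanishing $H^1(X_{\ol k},L)=0$ enters), the reducedness of $\HH^j(X,Z)_{\ol k}$ in (ii), and recognizing a smooth proper family of projective spaces over an étale base as a Severi--Brauer fibration. The only input in (iv) that is not purely combinatorial is the Picard-rank-one hypothesis in the degree-$8$ case, which is what forces $\MM^2(X)$ to have degree $2$ rather than being split.
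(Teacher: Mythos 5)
Your proof is correct and follows essentially the same route as the paper's: identify $\HH^j(X)_{\ol{k}}$ with a finite disjoint union of complete linear systems (you via smoothness of the Hilbert scheme and a dimension count, the paper via $H^1(X,\OO_X)=0$ and the decomposition of the divisor scheme into linear systems), take $\MM^j(X)=\Spec H^0(\HH^j(X),\OO)$, get the duality from $|D|\mapsto|-K_X-D|$, and obtain (iii)--(iv) by solving \eqref{eq:manin}; your part (ii) is in fact slightly more complete than the paper's, since the estimate $h^0(I_Z\otimes L)\ge h^0(L)-\deg Z=1$ explains why $\HH^j(X,Z)$ meets every component of $\HH^j(X)$. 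Two minor remarks: the extra cubic classes $-K_X-E_i$ you note for $r=5$ are genuine solutions of \eqref{eq:manin} that the statement of (iii) omits (harmlessly, since only $r\le 4$ is used downstream), and your appeal to a ``Picard rank one'' hypothesis in the degree-$8$ case is both unwarranted (a minimal degree-$8$ del Pezzo surface such as $\P^1\times\P^1$ can have Picard rank two) and unnecessary, since the proposition only asserts that $Z_2$ is an \'etale scheme of degree two, not that it is irreducible.
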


\begin{proof}
(i) 
Since $H^1(X, \OO_X) = 0$ \cite[III.3.2.1]{Kollarrat},
deforming an effective divisor
$D$ as a subscheme is equivalent to deforming it in
its linear system, so
$\HH^j(X)_{\ol{k}}$
is either empty or a disjoint
union of complete linear systems (see \cite[I.1.14.2]{Kollarrat}). 
Since the Hilbert polynomial of 
a rational curve $C$ on a del Pezzo
surface is determined by its degree $C \cdot (-K_X)$,
and since the Hilbert
scheme is projective \cite[Theorem I.1.4]{Kollarrat},
in particular of finite type, it follows that
$\HH^j(X)_{\ol{k}}$
is a finite disjoint union of projective spaces.
(Finiteness also follows from the fact
there are finitely many solutions for $\eqref{eq:num-rat}$
with fixed $j$.)
By Lemma \ref{lem:curves}(i), these projective spaces
have dimension $j-1$.

Thus we have shown that $\HH^j(X)$ is a smooth
scheme of finite type
over $k$ isomorphic over $\ol{k}$ to a finite
disjoint union of projective spaces.
Let $\MM^j(X) = \Spec(H^0(\HH^j(X), \OO))$; it is a smooth zero-dimensional
scheme, and $\HH^j(X) \to \MM^j(X)$ is a Severi-Brauer fibration.
Finally, the isomorphism $\MM^j(X) \simeq \MM^{d-j}(X)$
is given by $|C| \mapsto |-K_X - C|$, using Lemma \ref{lem:curves} (iv).

(ii) We claim that the projection $\HH^j(X,Z) \to \MM^j(X)$
is an isomorphism. By Galois descent it is sufficient to verify
this over the algebraic closure. 
Each fiber of $\HH^j(X)_{\bar{k}} \to \MM^j(X)_{\bar{k}}$ is equal to $|L|$ for some line bundle $L$. It follows from the assumption that the linear system defined by $H^0(X_{\bar{k}},L \otimes I_Z) \subset H^0(X_{\bar{k}},L)$ is a point in $|L|$ where $I_Z$ be the ideal sheaf of $Z_{\bar{k}} \subset X_{\bar{k}}$. Hence $\HH^j(X,Z)_{\bar{k}} \to \MM^j(X)_{\bar{k}}$
is an isomorphism.

(iii) is proven by solving~\eqref{eq:manin}.

(iv) Galois descent identifies smooth zero-dimensional scheme with
a set with Galois action. Thus it suffices to verify the numbers
of conic bundles and nets of cubics over the algebraic
closure in each case. 
For ($dP_8$), 
the linear system of quartic rational curves on $\P^1 \times \P^1$ is defined by $\OO(1,1)$, and the two pencils of conics $\OO(1,0)$ and $\OO(0,1)$.
The cases ($dP_6$) and ($dP_5$) follow from (iii). 
\end{proof}

\section{Models of large degree}
\label{sec:geom-rat-5}

In this section we deal with surfaces
admitting the most interesting birational automorphisms.
These are
geometrically rational surfaces with minimal 
models of degree $\ge 5$. At the end of this section
we prove
Theorem \ref{thm:main}
and discuss rationality centers for rational surfaces.

\subsection{Virtual N\'eron-Severi sets}

We introduce a type of surfaces 
$X/k$
which we call
\emph{models of large degree}:
\begin{enumerate}
    \item[($dP_9$)] $\P^2$ or a Severi-Brauer surface 
    \item[($dP_8$)] minimal del Pezzo
    surface of degree $8$; in this case $X_{\ol{k}} \simeq \P^1 \times \P^1$    \item[($C_8$)] smooth conic bundle over a conic;
    in this case $X_{\ol{k}} \simeq \F_n$, 
    and we assume $n \ge 1$ to avoid the overlap with ($dP_8$)
    \item[($dP_6$)] del Pezzo surface of degree $6$ 
    \item[($dP_5$)] del Pezzo surface of degree $5$ 
\end{enumerate}

Note that we do not make any assumption
on the Picard rank of the surfaces in the list above.

\begin{proposition}\label{prop:min-rational-dP}
All minimal geometrically
rational surfaces $X$
with $K_X^2 \ge 5$
and all conic bundles over a curve of genus $0$ with $K_X^2 \ge 5$ 
are among models of large degree.
\end{proposition}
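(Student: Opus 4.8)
The plan is to deduce this from the two-dimensional Minimal Model Program (Theorem~\ref{thm:mmp}) together with the degree bounds for del Pezzo surfaces and for conic bundles. First I would note that a conic bundle over a genus-$0$ curve is by definition a minimal surface, and that it is geometrically rational (its base becomes $\P^1$ and its generic fibre is rational over $\ol{k}$, so $X_{\ol{k}}$ is rational by Tsen's theorem); hence the second assertion is a special case of the first, and it is enough to consider a minimal geometrically rational surface $X$ with $K_X^2\ge5$. Since $X_{\ol{k}}$ is rational, cases (1) and (2) of Theorem~\ref{thm:mmp} are excluded, so $X$ is either (4) a minimal del Pezzo surface of Picard rank one or (3) a conic bundle $\pi\colon X\to C$ over a genus-$0$ curve with $\rk\,\NS(X)=2$; these are mutually exclusive, since a fibration over a curve forces $\rk\,\NS(X)\ge2$.

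In case (4) I would run through the degrees $d=K_X^2\in\{5,6,7,8,9\}$. If $d=9$ then $X_{\ol{k}}\simeq\P^2_{\ol{k}}$, so $X$ is a Severi--Brauer surface (possibly $\P^2$), of type $(dP_9)$. If $d=8$ then $X_{\ol{k}}$ is $\P^1\times\P^1$ or $\F_1$; the latter has a unique $(-1)$-curve, which is therefore Galois-invariant and would give $\rk\,\NS(X)=2$, contradicting $\rk\,\NS(X)=1$, so $X_{\ol{k}}\simeq\P^1\times\P^1$ and $X$ is of type $(dP_8)$. If $d=7$ then $X_{\ol{k}}\simeq\Bl_2\P^2$ has exactly three $(-1)$-curves forming a chain; the middle one --- the strict transform of the line through the two blown-up points --- is the only $(-1)$-curve meeting both others, hence Galois-invariant and contractible over $k$, contradicting minimality; so $d=7$ does not arise. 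For $d=6$ and $d=5$ the surface $X$ is of type $(dP_6)$ and $(dP_5)$ by definition.

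In case (3) I would use \eqref{eq:KX2-conicbundle}: the number $m$ of geometric singular fibres of $\pi$ equals $8-K_X^2$, so $K_X^2\ge5$ forces $m\in\{0,1,2,3\}$, and the claim amounts to $m=0$. If $m\ge1$ then over $\ol{k}$ the surface $X_{\ol{k}}$ is a Hirzebruch surface blown up at $m\le3$ points on distinct fibres; choosing a singular fibre $L\cup L'$ (its two components are $(-1)$-curves) and contracting $L$, respectively $L'$, yields surfaces $Y$, respectively $Y'$, with one fewer singular fibre. A short inspection of these small blow-ups of Hirzebruch surfaces shows that in each case either there is over the relevant base point a $(-1)$-curve that is the unique contractible one there (hence Galois-invariant), or else $Y\not\simeq Y'$ (they are distinguished as Hirzebruch surfaces, or by the presence of a $(-2)$-curve), so that no Galois-semilinear automorphism of $X_{\ol{k}}$ can interchange $L$ and $L'$; either way some $(-1)$-curve, or a Galois-stable set of pairwise disjoint $(-1)$-sections, is defined over $k$ and contractible, contradicting minimality. (Alternatively one can simply invoke Iskovskikh's classification of minimal conic bundles \cite{IskovskikhMinModel}: a minimal conic bundle with $K_X^2\ge5$ has no singular fibre.) Hence $m=0$: $\pi$ is a smooth conic bundle and $X_{\ol{k}}\simeq\F_n$; minimality rules out $n=1$, and then $n=0$ gives type $(dP_8)$ while $n\ge2$ gives type $(C_8)$. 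This covers all cases.

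The part I expect to be the only real obstacle is the conic-bundle step, and specifically the cases $m=2$ and $m=3$: a bare count of $(-1)$-curves does not settle them, because whether the two components of a singular fibre are Galois-conjugate depends on the field, and one must use the finer point that exchanging them would force two a priori different contractions of $X_{\ol{k}}$ to non-isomorphic surfaces. Everything else --- the del Pezzo case and the reduction --- is straightforward bookkeeping with the geometric classification, and if one is content to cite \cite{IskovskikhMinModel} for minimal conic bundles, the whole proof becomes essentially immediate.
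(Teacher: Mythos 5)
Your proposal is correct and follows essentially the same route as the paper: reduce via the two-dimensional MMP (Theorem \ref{thm:mmp}) to the del Pezzo and conic-bundle cases and then appeal to Iskovskikh's classification; the paper simply cites \cite{IskovskikhMinModel} both for the non-minimality of degree-$7$ del Pezzo surfaces and for the conic-bundle case, whereas you additionally supply the standard direct arguments for the former. Your hands-on sketch for conic bundles with $1\le m\le 3$ is, as you yourself flag, not a complete proof as written (one must also analyze how Galois permutes the different singular fibres, not only the two components of a single fibre), but your fallback of invoking \cite[Theorem 5]{IskovskikhMinModel} is exactly what the paper does, so nothing essential is missing.
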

\begin{proof}
By the work of Iskovskikh \cite{IskovskikhMinModel}
explained in Theorem \ref{thm:mmp} (3), (4),
$X$ is a del Pezzo surface of Picard rank one
or a conic bundle of Picard rank two.
Minimal del Pezzo surfaces of degree $K_X^2 \ge 5$ are all among
($dP_9$), ($dP_8$), ($dP_6$), ($dP_5$)
(del Pezzo surface of degree $7$ is not minimal,
see e.g.~\cite[Corollary on p.37]{IskovskikhMinModel}).

Assume that $X$ is
a conic bundle over a curve of genus $0$; using \eqref{eq:KX2-conicbundle}
we have
$5 \le K_X^2 \le 8$.
By \cite[Theorem 5]{IskovskikhMinModel},
$X$ is model of large degree.
\end{proof}

\begin{definition}\label{def:MX}
For each model
of large degree
we define its 
\emph{virtual N\'eron-Severi Galois set}
$A_X \in 
\Burn(G_k) \simeq \Z[\Var^0/k]$
as follows.
In each case, $Z_i$ refers to one of the finite \'etale $k$-schemes of degree $i$ introduced in Proposition \ref{prop:hilbert-scheme}(iv).
\begin{itemize}
    \item[($dP_9$)] $A_X = [\Spec(k)]$
    \item[($dP_8$)] $A_X = [Z_2]$, where $Z_2$ parametrizes the rulings (that is, the conic bundle structures)
    on $X_{\ol{k}}$
    \item[($C_8$)] $A_X =  2[\Spec(k)]$ 
    \item[($dP_6$)] $A_X = [Z_2] + [Z_3] - [\Spec(k)]$, where $Z_3$
    parametrizes three pencils of conics on $X_{\ol{k}}$
    and $Z_2$ parametrizes two families of
    cubics

    \item[($dP_5$)] $A_X = [Z_5]$, where $Z_5$ parametrizes five
    pencils of conics on $X_{\ol{k}}$
\end{itemize}
\end{definition}

Note that in all cases except for ($dP_6$), the virtual N\'eron-Severi set
is realized as a set.
The following Lemma explains the name 
N\'eron-Severi set. See \eqref{eq:mu} for the definition of $\mu_{G_k}$.

\begin{lemma}\label{lem:NS}
If $A_X \in \Burn(G_k)$ is the virtual N\'eron-Severi set of $X$,
and $F$ is any field of characteristic zero, then 
$\mu_{G_k}(A_X) = 
[\NS(X_{\ol{k}}) \otimes F] \in K_0(\Rep(G_k, F))$.
\end{lemma}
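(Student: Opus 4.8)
The plan is to check the identity case by case along the list of Definition~\ref{def:MX}, using throughout that $\mu_{G_k}([Z]) = [F[Z_{\ol{k}}]]$ is the class of the permutation representation attached to the Galois set $Z_{\ol{k}}$. The uniform mechanism is that the \'etale schemes $\MM^j(X)$ of Proposition~\ref{prop:hilbert-scheme} have their $\ol{k}$-points canonically and $G_k$-equivariantly identified with the corresponding sets of distinguished divisor classes in $\NS(X_{\ol{k}})$: rulings for $\P^1 \times \P^1$, pencils of conics, and pencils/nets of cubics. Indeed $\HH^j(X)$, hence $\MM^j(X)$, are intrinsic to $X$, and the map sending a connected component of $\HH^j(X_{\ol{k}})$ to the common numerical class of its members is compatible with the Galois actions on $\HH^j$ and on $\NS$. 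So for each $j$ the rule $[C] \mapsto C$ defines a $G_k$-equivariant linear map from the permutation module $F[\MM^j(X)_{\ol{k}}]$ to $\NS(X_{\ol{k}}) \otimes F$, and all that remains is to compute its kernel and image case by case by a finite calculation over $\ol{k}$.

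For $(dP_9)$ and $(C_8)$ the Galois action on $\NS(X_{\ol{k}})$ is trivial: for $(dP_9)$ because $\NS = \Z H$, and for $(C_8)$ because $X_{\ol{k}} \simeq \F_n$ with $n \ge 1$ has $\NS$ freely generated by the fiber class and the unique negative section, both Galois-invariant. Hence $\NS(X_{\ol{k}}) \otimes F$ is the trivial representation of rank $1$, resp. $2$, which equals $\mu_{G_k}([\Spec(k)])$, resp. $\mu_{G_k}(2[\Spec(k)])$. For $(dP_8)$, $\NS(X_{\ol{k}})$ is freely generated by the two rulings, which $G_k$ permutes exactly as it permutes the two points of $Z_2$, so the map above is a $G_k$-equivariant isomorphism $F[Z_2] \eto \NS(X_{\ol{k}}) \otimes F$. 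For $(dP_5)$, realizing $X_{\ol{k}}$ as $\P^2$ blown up at four points with exceptional classes $E_1, \dots, E_4$ and hyperplane pullback $H$, Proposition~\ref{prop:hilbert-scheme}(iii) gives the five conic classes $H - E_1, \dots, H - E_4$ and $2H - E_1 - E_2 - E_3 - E_4$; a short determinant computation shows these form a $\Q$-basis of $\NS(X_{\ol{k}}) \otimes \Q$, so again the map is a $G_k$-equivariant isomorphism $F[Z_5] \eto \NS(X_{\ol{k}}) \otimes F$.

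The interesting case is $(dP_6)$, where $X_{\ol{k}}$ is $\P^2$ blown up at three points. By Proposition~\ref{prop:hilbert-scheme}(iii) the three conic classes are $H - E_1, H - E_2, H - E_3$ and the two cubic classes are $H$ and $2H - E_1 - E_2 - E_3$; since, for instance, $H - E_1, H - E_2, H - E_3, H$ already form a basis of the $4$-dimensional space $\NS(X_{\ol{k}}) \otimes F$, the map $F[Z_3] \oplus F[Z_2] \to \NS(X_{\ol{k}}) \otimes F$ is surjective with one-dimensional kernel, spanned by the class of the relation $\sum_i (H - E_i) = H + (2H - E_1 - E_2 - E_3) = -K_X$. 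This kernel vector is the difference of the $G_k$-invariant sums over $Z_3$ and over $Z_2$, hence is Galois-invariant, so the kernel is the trivial representation $F$. The resulting short exact sequence $0 \to F \to F[Z_3] \oplus F[Z_2] \to \NS(X_{\ol{k}}) \otimes F \to 0$ of $G_k$-representations then yields $[\NS(X_{\ol{k}}) \otimes F] = [F[Z_3]] + [F[Z_2]] - [F] = \mu_{G_k}(A_X)$ in $K_0(\Rep(G_k, F))$, as desired.

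The main obstacle I anticipate is justifying carefully the $G_k$-equivariant intrinsicality of the identification of $\MM^j(X)_{\ol{k}}$ with sets of divisor classes, i.e. that these distinguished linear systems do not depend on a choice of blow-down $X_{\ol{k}} \to \P^2$, together with pinning down in the $(dP_6)$ case that the kernel is exactly one-dimensional and Galois-trivial; once these points are settled the remaining verifications are short explicit computations in $\NS(X_{\ol{k}})$.
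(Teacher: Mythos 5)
Your proof is correct and takes essentially the same case-by-case route as the paper, reducing each case to Proposition~\ref{prop:hilbert-scheme}(iii) and the observation that the classes enumerated there (together with $K_X$ and the fiber class in case ($C_8$)) span $\NS(X_{\ol{k}}) \otimes F$, with exactly one Galois-invariant relation in the ($dP_6$) case. Your worry about the Galois-equivariance of the identification $\MM^j(X)_{\ol{k}} \leftrightarrow \{\text{divisor classes}\}$ is answered by the fact that $\HH^j(X)$, and hence $\MM^j(X)$, are defined as $k$-schemes in Proposition~\ref{prop:hilbert-scheme}(i), and the map sending a curve to its class in $\NS(X_{\ol{k}})$ is manifestly $G_k$-equivariant; you have only spelled out more explicitly the short exact sequence in the ($dP_6$) case, which the paper leaves as a one-line assertion.
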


\begin{proof}
\begin{itemize}
    \item[($dP_9$)] $A_X$ is a set consisting of one element
    and $\NS(X_{\ol{k}})$ is the one-dimensional trivial $G_k$-representation.
    \item[($dP_8$)] $A_X$ is the set of rulings on $X$, and
    $\NS(X_{\ol{k}})$ is freely generated by these rulings.
    \item[($C_8$)] $A_X$ is a set with two elements
    and trivial action.
    Since $K_X$ 
    and the class of a fiber form a basis of
    $\NS(X) \otimes F$ and $\rk \, \NS(X_{\ol{k}}) = \rk \, \NS(\F_n) = 2$, 
    necessarily $\NS(X_{\ol{k}})$
    is of rank two with trivial Galois action.
    \item[($dP_6$)] By Proposition \ref{prop:hilbert-scheme} (iii), $\NS(X_{\ol{k}}) \otimes F$ 
    is generated by three classes of 
    conics and two classes of cubics, 
    modulo a one-dimensional space
    with
    a trivial $G_k$-action;
    this implies the result.
    \item[($dP_5$)]  By Proposition \ref{prop:hilbert-scheme} (iii), $\NS(X_{\ol{k}}) \otimes F$ is
    freely generated by
    classes of conics $A_X$.
\end{itemize}
\end{proof}

It follows from Lemma \ref{lem:NS} that the rank 
of $\NS(X)$ equals the number of orbits of
the virtual N\'eron-Severi set
$A_X$.
In particular, $X$ has Picard rank one if and only
$A_X$ has only one (virtual) orbit.
Since $\mu_{G_k}$
is not injective, we can not simply define
the virtual Galois set of a surface using
$\mu_{G_k}(A_X) = [\NS(X_{\ol{k}}) \otimes F]$,
see Example \ref{ex:ex-cubics} for an illustration; this is why we define them 
case by case in Definition \ref{def:MX}.

\begin{remark}
The Galois sets
forming $A_X$ naturally appear 
in the Chow motive of $X$
\cite[(9) and the following Remark]{Gille}
and as 
semiorthogonal components in the derived category
of the corresponding surface
\cite{BlunkSierraSmith}, 
\cite[Propositions 9.8, 10.1]{AuelBernardara}.
Furthermore, singular versions
of those $k$-algebras show up in the study
of the derived
categories of the corresponding singular
del Pezzo surfaces 
\cite{Kuznetsov-sextics},
\cite{KarmazynKuznetsovShinder},
\cite{Xie-delPezzo}.
\end{remark}

\begin{proposition}\label{prop:geom-rat-5}
Let $X$ be a model of large degree,
and let $\phi: X \dashrightarrow X'$
be a birational isomorphism to another 
minimal surface; then $X'$ is also a model
of large degree, and
\begin{equation}\label{eq:c-MM}
c(\phi) = A_{X'} - A_X.
\end{equation}
\end{proposition}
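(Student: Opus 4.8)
The strategy is to invoke the classification of Sarkisov links between minimal geometrically rational surfaces (Iskovskikh, \cite{IskovskikhFact}) and reduce the statement to a finite, case-by-case verification. Since $X$ is a model of large degree, so $K_X^2 \ge 5$, and by Proposition~\ref{prop:min-rational-dP} a minimal surface birational to $X$ of degree $\ge 5$ is again a model of large degree; the first thing to observe is that every link issuing from a model of large degree either lands in a model of large degree again, or lands in a minimal surface of degree $\le 4$. I would handle the latter possibility first: a link from a degree-$5$ del Pezzo surface down to a degree-$4$ surface is a type~II link blowing up a point, and once we are in degree $\le 4$, Proposition~\ref{prop:c-nongeomrat} applies and shows $c$ behaves as expected; combining this with the fact that one must come back up to degree $\ge 5$ to return to $X'$ forces consistency. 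So the core of the argument is about links connecting two models of large degree.

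**Reduction to a single link.** By \cite{IskovskikhFact} every birational isomorphism $\phi : X \dashrightarrow X'$ between minimal geometrically rational surfaces decomposes as a composition of Sarkisov links, and using the additivity of $c$ (Lemma~\ref{lem:c-add}) together with the additivity of $A_{(-)}$ along such a chain, it suffices to prove \eqref{eq:c-MM} for a \emph{single} link $\nu : Y \dashrightarrow Y'$ between models of large degree (and to separately check that the intermediate surfaces in a chain are themselves models of large degree, or else that the contribution of an excursion into degree $\le 4$ is correctly accounted by Proposition~\ref{prop:c-nongeomrat} — here the identity $A_X = \rk\,\NS(X)\cdot[\Spec k]$ for those surfaces, implicit in the definition, makes the two invariants agree). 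Type~IIC links contribute $c = 0$ and preserve $K^2$ and $\rk\,\NS$ by Lemma~\ref{lem:cIIC=0}; they also preserve $A_X = 2[\Spec k]$ in case ($C_8$), so these are immediate. Type~IV links are isomorphisms, with $c = 0$, and one checks $A_X$ is unchanged. That leaves type~I/III links (blowing up or down a rational point, relating $\P^2$ or a Severi--Brauer surface, or $\F_n$'s) and the type~II links among ($dP_9$), ($dP_8$), ($C_8$), ($dP_6$), ($dP_5$).

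**The case-by-case verification.** For each remaining link $\nu$ the recipe is the same: identify the blow-up center $W$ and blow-down center $W'$ explicitly (these are the degree-$i$ étale schemes appearing in Iskovskikh's list), so that $c(\nu) = [W] - [W']$; then compute $A_{Y'} - A_Y$ directly from Definition~\ref{def:MX} using the description of the Néron--Severi set of $Y$ and $Y'$ in terms of pencils of conics, nets of cubics, and rulings (Proposition~\ref{prop:hilbert-scheme}(iv)); and check the two agree in $\Burn(G_k)$. Concretely, for a type~I link $\P^2 \dashrightarrow C_8$ one blows up a point and the new ruling class on $\F_n$ accounts for the change; for the link ($dP_8$)$\dashrightarrow$($dP_6$) blowing up a degree-$2$ orbit one reads off how the two rulings on $X_{\ol k} \simeq \P^1\times\P^1$ and the exceptional orbit reorganise into the three pencils of conics plus two families of cubics on $dP_6$; and so on through ($dP_5$). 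In each instance the Galois actions on the relevant linear systems of low-degree rational curves determine the étale-scheme structure, and matching them is a finite check. The main obstacle, and the technical heart of the proof, is precisely this last bookkeeping for the ($dP_6$) and ($dP_5$) cases: one must track, compatibly with the Galois action, how pencils of conics and nets of twisted cubics transform under the elementary links, since $A_X$ is defined through exactly these linear systems — the numerical data ($K^2$, $\rk\,\NS$) alone is insufficient (this is the content of Example~\ref{ex:ex-cubics}), so the verification genuinely requires the geometry of Proposition~\ref{prop:hilbert-scheme} rather than lattice theory.
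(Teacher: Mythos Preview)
Your reduction to a single Sarkisov link and the case split by link type is exactly the skeleton of the paper's proof. Two substantive differences deserve comment.

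First, the paper does \emph{not} carry out a full direct verification for each link. Instead it uses the Gassmann machinery: Lemma~\ref{lem:mu=} shows that $\alpha := (A_X + [Z]) - (A_{X'} + [Z'])$ always lies in $\Ker(\mu_{G_k})$, and then Corollary~\ref{cor:Gassmann} says $\alpha = 0$ provided $|\alpha| \le 5$. This dispatches at one stroke all links where the intermediate surface $Y$ has $K_Y^2 \ge 5$ and all links with isomorphic centers (Lemma~\ref{lem:c-link}), leaving only the handful of type~IID links with $K_Y^2 \in \{3,4\}$ to be done by hand via Proposition~\ref{prop:hilbert-scheme}. Your plan to verify every link directly is not wrong, but it is substantially more work, and for at least one case it is not clear it succeeds without Gassmann: in the $6 \leftarrow 3 \to 6$ link the paper identifies the blow-up/down centers as $Z_3'$ and $Z_3$ geometrically, but the remaining equality $Z_2 \simeq Z_2'$ (the schemes of cubic nets on $X$ and $X'$) is obtained \emph{only} through Lemma~\ref{lem:mu=} and Proposition~\ref{prop:Gassmann}, not by a direct geometric argument.

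Second, two points in your outline are off. Your worry about links landing in a minimal surface of degree $\le 4$ is misplaced: Iskovskikh's classification (and Proposition~\ref{prop:c-nongeomrat}(2)) shows that a type~IID link from a surface with $K_X^2 \ge 5$ always has $K_{X'}^2 \ge 5$, and the only type~I/III links are $9 \leftarrow 8$, $9 \leftarrow 5$, $8 \leftarrow 6$; so $X'$ is automatically again a model of large degree and no ``excursion'' occurs (in particular there is no definition of $A_X$ for degree $\le 4$ surfaces --- your parenthetical about $A_X = \rk\,\NS(X)\cdot[\Spec k]$ there has no referent). What \emph{does} happen is that the intermediate surface $Y$ in a type~IID link can have $K_Y^2 = 1$ or $2$: these are the Bertini and Geiser involutions, which you do not mention. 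The paper handles them by observing that $c$ takes values in a torsion-free group and hence vanishes on involutions; without this you would need to show directly that the blow-up and blow-down centers of a Bertini or Geiser link are isomorphic $G_k$-sets, which is not obvious.
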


First we prove that if we apply $\mu_{G_k}$ to both sides of~\eqref{eq:c-MM}, we have equality.

\begin{lem}\label{lem:mu=}
Let $X$, $X'$ be models of large degree.
Let $\phi: X \leftarrow Y \to X'$
be a composition of a blow up and a blow down 
(the centers of the blow ups can be disconnected, or empty).
We have 
$$\mu_{G_k}(A_X') - \mu_{G_k}(A_X) = \mu_{G_k}(c(\phi)).$$
\end{lem}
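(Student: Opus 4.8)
The plan is to translate both sides of the asserted identity into statements about the N\'eron--Severi lattices of $X$, $X'$ and $Y$, and then compare. First I would apply Lemma~\ref{lem:NS} to rewrite the left-hand side: since $A_X$ and $A_{X'}$ are the virtual N\'eron--Severi sets of $X$ and $X'$, one has $\mu_{G_k}(A_X) = [\NS(X_{\ol{k}}) \otimes F]$ and $\mu_{G_k}(A_{X'}) = [\NS(X'_{\ol{k}}) \otimes F]$ in $K_0(\Rep(G_k, F))$, so the left-hand side equals $[\NS(X'_{\ol{k}}) \otimes F] - [\NS(X_{\ol{k}}) \otimes F]$.

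Next I would unwind the right-hand side. Write $\phi$ as $X \xleftarrow{\alpha} Y \xrightarrow{\beta} X'$ with $\alpha$ the blow up of a smooth zero-dimensional subscheme $Z \subset X$ and $\beta$ the blow up of a smooth zero-dimensional subscheme $Z' \subset X'$. By the definition of $c$ we have $c(\phi) = [Z] - [Z']$ in $\Z[\Var^0/k] \simeq \Burn(G_k)$, hence $\mu_{G_k}(c(\phi)) = [F[Z_{\ol{k}}]] - [F[Z'_{\ol{k}}]]$ is the difference of the two permutation representations attached to the $G_k$-sets $Z_{\ol{k}}$ and $Z'_{\ol{k}}$.

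The geometric input that links the two computations is the behaviour of the N\'eron--Severi group under a blow up at a smooth zero-dimensional center. Over $\ol{k}$ the subscheme $Z_{\ol{k}}$ is a finite reduced set of points, blowing them up produces one exceptional curve $E_p$ per point $p \in Z_{\ol{k}}$, and the $G_k$-action permutes the classes $[E_p]$ exactly as it permutes the points of $Z_{\ol{k}}$. Thus there is a $G_k$-equivariant short exact sequence
\[
0 \to \Z[Z_{\ol{k}}] \to \NS(Y_{\ol{k}}) \xrightarrow{\alpha_*} \NS(X_{\ol{k}}) \to 0,
\]
split by $\alpha^*$, and likewise with $(X', Z', \beta)$ in place of $(X, Z, \alpha)$. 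Tensoring with $F$ and passing to classes gives
\[
[\NS(Y_{\ol{k}}) \otimes F] = [\NS(X_{\ol{k}}) \otimes F] + [F[Z_{\ol{k}}]] = [\NS(X'_{\ol{k}}) \otimes F] + [F[Z'_{\ol{k}}]],
\]
and rearranging yields $[\NS(X'_{\ol{k}}) \otimes F] - [\NS(X_{\ol{k}}) \otimes F] = [F[Z_{\ol{k}}]] - [F[Z'_{\ol{k}}]]$, which together with the first two steps is exactly the claim.

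There is no serious obstacle here; the only point requiring a little care is that the blow-up centers $Z$, $Z'$ need not be connected and may be empty, but the identification of the exceptional part of $\NS$ with the permutation module $\Z[Z_{\ol{k}}]$ is additive over connected components, so this causes no change. It is also worth stressing that everything takes place at the level of classes in $K_0(\Rep(G_k, F))$, which is why routing the argument through $\mu_{G_k}$ and Lemma~\ref{lem:NS} --- rather than attempting to build Galois-equivariant isomorphisms of the sets $A_X$, $A_{X'}$ directly --- is the efficient approach, and also why the non-injectivity of $\mu_{G_k}$ is irrelevant for this particular statement.
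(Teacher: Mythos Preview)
Your proof is correct and follows essentially the same arc as the paper's: both invoke Lemma~\ref{lem:NS} to identify $\mu_{G_k}(A_X)$ with $[\NS(X_{\ol{k}})\otimes F]$ and then compare via $\NS(Y_{\ol{k}})$ using the blow-up formula. The only real difference is in how the blow-up identity for $\NS$ is obtained: you write down the Galois-equivariant split exact sequence $0 \to \Z[Z_{\ol{k}}] \to \NS(Y_{\ol{k}}) \to \NS(X_{\ol{k}}) \to 0$ directly, whereas the paper first writes the blow-up relation $[X] + \L\cdot[Z] = [Y] = [X'] + \L\cdot[Z']$ in $K_0(\Var/k)$ and then applies the \'etale realization $\mu_\et$ together with Example~\ref{ex:mu-et-X} (specializing to $F = \Q_\ell$). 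Your route is slightly more elementary and works uniformly for any coefficient field $F$; the paper's route keeps the argument inside the Grothendieck-ring framework that the rest of the paper is built around. Substantively the two are the same.
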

\begin{proof}
Let $Z$, $Z'$ be the centers of the two blow ups.
By the blow up formula, we have 
$[X] + \L\cdot {[Z]} = [Y] = [X'] + \L\cdot{[Z']}$
in the Grothendieck ring.
Applying the \'etale realization 
\eqref{eq:mu-et}
to this
equality (cf. Example \ref{ex:mu-et-X}),
and using Lemma \ref{lem:NS} with $F = \Q_\ell$
we get
\begin{equation}\label{eq:AXplusZ}
\mu_{G_k}(A_X) +
[\Q_\ell[Z_{\bar{k}}]] = 
[\NS(Y_{\ol{k}}) \otimes \Q_\ell] = 
\mu_{G_k}(A_{X'}) +
[\Q_\ell[Z'_{\bar{k}}]] 
\in K_0(\Rep(G_k,\Q_\ell)).
\end{equation}
Hence $\mu_{G_k}(A_X') - \mu_{G_k}(A_X) = \mu_{G_k}(c(\phi))$.
\end{proof}

Before proving Proposition~\ref{prop:geom-rat-5}, we establish some particular cases,
which rely on Gassmann
equivalence being trivial for
Galois sets of order $\le 5$.

\begin{lem}\label{lem:c-link}
Let $X$, $X'$ be models of large degree.
Let $\phi: X \leftarrow Y \to X'$
be a composition of a blow up and a blow down.
Assume that either
(1) the two blow ups have isomorphic centers
or (2) $K_Y^2 \ge 5$.
Then 
\eqref{eq:c-MM}
holds for $\phi$.
\end{lem}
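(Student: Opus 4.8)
The plan is to deduce \eqref{eq:c-MM} from Lemma~\ref{lem:mu=} by showing that under either hypothesis the virtual $G_k$-set $c(\phi) - (A_{X'} - A_X)$ is represented by (the difference of) $G_k$-sets of size at most $5$, so that Corollary~\ref{cor:Gassmann} applies. Lemma~\ref{lem:mu=} already tells us that $\mu_{G_k}(c(\phi)) = \mu_{G_k}(A_{X'}) - \mu_{G_k}(A_X)$, i.e.\ that $\alpha := c(\phi) - A_{X'} + A_X \in \Ker(\mu_{G_k})$; the only remaining task is to bound $|\alpha|$.

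\medskip

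First I would handle case (1). If the two blow ups $Y \to X$ and $Y \to X'$ have isomorphic centers $Z \simeq Z'$, then $c(\phi) = [Z] - [Z'] = 0$ in $\Z[\Var^0/k] \simeq \Burn(G_k)$, so $\alpha = A_X - A_{X'}$. From Definition~\ref{def:MX} the virtual Néron-Severi sets of models of large degree are represented by $G_k$-sets of size at most $5$ in every case except ($dP_6$), where $A_X = [Z_2] + [Z_3] - [\Spec(k)]$; writing $A_X = ([Z_2 \sqcup Z_3]) - [\Spec(k)]$ we see $|A_X| \le \max(5,1) = 5$. Hence $|\alpha| = |A_X - A_{X'}| \le 5$ once we know $\alpha \in \Ker(\mu_{G_k})$, which is exactly Lemma~\ref{lem:mu=}; so Corollary~\ref{cor:Gassmann} gives $\alpha = 0$.

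\medskip

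For case (2), $K_Y^2 \ge 5$, the key observation is that $Y$ is then a del Pezzo surface (or conic bundle) with $K_Y^2 \ge 5$, so by Proposition~\ref{prop:min-rational-dP} and the classification, $\NS(Y_{\ol k})$ has rank $10 - K_Y^2 \le 5$ (using \eqref{eq:KX2-conicbundle} in the conic bundle case, or the blow-up description for del Pezzo). By \eqref{eq:AXplusZ} the class $[\NS(Y_{\ol k}) \otimes \Q_\ell] = \mu_{G_k}(A_X) + [\Q_\ell[Z_{\bar k}]]$, and the underlying $G_k$-set here — $A_X$ together with $Z$, i.e.\ the actual set of $(-1)$-curve classes or the relevant linear-system generators on $Y_{\ol k}$ together with the exceptional locus — has size equal to $\rk\NS(Y_{\ol k}) \le 5$ (again interpreting the ($dP_6$) virtual class as a genuine set minus a point, which only lowers the effective size). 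So both sides of \eqref{eq:AXplusZ} are represented by sets of size $\le 5$, hence so is $\alpha = (A_X + Z) - (A_{X'} + Z')$ — more precisely, $|\alpha| \le 5$ — and Corollary~\ref{cor:Gassmann} again forces $\alpha = 0$, i.e.\ \eqref{eq:c-MM}.

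\medskip

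The main obstacle I anticipate is the bookkeeping in the ($dP_6$) case, where $A_X$ is genuinely virtual: one must be careful that $[Z_2] + [Z_3] - [\Spec(k)]$, and likewise the combinations appearing in \eqref{eq:AXplusZ}, can be written as a difference $[P] - [Q]$ with $\max(|P|,|Q|) \le 5$, and that subtracting off $[\Spec(k)]$ genuinely corresponds to splitting off a trivial orbit of $\NS$ (the ample class, or $K_Y$ together with a fibre class in the conic-bundle situation) so that the residual permutation representation really is of the stated small size. Granting that — and granting that whenever $K_Y^2 \ge 5$ all the relevant Néron-Severi sets have at most $5$ orbits — the argument is just an application of Corollary~\ref{cor:Gassmann} to the kernel element produced by Lemma~\ref{lem:mu=}.
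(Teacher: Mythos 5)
Your overall strategy — use Lemma~\ref{lem:mu=} to place $\alpha := c(\phi) - (A_{X'} - A_X)$ in $\Ker(\mu_{G_k})$, bound $|\alpha|\le 5$, and invoke Corollary~\ref{cor:Gassmann} — is precisely the route the paper takes, and your treatment of case (2), including the informal handling of the $(dP_6)$ adjustment, matches the paper's.

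In case (1), however, there is a genuine gap. You argue that $|A_X|\le 5$ and $|A_{X'}|\le 5$, and conclude "hence $|A_X - A_{X'}| \le 5$." That inference is false in general: for virtual sets $\beta = [P]-[Q]$ and $\gamma = [P']-[Q']$, one has $\beta - \gamma = [P\sqcup Q'] - [Q\sqcup P']$, which need not be representable with sets of size $\le \max(|\beta|,|\gamma|)$. Concretely, if $X$ were of type $(dP_6)$, so $A_X$ is a size-$5$ set minus $[\Spec(k)]$, while $X'$ were not, so $A_{X'}$ is a genuine set of size $\le 5$, the naive combination would only give $|\alpha|\le 6$ — exactly the order at which nontrivial Gassmann equivalence first appears (Example~\ref{ex:gassmann}), so Corollary~\ref{cor:Gassmann} would no longer apply. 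The missing step, which the paper supplies, is that $Z\simeq Z'$ forces $\deg Z = \deg Z'$, hence $K_X^2 = K_Y^2 + \deg Z = K_Y^2 + \deg Z' = K_{X'}^2$. Thus either neither of $X$, $X'$ is of type $(dP_6)$, in which case $\alpha = [A_X]-[A_{X'}]$ is a difference of honest sets of size $\le 5$, or both are, in which case $\alpha = (A_X + [\Spec(k)]) - (A_{X'} + [\Spec(k)])$ is again a difference of size-$5$ sets. Adding this observation closes the gap and makes your argument coincide with the paper's.
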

\begin{proof}

Let $Z$, $Z'$ be the centers of the two blow ups.
We need to show that 
$$\alpha \cnec (A_X + [Z]) - (A_{X'} + [Z']) = 0.$$
By Lemma~\ref{lem:mu=} and
Corollary \ref{cor:Gassmann},
it is sufficient to show that $|\alpha| \le 5$,
that is
to represent $\alpha$
as a difference of two $G_k$-sets, each having order $\le 5$.
Recall that the virtual $G_k$-set
$A_X$ is represented by a $G_k$-set of order
$\le 5$, except
possibly when $K_X^2 = 6$ in which case
$A_X + [\Spec(k)]$ is a $G_k$-set, of order $5$.

(1) Since $[Z] = [Z']$, we have $\alpha = A_X - A_{X'}$.
If $K_X^2 = K_{X'}^2 \ne 6$, then $|\alpha| \le 5$,
as $|A_X| = |A_{X'}| \le 5$.
If $K_X^2 = K_{X'}^2 = 6$, then $\alpha = (A_X + [\Spec(k)]) - 
(A_{X'} + [\Spec(k)])$ also shows that $|\alpha| \le 5$.

(2) Assume both $K_X^2$, $K_{X'}^2$ are not equal to $6$.
In this case by  \eqref{eq:AXplusZ} both $A_X + [Z]$
and $A_{X'} + [Z']$ have order equal to $\rk \; \NS(X_{\ol{k}}) = 10 - K_Y^2 \le 5$, and the original representation of $\alpha$
shows that $|\alpha| \le 5$.
The cases when $K_X^2 = 6$
or $K_{X'}^2 = 6$ are analogous and are left to the reader.
\end{proof}

\begin{proof}[Proof of Proposition \ref{prop:geom-rat-5}]
By \cite[Theorem 2.5]{IskovskikhFact}, any birational isomorphism between
minimal geometrically
rational 
surfaces is a composition of 
Sarkisov links
explained in Definition \ref{def:links}.
Since $c$ is a homomorphism and sends isomorphisms
to zero, it suffices to prove Proposition \ref{prop:geom-rat-5} for every link of type I, II, or III.

For type I links we write $a \leftarrow b$
for a link $\phi: X \leftarrow X'$
with $K_X^2 = a$, $K_{X'}^2 = b$.
We have the following possibilities
according to \cite[Theorem 2.6(i)]{IskovskikhFact}: $9 \leftarrow 8$,
$9 \leftarrow 5$, $8 \leftarrow 6$. 
Here $X' \to B'$ is a conic bundle of
degree $\ge 5$, hence $X'$ is a model of large 
degree by Proposition \ref{prop:min-rational-dP},
and \eqref{eq:c-MM} follows
from Lemma \ref{lem:c-link}(2) (with one
of the centers empty).
Exactly the same argument proves the claim for
links of type III.

For type IIC links, the result is true
by Lemma \ref{lem:cIIC=0} and Proposition \ref{prop:min-rational-dP}.
For a type IID link $X \leftarrow Y \to X'$, by the first statement of 
Proposition \ref{prop:c-nongeomrat}(2),
$K_X^2 \ge 5$
if and only if $K^2_{X'} \ge 5$. 
Hence
$X$ is a model of large degree if and only if $X'$ is.

It remains to show \eqref{eq:c-MM} for each link of 
type IID.
We write $a \leftarrow d \to b$
for a type IID link $X \leftarrow Y \to X'$ between surfaces
of degree $a$, $d$, $b$.
Since $c$ takes values in a torsion-free
abelian group, it vanishes
on involutions and
in particular
the Bertini and Geiser involutions;
these are
links with $d = 1$ and $d = 2$ respectively
in the list
of links in \cite[Theorem 2.6(ii)]{IskovskikhFact}.
On the other hand, links with $d \ge 5$ are covered
by Lemma \ref{lem:c-link}(2).

Thus we only have to consider links with $d = 3$ or $d = 4$.
\begin{claim}\label{claim-deltaIID}
Let $X \xlto{\phi} Y \xto{\phi'} X'$ be a link of type IID such that $K_Y^2 \ge 3$. Let $Z \subset X$   be the blowup center of $\phi$ and $D \subset X$ the divisor contracted by $\phi' \circ \phi^{-1}$. Then each irreducible component of $D_{\bar{k}}$  is a smooth rational curve of degree $\delta$ (according to Definition~\ref{def-deg}) containing exactly $\delta-1$ points of $Z_{\bar{k}}$. For the type IID links with $K_Y^2 \in \{ 3, 4\}$ listed below,  $\gd$ has the following description:
\begin{itemize}
    \item  $9 \lto 4 \to 5, 9 \leftarrow 3 \rightarrow 9$: $\gd = 6$ (which are conics in $\bP^2$ in the classical sense).
    \item $8 \leftarrow 4 \rightarrow 8$: $\gd = 4$.
    \item $8 \lto 3 \to 5$: $\gd = 6$.
    \item $6 \lto 4 \to 6$: $\gd = 3$.
    \item $6 \lto 3 \to 6$: $\gd = 4$.
    \item $5 \lto 4 \to 9$: $\gd = 2$.
    \item $5 \lto 3 \to 8$: $\gd = 3$.
\end{itemize}
\end{claim}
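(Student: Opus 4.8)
The plan is to prove the two general assertions (smoothness and rationality of the components of $D$, and that each meets $Z$ in exactly $\delta-1$ points) by a blow-up computation on $Y$ that uses the hypothesis $K_Y^2\ge 3$ in an essential way, and then to read off $\delta$ for each of the seven links from Iskovskikh's classification. First I reduce to $\bar k$. A type IID link blows up a \emph{connected} smooth closed point, so both centres $Z\subset X$ and $Z'\subset X'$ are connected over $k$; hence $G_k$ permutes the components of $\mathrm{Exc}(\phi')_{\bar k}$ transitively, and therefore also their images in $X_{\bar k}$, which are precisely the geometric components of $D_{\bar k}$. These components are thus all $G_k$-conjugate, so it suffices to treat a single one of them, and the degree $\delta$ is automatically the same for all.

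Working over $\bar k$, let $E_1,\dots,E_r$ (with $r=K_X^2-K_Y^2$) be the disjoint $(-1)$-curves contracted by $\phi$, fix one of the disjoint $(-1)$-curves contracted by $\phi'$ and call it $\widetilde D$, and set $D:=\phi(\widetilde D)\subset X_{\bar k}$. Then $D$ is a curve rather than a point: if $\widetilde D$ were also contracted by $\phi$ then $\phi'\circ\phi^{-1}$ would extend over a point of $Z$, contradicting the minimality of the link; moreover $\phi$ restricts to a birational morphism $\widetilde D\to D$. Put $p_j:=\phi(E_j)$ and $m_j:=\widetilde D\cdot E_j=\mathrm{mult}_{p_j}(D)\ge 0$, so that $\widetilde D=\phi^{\ast}D-\sum_j m_j E_j$. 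Substituting this into $\widetilde D^2=K_Y\cdot\widetilde D=-1$ and using $K_Y=\phi^{\ast}K_X+\sum_j E_j$ gives
\begin{equation*}
-K_X\cdot D \;=\; 1+\sum_j m_j, \qquad D^2 \;=\; \Bigl(\sum_j m_j^2\Bigr)-1 .
\end{equation*}

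The key point is that $K_Y^2\ge 3$. Since $K_Y^2>0$, the Hodge index theorem on $Y_{\bar k}$ yields $K_Y^2\cdot(\widetilde D+E_j)^2\le\bigl(K_Y\cdot(\widetilde D+E_j)\bigr)^2=4$, that is $2m_j-2=(\widetilde D+E_j)^2\le 4/K_Y^2\le 4/3$, hence $m_j\le 1$ for all $j$. This is the only place the hypothesis is used, and it is precisely why the Geiser and Bertini links ($K_Y^2\in\{1,2\}$) have been excluded above. Since $D$ now has multiplicity at most $1$ at each $p_j$, and $\phi$ is an isomorphism away from the $p_j$, the curve $D$ is smooth, $\phi|_{\widetilde D}$ is an isomorphism, and $D\simeq\mathbb P^1$. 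Writing $\delta:=-K_X\cdot D$, the first identity above becomes $\sum_j m_j=\delta-1$ with every $m_j\in\{0,1\}$; thus $D$ is a smooth rational curve of degree $\delta$ passing through exactly $\delta-1$ of the points of $Z_{\bar k}$, and $D^2=\delta-2$, so $D$ satisfies \eqref{eq:num-rat}. (In fact $\delta-1=\sum_j m_j\le r\le K_X^2-3$, so $\delta$ falls within the range of Lemma \ref{lem:curves}(iii).) This establishes the first two sentences of the claim.

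It remains to compute $\delta$ for the seven links with $K_Y^2\in\{3,4\}$. For this I would use the explicit description of type IID links in \cite[Theorem 2.6(ii)]{IskovskikhFact}: according as $K_X^2\in\{9,8,6,5\}$ one has $X_{\bar k}\simeq\mathbb P^2$, $\mathbb P^1\times\mathbb P^1$, $\Bl_3\mathbb P^2$ or $\Bl_4\mathbb P^2$, the morphism $\phi$ blows up $r=K_X^2-K_Y^2$ geometric points, and $D_{\bar k}$ has $r'=K_{X'}^2-K_Y^2$ components whose strict transforms in $Y_{\bar k}$ are pairwise disjoint $(-1)$-curves. Together with the identities above, and the constraints that the configuration be $G_k$-stable and that $X$, $X'$ have Picard rank one, this pins down the class of $D$ in $\NS(X_{\bar k})$ uniquely: concretely, one solves \eqref{eq:manin} on the relevant del Pezzo surface subject to having $r'$ disjoint strict transforms over $r$ points, i.e.\ one identifies $D$ with a member of one of the low-degree linear systems of rational curves described in Proposition \ref{prop:hilbert-scheme}. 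Carrying this out in each case gives the stated values (for example, $9\leftarrow 4\rightarrow 5$ yields the unique conic of $\mathbb P^2$ through the five blown-up points, so $\delta=6$, and $5\leftarrow 4\rightarrow 9$ yields the five pencils of conics on the quintic del Pezzo surface, each through the single blown-up point, so $\delta=2$). The conceptual core of the argument is the Hodge-index inequality $m_j\le 1$; the main labour is this final case analysis, where the difficulty is bookkeeping rather than any genuine obstruction.
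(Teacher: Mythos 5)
Your proposal is correct, and the overall skeleton matches the paper's: reduce everything to showing that the strict transform $\wt D$ of each component meets each exceptional curve $E_j$ of $\phi$ at most once (equivalently $\mathrm{mult}_{p_j}(D)\le 1$), deduce smoothness and the count of $\delta-1$ points from the identity $-K_X\cdot D = 1+\sum_j m_j$, and then read off $\delta$ link by link from Iskovskikh's Theorem~2.6. Where you differ is in how the key inequality $m_j\le 1$ is obtained. The paper solves \eqref{eq:manin} to list all $(-1)$-classes on $Y_{\bar k}$ (possible because $K_Y^2\ge 3$ forces $r\le 6$) and observes by inspection that any two $(-1)$-classes have intersection number at most $1$. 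You instead apply the Hodge index inequality $K_Y^2\,(\wt D+E_j)^2\le\bigl(K_Y\cdot(\wt D+E_j)\bigr)^2=4$ to get $2m_j-2\le 4/K_Y^2$, hence $m_j\le 1$ exactly when $K_Y^2\ge 3$. Your argument is more conceptual and makes transparent why the Bertini and Geiser cases ($K_Y^2\le 2$) are excluded, whereas the paper's enumeration is more elementary and reuses computations it needs anyway; both genuinely use the hypothesis $K_Y^2\ge 3$. For the determination of $\delta$ in the seven cases, the paper reads $D$ off from the matrix description of the link's action on the Picard--Manin space (e.g.\ $D=-2K_X$ for $6\lto 3\to 6$), while you propose to pin down the class of $D$ by combining the numerical identities with the number of blown-up points and of contracted components from the classification; both amount to citing the same data from \cite[Theorem 2.6(ii)]{IskovskikhFact}, and your level of detail there (two worked examples, the rest asserted) matches the paper's own.
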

\begin{proof}

Since $K_Y^2 \ge 3$, 
$Y_{\bar{k}}$ is obtained by blowing up 
$r \le 6$ points on $\P^2$ with exceptional divisors $E_1,\ldots,E_r$. Let $H \in \NS(Y_{\bar{k}})$ be the pullback of the hyperplane class of $\P^2$. By solving~\eqref{eq:manin}, the $(-1)$-classes on $Y_{\bar{k}}$ are one of the following:
\begin{itemize}
    \item $E_i$, with $i \in \{1,\ldots,r\}$;
    \item $H - E_i - E_j$, with $i , j \in \{1,\ldots,r\}$ and $i \ne j$;
    \item (only when $r = 6$) $-K_Y - H + E_{i}$, with $i \in \{1,\ldots,6\}$.
\end{itemize}
From the above description, any pair of $(-1)$-curves $E$ and $E'$ on $Y_{\bar{k}}$ satisfies $E \cdot E' \le 1$. Since $X_{\bar{k}}$ is a simultaneous contraction of disjoint $(-1)$-curves on $Y_{\bar{k}}$, it follows that each irreducible component $C$ of $D_{\bar{k}}$ is smooth. It also follows that if $C$ has degree $\delta$, then $C$ contains $\delta-1$ points of $Z_{\bar{k}}$.

The value of $\gd$ follows from the matrix description  in~\cite[Theorem 2.6]{IskovskikhFact}
of the action of each link in the classification on the Picard-Manin space. For instance for $6 \lto 3 \to 6$, the description  in~\cite[Theorem 2.6(ii), $K_X^2 = 6$, (c)]{IskovskikhFact} implies that $D = -2K_X$, which shows that $\gd = (-2K_X^2)/3 = 4$. The same argument works for other links.
\end{proof}

We first consider symmetric links where the centers
of the blow up and the blow down are isomorphic,
so that they are covered by Lemma \ref{lem:c-link}(1):
\begin{itemize}
    \item $9 \leftarrow 3 \rightarrow 9$: 
    the first map blows up a Galois orbit
    of six points, and
    the second one 
    contracts  
    the Galois orbit
    of the proper preimages
    of six conics (in the classical sense: 
    they have degree  6 according to Definition~\ref{def-deg}) passing through five of the points by Claim~\ref{claim-deltaIID};
    these two Galois orbits are isomorphic.
    \item $8 \leftarrow 4 \rightarrow 8$: 
    we blow up
    a Galois orbit of four general points 
    on $X$
    and contract the Galois orbit of the proper preimages of 
    quartic curves
    passing through three of
    the four points by Claim~\ref{claim-deltaIID};
    these two Galois orbits are isomorphic.

\end{itemize}

Finally we need to deal with the following links
(we list them up to inverses):

\begin{itemize}

    \item $8 \leftarrow 3 \rightarrow 5$: here
    $X$ is a quadric with $A_X = [Z_2]$
    and $X'$ is 
    a degree five del Pezzo surface
    with
    $A_{X'} = [Z_5']$. 
    We need to show that the first map
    has center $Z_5'$ and the second map has center $Z_2$.
    By Claim~\ref{claim-deltaIID},
    the center of the second map parametrizes 
    smooth rational curves of degree $6$ on $X$ 
    passing through an orbit of five points.
     By Proposition
    \ref{prop:hilbert-scheme}(ii) and (iv)$(dP_8)$,
    the center of the second map
    is a subscheme isomorphic to $Z_2$.
    The center of the first map parametrizes cubics
    passing through an orbit of two points on $X'$, and thus 
    by Proposition \ref{prop:hilbert-scheme}(ii) and (iv)$(dP_5)$ 
    is a scheme
    isomorphic to $Z_5'$.

    \item $9 \leftarrow 4 \rightarrow 5$: it suffices to show that
    if $Z_5$ is the center of the first map and $A_{X'} = [Z_5']$,
    then $Z_5 \simeq Z_5'$.
    One can verify this directly, as in the previous link, or apply 
    Corollary \ref{cor:L-equiv} as in the proof of Lemma \ref{lem:c-link}(2).

    \item $6 \leftarrow 3 \rightarrow 6$: we have
    $A_X = [Z_3] + [Z_2] - 1$
    and $A_{X'} =  [Z_3'] + [Z_2'] - 1$.
    We claim that $c(\phi) = [Z_3'] - [Z_3]$
    and $Z_2 \simeq Z_2'$.
    Indeed, by Claim~\ref{claim-deltaIID}
    the second arrow
    contracts the proper preimages of smooth 
    rational curves
    of degree $4$ passing through
    the Galois orbit of the points blown up by the second arrow,
    and the latter scheme of conics is isomorphic
    to $Z_3$ by Proposition \ref{prop:hilbert-scheme}(ii) and (iv)$(dP_6)$.
    The same argument with roles of $X$, $X'$ reversed
    implies that the first arrow blows up $Z_3'$.
    Thus $c(\phi) = [Z_3'] - [Z_3]$.
    By Lemma~\ref{lem:mu=}, we have $\mu_{G_k}([Z_2]) = \mu_{G_k}([Z'_2])$, hence
    $Z_2 \simeq Z_2'$
    by Proposition
    \ref{prop:Gassmann}. 
    \item $6 \leftarrow 4 \rightarrow 6$: we have
    $Z_3 \simeq Z_3'$, and $c(\phi) = [Z_2'] - [Z_2]$
    similarly to the previous case,
    using Proposition \ref{prop:hilbert-scheme}(ii), (iv)$(dP_6)$,
    and Proposition
    \ref{prop:Gassmann} again.
\end{itemize}
\end{proof}

\begin{proof}[Proof of Theorem \ref{thm:main}]
We first assume that $X$ and $Y$
are geometrically irreducible. Composing with contractions to minimal models,
and using the additivity of $c$ under composition,
we may assume that $X$ and $Y$ are minimal,
hence belong to one of the classes from Theorem
\ref{thm:mmp}.

In the nongeometrically rational case
and geometrically rational case with $K_X^2 \le 4$
the result follows from Proposition \ref{prop:c-nongeomrat}.
Finally, in 
the geometrically rational case with $K_X^2 \ge 5$
the result is Proposition \ref{prop:geom-rat-5}.

In general, that is 
if the surfaces $X$ and $Y$ are not geometrically
irreducible, write $L_X$ and $L_Y$
for the fields of regular functions
of $X$ and $Y$;
these are finite field extensions of $k$.
Then $\phi$ induces a $k$-isomorphism $\sigma: L_X \to L_Y$,
which allows us to consider both $X$ and $Y$ as smooth projective
geometrically irreducible surfaces over $L := L_X$
and $\phi$ becomes a $L$-birational isomorphism.
This way $X$ and $Y$ are restrictions of scalars of geometrically
irreducible surfaces over $L$ (this can be
thought of as the Stein factorizations for
$X$ and $Y$ over $\Spec(k)$), and the result follows
from the geometrically irreducible case
considered above and Proposition \ref{prop:c-Galois}(ii).
\end{proof}

\subsection{Rationality centers}
\label{subsec:rat-cent}
The following
corollary tells us that the rationality center of a
rational surface
$X$ is well-defined, that is
for any sequence of blow ups
and blow downs connecting $\P^2$
to $X$, 
the virtual Galois set of 
blow up centers minus the blow down centers
is independent
of the 
choice of the
birational isomorphism between them.

In the higher-dimensional case such rationality
centers are not well-defined (however, see
Definition 2.3 in \cite{GalkinShinder}
for a similar class in the
localized Grothendieck ring).

\begin{corollary}[of Theorem \ref{thm:main}]
\label{cor:rat-centers}
There exists a unique map
\[
\{ \text{Isomorphism classes of rational smooth projective surfaces} \} \overset{\MM}{\longrightarrow} \Z[\Var^0/k]
\]
with the following properties:
\begin{itemize}
    \item[(1)] We have $\MM(X) = A_X$
    as in Definition \ref{def:MX}
for models of large degree 

\item[(2)] For any birational isomorphism
$\phi: \P^2 \dashrightarrow X$
we have $c(\phi) = [\MM(X)] - 1$
\end{itemize}

\end{corollary}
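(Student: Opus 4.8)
The plan is to define $\MM$ directly in terms of the factorization invariant $c$, and then identify it with $A_X$ on models of large degree via Proposition~\ref{prop:geom-rat-5}.

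First I would fix, for each rational smooth projective surface $X$, a birational isomorphism $\phi_X\colon\P^2\dashrightarrow X$ and set $\MM(X):=c(\phi_X)+[\Spec(k)]\in\Z[\Var^0/k]$. By Theorem~\ref{thm:main} the invariant $c$ is constant on $\Bir(\P^2,X)$, so $\MM(X)$ does not depend on the choice of $\phi_X$, and for \emph{any} $\phi\colon\P^2\dashrightarrow X$ one has $c(\phi)=\MM(X)-[\Spec(k)]$, which is property~(2). Uniqueness is then immediate, since property~(2) forces exactly this value of $\MM(X)$.

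It remains to verify property~(1), i.e. $\MM(X)=A_X$ for a model of large degree $X$ (necessarily rational, since $\MM$ is only defined for rational surfaces). If $X$ is moreover minimal, then $\P^2$ and $X$ are both minimal geometrically rational surfaces, $\P^2$ being the model of large degree $(dP_9)$ with $A_{\P^2}=[\Spec(k)]$, so Proposition~\ref{prop:geom-rat-5} applied to $\phi\colon\P^2\dashrightarrow X$ gives $c(\phi)=A_X-A_{\P^2}=A_X-[\Spec(k)]$ and hence $\MM(X)=A_X$. The only models of large degree that need not be minimal are del Pezzo surfaces of degree $6$ and $5$; for such an $X$ I would choose a contraction $\pi\colon X\to X_{\min}$ to a minimal model, blowing down successive Galois orbits of disjoint $(-1)$-curves with centers $Z_1,\dots,Z_t$. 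As blowing down does not decrease $K^2$, we get $K_{X_{\min}}^2\ge K_X^2\ge5$, so $X_{\min}$ is a minimal geometrically rational surface of degree $\ge 5$, hence again a model of large degree by Proposition~\ref{prop:min-rational-dP}, and $\MM(X_{\min})=A_{X_{\min}}$ by the minimal case just treated. Additivity of $c$ (Lemma~\ref{lem:c-add}) gives $\MM(X)=\sum_i[Z_i]+A_{X_{\min}}$, while Lemma~\ref{lem:mu=} applied to $\pi$ (viewed as the blow-up of $X_{\min}$ along $\bigsqcup_i Z_i$, with the other blow-up in that lemma taken trivial) gives $\mu_{G_k}(A_X)=\mu_{G_k}(A_{X_{\min}})+\sum_i\mu_{G_k}([Z_i])=\mu_{G_k}(\MM(X))$. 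Thus $A_X-\MM(X)\in\Ker(\mu_{G_k})$; since $A_X$ and $\MM(X)$ are both represented by honest $G_k$-sets of order $\le 5$ (adding one copy of $[\Spec(k)]$ to each side in the $dP_6$ case, the orders being controlled by $\rk\NS(X_{\ol k})\in\{4,5\}$), Corollary~\ref{cor:Gassmann} forces $A_X=\MM(X)$.

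I expect the non-minimal case to be the only genuine subtlety: there Proposition~\ref{prop:geom-rat-5} does not apply verbatim, and one must combine the structural fact that the minimal model is still of large degree with the triviality of Gassmann equivalence in order $\le 5$ to promote the equality $\mu_{G_k}(A_X)=\mu_{G_k}(\MM(X))$ to an equality of virtual $G_k$-sets. Everything else is formal bookkeeping on top of Theorem~\ref{thm:main} and Proposition~\ref{prop:geom-rat-5}.
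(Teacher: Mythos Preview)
Your proof is correct and follows the paper's short argument: set $\MM(X):=c(\phi)+1$, invoke Theorem~\ref{thm:main} for well-definedness and uniqueness, and use Proposition~\ref{prop:geom-rat-5} to match $\MM(X)$ with $A_X$ on models of large degree. The paper's proof does exactly this in three sentences, without separating the minimal and non-minimal cases.

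Your explicit treatment of non-minimal rational $dP_5$ and $dP_6$ is in fact more careful than the paper: Proposition~\ref{prop:geom-rat-5} as proved decomposes $\phi$ into Sarkisov links, which presupposes both endpoints minimal, so a one-line citation does not literally cover a non-minimal $X$. Your reduction to $X_{\min}$ together with Lemma~\ref{lem:NS}/Lemma~\ref{lem:mu=} and Corollary~\ref{cor:Gassmann} is the natural patch, and the order-$\le 5$ bookkeeping goes through in every case (for $dP_5$ with $X_{\min}$ a $dP_6$ the single rational blow-down center cancels the $-1$ in $A_{X_{\min}}$, leaving an honest set of order $5$; for $X$ a $dP_6$, adding $[\Spec(k)]$ to both sides gives sets of order $5$).
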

\begin{proof}

For any rational surface
we define $\MM(X)$ as $c(\phi) + 1$; by Theorem \ref{thm:main}
this is independent of the choice of $\phi: \P^2 \dashrightarrow X$. As 
$\cM$ is required to satisfy $[\MM(X)] = c(\phi) + 1$, this shows the uniqueness of $\cM$.
By Proposition \ref{prop:geom-rat-5} this is consistent
with Definition \ref{def:MX}.
\end{proof}

\begin{example}\label{ex:dP5-center}
If $X$ is a del Pezzo surface
of degree $5$, by Definition \ref{def:MX},
$A_X = [Z_5]$. 
Assume that $X$ has Picard rank one which
by Lemma \ref{lem:NS}
is equivalent to $Z_5$ being irreducible.
Then for any rationality
construction of $X$ (see Example \ref{ex:dP5-links}), that is a sequence of
blow ups and blow downs starting with $\P^2$
and ending with $X$, one of the blow ups will
have $Z_5$ as its center.
\end{example}

\begin{example}
Consider the rationality question for a del Pezzo surface $X$ of degree $d \le 4$. If $X$ is rational, then
it is a result of Iskovskikh 
deduced from his classification 
of links
\cite[Theorem 2.6]{IskovskikhFact} 
that such 
a surface $X$ can not be minimal,
thus it admits a Galois orbit of disjoint $(-1)$-curves which
we can contract via some morphism
$\phi: X \to X'$ to obtain a minimal rational
del Pezzo surface
of degree $5$, $6$, $8$ or $9$.
In each case the rationality center $A_X$
is given in the table.
As usual we write $Z_j$ or $Z_j'$ for \'etale schemes
of degree $j$.
\medskip
\begin{center}
\begin{tabular}{|c|c|c|c|}
     \hline
     $\deg(X')$ & $c(\phi^{-1})$ & $A_{X'}$ & $A_X$ \\
     \hline
     $5$ & $[Z_{5-d}]$ & $[Z_5']$ & $ [Z_{5-d}] + [Z_5'] $\\
     \hline
     $6$ & $[Z_{6-d}]$ & $[Z_2']+[Z_3']-1$ & $[Z_{6-d}] + [Z_3'] + [Z_2'] - 1$\\
     \hline
     $8$ & $[Z_{8-d}]$ & $[Z_2']$ & $[Z_{8-d}] + [Z_2']$\\
     \hline
     $9$ & $[Z_{9-d}]$ & $1$ & $[Z_{9-d}] + 1$\\
     \hline
\end{tabular}
\end{center}
\end{example}

\begin{example}\label{ex:cubics-M}
Consider the two rational cubic surfaces $X$, $X'$
introduced in Example \ref{ex:ex-cubics}; they have isomorphic
rational permutation N\'eron-Severi representations, 
and the associated Galois sets can not be read off from them.

However, the construction of $\MM_X$ does determine these Galois sets from $X$ itself,
as by Corollary \ref{cor:rat-centers}
we have 
$[Z] = \MM_X - 1$ and $[Z'] = \MM_{X'} - 1$.
\end{example}

\bibliographystyle{alpha}
\bibliography{Cr2Leq}

\end{document}